\documentclass[11pt,reqno]{amsart}

\marginparwidth1.5cm

\textwidth165mm
\textheight220mm
\hoffset-20mm
\voffset-1mm

\usepackage{amsmath, amsthm, amssymb}
\usepackage{amsfonts}
\usepackage[ansinew]{inputenc}
\usepackage[dvips]{epsfig}
\usepackage[english]{babel}
\usepackage{mathptmx}

%
%

\usepackage{ifthen}

\pagestyle{myheadings}
\markright{Nonlocal operators of small order\hfill}

\usepackage{hyperref}
\hypersetup{
     colorlinks   = true,
		 linkcolor    = black,
     citecolor    = gray
}

\usepackage{cite}
\usepackage{graphicx}
\usepackage{amscd}
\usepackage[dvipsnames]{xcolor}
\usepackage{bm}
\usepackage{enumerate}

\usepackage{verbatim}
\usepackage{hyperref}
\usepackage{amstext}
\usepackage{latexsym}

\let\oldsqrt\sqrt
\def\sqrt{\mathpalette\DHLhksqrt}
\def\DHLhksqrt#1#2{%
	\setbox0=\hbox{$#1\oldsqrt{#2\,}$}\dimen0=\ht0
	\advance\dimen0-0.2\ht0
	\setbox2=\hbox{\vrule height\ht0 depth -\dimen0}%
	{\box0\lower0.4pt\box2}}

\def\vint_#1{\mathchoice%
      {\mathop{\kern 0.2em\vrule width 0.6em height 0.69678ex depth -0.58065ex
              \kern -0.8em \intop}\nolimits_{\kern -0.4em#1}}%
      {\mathop{\kern 0.1em\vrule width 0.5em height 0.69678ex depth -0.60387ex
              \kern -0.6em \intop}\nolimits_{#1}}%
      {\mathop{\kern 0.1em\vrule width 0.5em height 0.69678ex depth -0.60387ex
              \kern -0.6em \intop}\nolimits_{#1}}%
      {\mathop{\kern 0.1em\vrule width 0.5em height 0.69678ex depth -0.60387ex
              \kern -0.6em \intop}\nolimits_{#1}}}
\def\vintslides_#1{\mathchoice%
      {\mathop{\kern 0.1em\vrule width 0.5em height 0.697ex depth -0.581ex
              \kern -0.6em \intop}\nolimits_{\kern -0.4em#1}}%
      {\mathop{\kern 0.1em\vrule width 0.3em height 0.697ex depth -0.604ex
              \kern -0.4em \intop}\nolimits_{#1}}%
      {\mathop{\kern 0.1em\vrule width 0.3em height 0.697ex depth -0.604ex
              \kern -0.4em \intop}\nolimits_{#1}}%
      {\mathop{\kern 0.1em\vrule width 0.3em height 0.697ex depth -0.604ex
              \kern -0.4em \intop}\nolimits_{#1}}}

\newcommand{\aveint}[2]{\mathchoice%
      {\mathop{\kern 0.2em\vrule width 0.6em height 0.69678ex depth -0.58065ex
              \kern -0.8em \intop}\nolimits_{\kern -0.45em#1}^{#2}}%
      {\mathop{\kern 0.1em\vrule width 0.5em height 0.69678ex depth -0.60387ex
              \kern -0.6em \intop}\nolimits_{#1}^{#2}}%
      {\mathop{\kern 0.1em\vrule width 0.5em height 0.69678ex depth -0.60387ex
              \kern -0.6em \intop}\nolimits_{#1}^{#2}}%
      {\mathop{\kern 0.1em\vrule width 0.5em height 0.69678ex depth -0.60387ex
              \kern -0.6em \intop}\nolimits_{#1}^{#2}}}
    

\makeatletter
\def\moverlay{\mathpalette\mov@rlay}
\def\mov@rlay#1#2{\leavevmode\vtop{%
   \baselineskip\z@skip \lineskiplimit-\maxdimen
   \ialign{\hfil$\m@th#1##$\hfil\cr#2\crcr}}}
\newcommand{\charfusion}[3][\mathord]{
    #1{\ifx#1\mathop\vphantom{#2}\fi
        \mathpalette\mov@rlay{#2\cr#3}
      }
    \ifx#1\mathop\expandafter\displaylimits\fi}
\makeatother


\allowdisplaybreaks

\newcommand{\R}{\mathbb{R}} 
\newcommand{\N}{\mathbb{N}} 


\newcommand{\dist}{\textnormal{dist}} 
\newcommand{\diam}{\textnormal{diam}} 
\newcommand{\supp}{\textnormal{supp}} 


\DeclareMathOperator{\essinf}{essinf}

\newcommand{\eps}{\epsilon}
\renewcommand{\phi}{\varphi}

\newcommand{\cD}{{\mathcal D}}
\newcommand{\cE}{{\mathcal E}}

\newcommand{\cH}{{\mathcal H}}

\newcommand{\cL}{{\mathcal L}}

\newcommand{\cV}{{\mathcal V}}

\newcommand{\loglap}{L_{\text{\tiny $\Delta \,$}}}

\theoremstyle{definition}
\newtheorem{defi1}{Definition}[section]
\newtheorem{defi}[defi1]{Definition}

\newtheorem{rem}[defi1]{Remark}

\theoremstyle{plain} 
\newtheorem{thm}[defi1]{Theorem}

\newtheorem{prop}[defi1]{Proposition}

\newtheorem{lemma}[defi1]{Lemma}

\newtheorem{cor}[defi1]{Corollary}

\theoremstyle{definition}

\numberwithin{equation}{section} 

\title{Nonlocal operators of small order}

\author[]
{Pierre Aime Feulefack and Sven Jarohs}

\address{Goethe-Universit\"{a}t Frankfurt, Institut f\"{u}r Mathematik.
	Robert-Mayer-Str. 10, D-60629 Frankfurt, Germany \ and \  African Institute for Mathematical Sciences in Senegal (AIMS Senegal), 
KM2, Route de Joal, B.P. 14 18. Mbour, S\'en\'egal.}

\email{feulefac@math.uni-frankfurt.de, \  \ pierre.a.feulefack@aims-senegal.org}

\address{Goethe-Universit\"{a}t Frankfurt, Institut f\"{u}r Mathematik.
	Robert-Mayer-Str. 10, D-60629 Frankfurt, Germany.}

\email{jarohs@math.uni-frankfurt.de}

\begin{document}

\begin{abstract}
	In this work we study nonlocal operators and corresponding spaces with a focus on operators of order near zero. We investigate the interior regularity of eigenfunctions and of weak solutions to the associated Poisson problem depending on the regularity of the right-hand side. Our method exploits the variational structure of the problem and we prove that eigenfunctions are of class $C^{\infty}$ if the kernel satisfies this property away from its singularity. Similarly in this case, if in the Poisson problem the right-hand is of class $C^{\infty}$, then also any weak solution is of class $C^{\infty}$.
\end{abstract}
\maketitle

{\footnotesize
	\begin{center}
		\textit{Keywords.} Nonlocal operators, regularity, nonlocal function spaces.
	\end{center}
}

\section{Introduction and Main results} 

A crucial role in the investigation of differential operators is the study of eigenfunctions and corresponding eigenvalues, if they exist. In the classical case of the Laplacian $-\Delta$ in a bounded domain $\Omega$ in $\R^N$, it is well known that there exists a sequence of functions $u_n\in H^1_0(\Omega)$, $n\in \N$ and corresponding values $\lambda_{n}>0$ such that 
$$
-\Delta u_n=\lambda_n u_n \quad\text{in $\Omega\quad$ and}\quad u_n=0\quad\text{in $\partial \Omega$.}
$$
Here, $H^1_0(\Omega)$ is as usual the closure of $C^{\infty}_c(\Omega)$ with respect to the norm $u\mapsto \Big(\|u\|_{L^2(\Omega)}^2+\||\nabla u|\|_{L^2(\Omega)}^2\Big)^{\frac12}$.\\
With the well-known De Giorgi iteration in combination with the Sobolev embedding, it follows that $u_n$ must be bounded and by a boot strapping argument using the regularity theory of the Laplacian it follows that $u_n$ is smooth in $\Omega$. In the model case of a nonlocal problem, one usually studies the \textit{fractional Laplacian} $(-\Delta)^s$ with $s\in(0,1)$. This operator can be defined via its Fourier symbol $|\cdot|^{2s}$ and it can be shown that for $\phi\in C^{\infty}_c(\R^N)$ we have
$$
(-\Delta)^s\phi(x)=c_{N,s}p.v.\int_{\R^N}\frac{u(x)-u(y)}{|x-y|^{N+2s}}\ dy=c_{N,s}\lim_{\eps\to 0^+}\int_{\R^N\setminus B_{\epsilon}(x)}\frac{u(x)-u(y)}{|x-y|^{N+2s}}\ dy, \quad x\in \R^N
$$
with a suitable normalization constant $c_{N,s}>0$. As in the above classical case, it can be shown that there exists a sequence of functions $u_n\in \cH^s_0(\Omega)$, $n\in \N$ and corresponding values $\lambda_{s,n}>0$ such that 
$$
(-\Delta)^s u_n=\lambda_{s,n} u_n \quad\text{in $\Omega\quad$ and}\quad u_n=0\quad\text{in $\R^N\setminus\Omega$.}
$$
Here, the space $\cH^s_0(\Omega)$ is given by the closure of $C^{\infty}_c(\Omega)$ ---understood as functions on $\R^N$--- with respect to the norm  $u\mapsto \Big(\|u\|_{L^2(\Omega)}^2+\cE_s(u,u)\Big)^{\frac12}$, where for $u,v\in C^{\infty}_c(\R^N)$ we set
$$
\cE_s(u,v):=\frac{c_{N,s}}{2}\int_{\R^N}\int_{\R^N} \frac{(u(x)-u(y))(v(x)-v(y))}{|x-y|^{N+2s}}\ dxdy.
$$
With similar methods as in the classical case, it follows that $u_n$ is smooth in the interior of $\Omega$.

\medskip

In the following we investigate the above discussion to the case where the \textit{kernel function} $z\mapsto c_{N,s}|z|^{-N-2s}$ is replaced by a measurable function $j:\R^N\to[0,\infty]$ such that for some $\sigma\in(0,2]$ we have
\begin{equation}
\label{assumptionsj}
j(z)=j(-z)\quad\text{for all $z\in \R^N$\quad and} \quad \int_{\R^N}\min\{1,|z|^{\sigma}\}j(z)\ dz<\infty.
\end{equation}
With $\sigma=2$, the above yields that $j\,dz$ is a L\'evy measure and the associated operator to this choice of kernel is of order below $2$. In the following we focus on the case where the singularity of $j$ is not too large, that is on the case $\sigma<1$ so that the associated operator is of order strictly below one. We call the operator in this case also of \textit{small order}.\\
Motivated by applications using nonlocal models, where a small order of the operator captures the optimal efficiency of the model \cite{PV18,AB17}, nonlocal operators with possibly order near zero, i.e. if \eqref{assumptionsj} is satisfied for all $\sigma>0$, have been studied in linear and nonlinear integro-differential equations, see \cite{FKV13,CP17,CW18,FJW20,JSW20,HS2121,F21,TW21} and the references in there. From a stochastic point of view, general classes of nonlocal operators appear as the generator of jump processes, where the jump behavior is modeled through types of \textit{L\'evy measures} and properties of \textit{associated harmonic} functions have been studied, see \cite{M14,G14,KM17,GK18} and the references in there. In particular, operators of the form $\phi(-\Delta)$ for a certain class of functions $\phi$ are of interest from a stochastic and analytic point of view, see e.g.\cite{BJ20,BVW21} and the references in there.\\

\medskip

In the following, we aim at investigating properties of bilinear forms and operators associated to a kernel $j$ satisfying \eqref{assumptionsj} for some $\sigma\in (0,2]$ from a variational point of view. For this, some further assumptions on $j$ are needed in our method and we present certain explicit examples at the end of this introduction, where our results apply.

\medskip

Let $\Omega\subset \R^N$ open, $u,v\in C^{0,1}_c(\Omega)$ understood as functions defined on $\R^N$, and consider the bilinear form
\begin{equation}\label{defi-bilinear}
b_{j,\Omega}(u,v):=\frac{1}{2}\int_{\Omega}\int_{\Omega}(u(x)-u(y))(v(x)-v(y))j(x-y)\ dxdy,\\
\end{equation}
where we also write $b_{j}(u,v):=b_{j,\R^N}(u,v)$ and $b_{j,\Omega}(u):=b_{j,\Omega}(u,u)$, $b_j(u)=b_{j}(u,u)$ resp. We denote 
\begin{equation}
\label{defi-space1}
D^j(\Omega):=\{u\in L^2(\Omega)\;:\; b_{j,\Omega}(u)<\infty\}.
\end{equation}
Associated to $b_j$ there is a \textit{nonlocal} self-adjoint operator $I_j$ which for $u,v\in C^{0,1}_c(\Omega)$ satisfies
	\begin{equation}\label{defi-op}
	b_{j}(u,v)=\int_{\R^N}I_{j}u(x)v(x)\ dx\quad\text{and is represented by}\quad I_{j}u(x)=p.v.\int_{\R^N}(u(x)-u(y))j(x-y)\ dy,\quad x\in \Omega.
	\end{equation}

To investigate the eigenvalue problem for $I_j$, we further need the space
$$
\cD^j(\Omega):=\{u\in D^j(\R^N)\;:\; 1_{\R^N\setminus \Omega} u\equiv 0\}.
$$
Note that clearly $\cD^j(\R^N)=D^j(\R^N)$ and both $D^j(\Omega)$ and $\cD^j(\Omega)$ are Hilbert spaces with scalar products 
$$
\langle \cdot,\cdot\rangle_{D^j(\Omega)}:=\langle \cdot,\cdot \rangle_{L^2(\Omega)}+b_{j,\Omega}(\cdot,\cdot)  \quad \text{and}\quad \langle \cdot,\cdot\rangle_{\cD^j(\Omega)}:=\langle \cdot,\cdot \rangle_{L^2(\Omega)}+b_j(\cdot,\cdot)\quad\text{resp.}
$$	

Our first result concerns the eigenfunctions for the operator $I_j$.
\begin{thm}\label{eigenfunctionsj}
Let \eqref{assumptionsj} hold with $\sigma=2$ and assume $j$ satisfies additionally
\begin{equation}\label{assumption2j}
\int_{\R^N}j(z)\ dz=\infty.
\end{equation}
Let $\Omega\subset \R^N$ be a bounded open set. Then there exists a sequence $u_n\in \cD^j(\Omega)$, $n\in \N$ and values $\lambda_n$ such that
$$
b_j(u_n,v)=\lambda_n\int_{\Omega} u_n(x)v(x)\ dx \quad\text{for all $v\in \cD^j(\Omega)$, $n\in \N$}
\quad\text{and}\quad
0<\lambda_1<\lambda_2\leq \ldots \leq \lambda_n\to \infty\quad\text{for $n\to \infty$.}
$$
Here, we have
\begin{equation}
\lambda_1=\Lambda_1(\Omega):=\inf_{\substack{u\in \cD^j(\Omega)\\ u\neq 0}}\frac{b_j(u)}{\|u\|_{L^2(\Omega)}^2}\label{first-eigenvalue}
\end{equation}
and $u_1$ is unique up to a multiplicative constant ---that is, $\lambda_1$ is simple. Moreover, $u_1$ can be chosen to be positive in $\Omega$. Furthermore, the following statements hold.
\begin{enumerate}
\item  If in addition $j$ satisfies
\begin{equation}\label{assumption3j}
\int_{B_R(0)\setminus B_r(0)} j^2(z)\ dz<\infty\quad\text{for all $0<r<R$,}
\end{equation}
then $u_n\in L^{\infty}(\Omega)$ for every $n\in \N$ and there is $C=C(\Omega,j,n)>0$ such that
$$
\|u_n\|_{L^{\infty}(\Omega)}\leq C\|u_n\|_{L^2(\Omega)}.
$$
\item Assume \eqref{assumptionsj} holds with $\sigma<\frac12$, \eqref{assumption3j} holds, and there is $m\in \N\cup\{\infty\}$ such that the following holds: We have $j\in W^{l,1}(\R^N\setminus B_{\epsilon}(0)$ for every $l\in \N$ with $l\leq 2m$, and there is some constant $C_j>0$ such that
$$
|\nabla j(z)|\leq C_j|z|^{-1-\sigma-N}\quad \text{for all $0<|z|\leq 3$.}
$$
Then $u_n\in H^m_{loc}(\Omega)$. In particular, $u_n\in C^{\infty}(\Omega)$ if $m=\infty$. 
\end{enumerate}
\end{thm}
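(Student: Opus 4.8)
The plan is to localise, strip off the non‑singular part of the kernel, and bootstrap interior Sobolev regularity by a variational (difference–quotient) argument run only on the singular part. By part~(1) we may start from $u:=u_n\in L^\infty(\Omega)\cap\cD^j(\Omega)$, so $u\in L^2(\R^N)$ and $u\equiv0$ on $\R^N\setminus\Omega$, and it suffices to prove $u\in H^k(\Omega')$ for every $\Omega'\Subset\Omega$ and every integer $k\le m$; I would argue by induction on $k$, the case $k=0$ being part~(1). Fix a small $\delta\in(0,1)$ and a cut‑off $\chi\in C^\infty(\R^N)$ that vanishes on $B_{\delta/2}(0)$ and equals $1$ off $B_\delta(0)$, and split $j=j_1+j_2$ with $j_1:=(1-\chi)j$, $j_2:=\chi j$. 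Then $j_1$ is even, supported in $B_\delta(0)$, and — this is the first use of $\sigma<\tfrac12$ — $\int_{\R^N}|z|\,j_1(z)\,dz<\infty$, while the gradient bound yields $\int_{\R^N}|z|^{2}\,|\nabla j_1(z)|\,dz<\infty$ (admissible since $1+\sigma<2$); and $j_2$ is even with $j_2\in W^{l,1}(\R^N)$ for all $l\le 2m$ — by the hypothesis that $j\in W^{l,1}(\R^N\setminus B_\epsilon(0))$ for such $l$ — hence $\|j_2\|_{L^1}<\infty$, $\widehat{j_2}$ decays faster than any polynomial, and (when $m=\infty$) $j_2*w\in C^\infty(\R^N)$ for every compactly supported $w\in L^2(\R^N)$. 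Writing $I_{j_2}w=\|j_2\|_{L^1}w-j_2*w$, the eigenvalue identity becomes, inside $\Omega$,
\[
b_{j_1}(u,v)=\int_{\R^N}g\,v\,dx\quad\text{for all }v\in\cD^{j_1}(\Omega),\qquad g:=(\lambda_n-\|j_2\|_{L^1})\,u+j_2*u ,
\]
and whenever $u\in H^{t}_{loc}(\Omega)$ with $t\le 2m$ one has $g\in H^{t}_{loc}(\Omega)$: the first summand is a multiple of $u$, and $j_2*u\in H^{2m}(\R^N)$ since $j_2\in W^{2m,1}(\R^N)$ and $u\in L^2(\R^N)$ has compact support — convolution with $j_2$ never costs a derivative. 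This is the only place the behaviour of $j$ away from its singularity enters, and the derivative budget $2m$ is exactly what keeps $g$ ahead of $u$ along the iteration.

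The core is an interior gain estimate for $I_{j_1}$: from $b_{j_1}(u,v)=\int g\,v$ with $u\in H^{t}_{loc}(\Omega)\cap L^2(\R^N)$ and $g\in H^{t}_{loc}(\Omega)$ one extracts additional regularity of $u$. For $e\in S^{N-1}$ and small $h\ne0$ put $\delta_h^e w(x)=h^{-1}(w(x+he)-w(x))$. Since $j_1(x-y)$ is translation invariant, $I_{j_1}$ commutes with $\delta_h^e$, giving the exact identity $b_{j_1}(\delta_h^e u,w)=b_{j_1}(u,\delta_{-h}^e w)$. I would take cut‑offs $\eta,\zeta\in C_c^\infty(\Omega)$ with $\zeta\equiv1$ on $\supp\eta$, $\eta\equiv1$ near $\Omega'$, and supports kept at distance $>\delta$ from $\R^N\setminus\Omega$; test with the admissible function $v=\delta_{-h}^{e}(\eta^{2}\delta_h^{e}u)$; use the discrete integration by parts $\int g\,\delta_{-h}^e(\eta^2\delta_h^e u)=\int\delta_h^e g\cdot\eta^2\delta_h^e u$; and combine with the product identity
\[
b_{j_1}(\eta w,\eta w)=b_{j_1}(w,\eta^{2}w)+\tfrac12\iint_{\R^N\times\R^N}w(x)\,w(y)\,(\eta(x)-\eta(y))^{2}\,j_1(x-y)\,dx\,dy
\]
with $w=\delta_h^e u$. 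Estimating $(\eta(x)-\eta(y))^2\le\|\nabla\eta\|_\infty^2|x-y|^2$ on $\supp j_1\subset B_\delta(0)$ and using the weights $\int|z|^2 j_1<\infty$, $\int|z|^2|\nabla j_1|<\infty$ to control the product term and the remainders produced when commuting the localisation with $I_{j_1}$, together with the $L^2_{loc}$‑bounds on $\delta_h^e u$, $\delta_h^e g$ coming from $u,g\in H^{t}_{loc}$, and absorbing into the left the term carrying $\delta_h^e u$ ``on the wrong side'' — after fixing $\delta$ small and using the Poincaré/coercivity inequality $b_{j_1}(w,w)\ge\Lambda_1^{j_1}(\Omega)\|w\|_{L^2}^2$ valid on $\cD^{j_1}(\Omega)$ — one gets $b_{j_1}(\eta\,\delta_h^e u,\eta\,\delta_h^e u)\le C$ uniformly in $h$. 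Hence $\{\eta\,\delta_h^e u\}_h$ is bounded in $\cD^{j_1}(\R^N)$, so $\eta\,\partial_e u\in\cD^{j_1}(\R^N)$; invoking the coercivity and compactness of $\cD^{j_1}(\Omega)$ that the standing hypotheses ($\sigma<\tfrac12$, the gradient bound, \eqref{assumption3j}) provide then improves the regularity of $u$, which one feeds back into $g$ and iterates up to order $m$ (for $m=\infty$, since $j_2*u$ is genuinely $C^\infty$, one reaches $u\in\bigcap_k H^k_{loc}(\Omega)=C^\infty(\Omega)$). For the base step $t=0$, where no a priori $L^2$‑bound on $\delta_h^e u$ is available, one instead shows directly that $b_{j_1}(\eta u,\eta u)<\infty$ from the product identity, the equation and $u\in L^\infty$, and then applies the same coercivity.

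The main obstacle is the passage from ``$\eta\,\delta_h^e u$ bounded in $\cD^{j_1}$'' to a genuine Sobolev improvement of $u$ at each step: because $j$ carries no pointwise lower bound and $\sigma$ may be arbitrarily small, the form $b_{j_1}$ is weak and need not control a fractional Sobolev seminorm of fixed positive order, so one must quantify precisely what coercivity and compactness of $\cD^{j_1}(\Omega)$ the hypotheses do yield, how this interacts with the smoothness of $j_2$ (which is what ultimately drives the $C^\infty$ conclusion), and check the bookkeeping — that the derivative budget $2m$ on $j$, the number of iterations, and the localisation radius $\delta$ can be chosen consistently. Making the Leibniz and commutator remainders truly absorbable is where $\sigma<\tfrac12$ (integrability of the weights $|z|$ and $|z|^2$ against $j_1$ and $\nabla j_1$), the compact support of $j_1$, and the coercivity inequality are all indispensable.
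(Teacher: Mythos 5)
Your overall scaffolding (localise, exploit translation invariance, move differences, use $u\in L^\infty$) is in the right direction, but the mechanism you propose for the regularity gain does not close, and it misses the idea that actually makes the paper's proof work. Concretely: in the inductive step you test with $v=\delta_{-h}^e(\eta^2\delta_h^e u)$, rearrange via the product identity, and want to absorb into $b_{j_1}(\eta\,\delta_h^e u,\eta\,\delta_h^e u)$. The Leibniz remainder
\[
\tfrac12\iint \delta_h^e u(x)\,\delta_h^e u(y)\,(\eta(x)-\eta(y))^2\,j_1(x-y)\,dx\,dy
\]
is bounded, after $(\eta(x)-\eta(y))^2\le C|x-y|^2$, only by $C\|\delta_h^e u\|^2_{L^2(\supp\eta)}\int |z|^2 j_1(z)\,dz$. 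When all you know is $u\in L^\infty$ (the base step $t=0$), $\|\delta_h^e u\|_{L^2}$ is $O(h^{-1})$, so this term is $O(h^{-2})$ and cannot be absorbed; your proposed fix for $t=0$ (``show $b_{j_1}(\eta u,\eta u)<\infty$ directly and apply coercivity'') gives no gain at all, since $b_{j_1}(\eta u,\eta u)<\infty$ is already immediate from $u\in\cD^j(\Omega)$ and localisation, and coercivity of $b_{j_1}$ only returns the $L^2$-bound you started with. Without the first integer step the whole induction never starts.

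What the paper does instead, and what you are missing, is: work with un-normalised differences $\delta_h u=u(\cdot+he)-u$, not difference quotients, and aim only for a fractional (Nikol'skii) gain per pass. The remainder is rewritten by moving $\delta_{-h}$ onto the product $(\eta(x)-\eta(\cdot))j(x-\cdot)$; then $u\in L^\infty$ reduces everything to the scalar bound $\int_{\R^N}k_h(z)\,dz\le C h^{\alpha}$ with $\alpha:=1-\sigma$, where $k_h$ is a pointwise bound on the discrete derivative of $(\eta(x)-\eta(z+x))j(z)$, proved from $\int\min\{1,|z|^\sigma\}j<\infty$ and $|\nabla j(z)|\le C_j|z|^{-1-\sigma-N}$. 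This yields $\|\delta_h u\|_{L^2(\Omega')}\le Ch^\alpha$, i.e. $u\in N^{\alpha,2}_{loc}$ only. The crucial role of $\sigma<\tfrac12$ is then $\alpha>\tfrac12$, so that upon applying the estimate to a second difference one lands in $N^{2\alpha,2}$ with $2\alpha>1$, and Proposition \ref{prop-sobolev-nikolskii} gives $N^{2\alpha,2}\hookrightarrow H^1$; this is how a full derivative is manufactured. Your attribution of $\sigma<\tfrac12$ to integrability of the weights $|z|^2 j_1$ and $|z|^2|\nabla j_1|$ is not where the constraint bites (those integrals converge for all $\sigma<1$). Also, the paper does not split $j=j_1+j_2$; the smoothness of $j$ away from the singularity is used instead to control the localisation errors $g_{\eta,u}$ and their discrete derivatives, which is closer in spirit to what you describe but avoids having to track a second kernel through the form.
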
	

\noindent

For the definition of the Sobolev spaces $W^{l,1}$ and $H^m$ we refer to Section \ref{sobolev}. The first part of Theorem \ref{eigenfunctionsj} indeed follows immediately from the results of \cite{JW19}. To show the boundedness, we emphasize that in our setting, there are no Sobolev embeddings available and thus it is not clear how to implement the approach via the De Giorgi iteration. We circumvent this, by generalizing the $\delta$-decomposition introduced in \cite{FJW20}. The proof of the regularity statement is inspired by the approach used in \cite{C17}, where the author studies regularity of solutions to equations involving nonlocal operators which are in some sense comparable to the fractional Laplacian and uses Nikol'skii spaces. We emphasize that some of our methods generalize to the situation where the operator is not translation invariant and maybe perturbed by a convolution type operator. We treat these in the present work, too, see e.g. Theorem \ref{thm-bdd-full-domain} below. Using a probabilistic and potential theoretic approach, a local smoothness of bounded harmonic solutions solving in a certain very weak sense $I_ju=0$ in $\Omega$, have been obtained in \cite[Theorem 1.7]{GK18} for radial kernel functions using the same regularity as we impose in statement (2) of Theorem \ref{eigenfunctionsj} (see also \cite{M14,G14}). See also \cite{GKL21} for related regularity properties of solutions.\\

\medskip

To present our generalization of the above mentioned $\delta$-decomposition, let us first note that the first equality in \eqref{defi-op} can be extended, see Section \ref{preliminaries}. For this, let $\cV^j(\Omega)$ denote the space of those functions $u:\R^N\to\R$ such that $u|_{\Omega}\in D^j(\Omega)$ and 
\begin{equation}
\sup_{x\in \R^N}\int_{\R^N\setminus B_r(x)} |u(y)|j(x-y)\ dy<\infty\quad\text{for all $r>0$.}
\end{equation}
Given $f\in L^2_{loc}(\Omega)$, we then call $u\in \cV^j(\Omega)$ a \textit{(weak) supersolution} of $I_ju=f$ in $\Omega$, if 
\begin{equation}\label{supersolution}
b_j(u,v)\geq \int_{\Omega} f(x)v(x)\ dx\qquad\text{for all $v\in C^{\infty}_c(\Omega)$.}
\end{equation}
In this situation, we also say that $u$ satisfies in weak sense $I_ju\geq f$ in $\Omega$. Similarly, we define \textit{subsolutions} and \textit{solutions}.\\
We emphasize that this definition of supersolution is larger than the one considered in \cite{JW19}. In the case where $\sigma<1$ in \eqref{assumptionsj} this allows via a density result to extend the weak maximum principles presented in \cite{JW19} as follows.
\begin{prop}[Weak maximum principle]
	\label{wmp}
	Assume \eqref{assumptionsj} is satisfied with $\sigma<1$ and assume that 
	\begin{equation}\label{assumption2a}
	\text{$j$ does not vanish identically on $B_r(0)$ for any $r>0$.}
	\end{equation}
	Let $\Omega\subset \R^N$ open and with Lipschitz boundary, $c\in L^{\infty}_{loc}(\Omega)$, and assume either
	\begin{enumerate}
		\item $c\leq 0$ or 
		\item $\Omega$ and $c$ are such that $\|c^{+}\|_{L^{\infty}(\Omega)}<\inf_{x\in \Omega} \int_{\R^N\setminus \Omega}j(x-y)\ dy$.
	\end{enumerate}
	If $u\in \cV^j(\Omega)$ satisfies in weak sense 
	$$
	I_ju\geq c(x)u\quad\text{ in $\Omega$, $u\geq 0$ almost everywhere in $\R^{N}\setminus \Omega$, and}\quad \liminf\limits_{|x|\to\infty}u(x)\geq 0,
	$$
	then $u\geq 0$ almost everywhere in $\R^N$.
\end{prop}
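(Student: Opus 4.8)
The plan is to argue by contradiction, testing the weak inequality against the negative part of $u$ restricted to $\Omega$. Suppose $u^- := \max\{-u,0\}$ is not identically zero. Since $u \geq 0$ a.e.\ in $\R^N \setminus \Omega$, the function $u^-$ is supported in $\overline\Omega$, and by the liminf condition at infinity together with $u \in \cV^j(\Omega)$ one checks that $u^- \in \cD^j(\Omega)$; here the Lipschitz regularity of $\partial\Omega$ is used to guarantee that $u^-$ can be approximated in the $\cD^j(\Omega)$-norm by functions in $C^\infty_c(\Omega)$, so that $v = u^-$ is an admissible test function after a density argument (this is exactly the extension of \eqref{supersolution} alluded to in the paragraph preceding the statement). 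The point of assumption \eqref{assumption2a} is to ensure that the bilinear form $b_j$ is actually a norm on $\cD^j(\Omega)$ (no nontrivial null directions), so that strict positivity of energy can be exploited.

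Next I would insert $v = u^-$ into the tested inequality $b_j(u,v) \geq \int_\Omega c u\, v$. The key algebraic identity is the pointwise bound
\begin{equation*}
(u(x)-u(y))(u^-(x)-u^-(y)) \leq -(u^-(x)-u^-(y))^2,
\end{equation*}
valid for all $x,y$, which yields $b_j(u,u^-) \leq -b_j(u^-)$. On the right-hand side, on $\Omega$ we have $u\,u^- = -(u^-)^2$, so $\int_\Omega c\, u\, u^- = -\int_\Omega c\, (u^-)^2$. Combining,
\begin{equation*}
b_j(u^-) \leq -b_j(u,u^-) \leq -\int_\Omega c\,(u^-)^2 \leq \int_\Omega c^+ (u^-)^2.
\end{equation*}
In case (1), $c \leq 0$, the right side is $\leq 0$, forcing $b_j(u^-) = 0$, hence $u^- \equiv 0$ by the norm property, a contradiction. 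In case (2) I would additionally need the nonlocal Poincaré-type inequality $b_j(w) \geq \big(\inf_{x\in\Omega}\int_{\R^N\setminus\Omega} j(x-y)\,dy\big)\|w\|_{L^2(\Omega)}^2$ for $w \in \cD^j(\Omega)$, which follows by writing $b_j(w) \geq \int_\Omega\int_{\R^N\setminus\Omega} w(x)^2 j(x-y)\,dy\,dx$ (discarding the part of the double integral over $\Omega\times\Omega$ and using $w \equiv 0$ outside $\Omega$); then $b_j(u^-) \geq \mu \|u^-\|_{L^2(\Omega)}^2$ with $\mu = \inf_x\int_{\R^N\setminus\Omega} j$, while the chain above gives $b_j(u^-) \leq \|c^+\|_{L^\infty(\Omega)}\|u^-\|_{L^2(\Omega)}^2$, and the strict inequality $\|c^+\|_{L^\infty(\Omega)} < \mu$ again forces $u^- \equiv 0$.

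I expect the main obstacle to be the justification that $u^-$ is an admissible test function, i.e.\ that \eqref{supersolution} extends from $v \in C^\infty_c(\Omega)$ to $v \in \cD^j(\Omega)$ when $u \in \cV^j(\Omega)$ only. One must show $b_j(u,\cdot)$ defines a continuous functional on $\cD^j(\Omega)$ — the contribution $\int_\Omega\int_{\R^N\setminus\Omega}(u(x)-u(y))v(x)j(x-y)\,dx\,dy$ is controlled precisely by the $\cV^j(\Omega)$-tail condition combined with $u|_\Omega \in D^j(\Omega)$, and the condition $\sigma < 1$ enters here to make the relevant density of $C^\infty_c(\Omega)$ in $\cD^j(\Omega)$ (in the $b_j$-norm) hold on Lipschitz domains, as remarked in the text. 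Once this density/continuity step is in place, the rest is the short computation above.
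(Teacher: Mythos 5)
There is a genuine gap, and it sits exactly where the paper takes the most care: your claim that $u^-\in\cD^j(\Omega)$. From $u\in\cV^j(\Omega)$ and $u\geq 0$ a.e.\ in $\R^N\setminus\Omega$ one only gets $u^-|_{\Omega}\in D^j(\Omega)$ and $u^-\equiv 0$ outside $\Omega$. Membership in $\cD^j(\Omega)$ requires the \emph{global} energy $b_j(u^-)=b_{j,\Omega}(u^-)+K_{j,\Omega}(u^-)$ to be finite, and the extra tail term $K_{j,\Omega}(u^-)=\int_\Omega (u^-)^2\kappa_{j,\Omega}\,dx$ is a Hardy-type quantity that does \emph{not} follow from the hypotheses: $\kappa_{j,\Omega}$ blows up like $\dist(\cdot,\partial\Omega)^{-\sigma}$ and nothing controls $u^-$ near $\partial\Omega$. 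The paper explicitly flags this obstruction in Remark~\ref{hardy-bem} (``in the general framework presented here, however, it is not clear if this is true'') and in Remark~\ref{rem-hardy-wmp}, which points out that your version of the argument — ending with the Poincar\'e bound $b_j(w)\geq \bigl(\inf_x\kappa_{j,\Omega}(x)\bigr)\|w\|_{L^2}^2$ and culminating in $b_j(u^-)\leq\|c^+\|_\infty\|u^-\|_{L^2}^2$ — goes through \emph{only once} a Hardy inequality (equivalently $D^j(\Omega)=\cD^j(\Omega)$) is established, which is why it is reserved for the fractional case $s\in(0,\tfrac12)$ in Corollary~\ref{wmp-special}. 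Your algebraic identity and the conclusion in both cases are fine \emph{given} $u^-\in\cD^j(\Omega)$, but that premise is the content of an open Hardy inequality here, not a density fact; and the role you assign to \eqref{assumption2a} (that $b_j$ is a norm on $\cD^j(\Omega)$) is not what is actually needed.

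The paper's proof (Proposition~\ref{wmp-b}) is designed to avoid this. It does \emph{not} test against $u^-$. Instead, by Proposition~\ref{density3} it picks a monotone sequence $(v_n)\subset C^\infty_c(\Omega)$ with $0\leq v_n\uparrow u^-$ converging in $D^j(\Omega)$ — density in the \emph{local} space, not in $\cD^j(\Omega)$. Testing the weak inequality against $v_n$ (which requires no extension of the test class at all) and splitting $b_j(u,v_n)=b_{j,\Omega}(u,v_n)+\{\text{tail}\}$, one reaches
\[
0\leq \int_\Omega u^- v_n\bigl(\|c^+\|_{L^\infty}-\kappa_{j,\Omega}\bigr)\,dx - b_{j,\Omega}(u^-,v_n)\leq -b_{j,\Omega}(u^-,v_n),
\]
in which every term is finite because $v_n$ has compact support in $\Omega$ (so $K_{j,\Omega}(u^-,v_n)<\infty$ regardless of any Hardy inequality). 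Letting $n\to\infty$ in the local form gives $b_{j,\Omega}(u^-)=0$, and then Proposition~\ref{constant} (a Liouville-type result using only \eqref{assumption2a} via Lemma~\ref{iterationlemma}) forces $u^-$ to be constant in $\Omega$; the contradiction for a positive constant is then read off from the displayed inequality with $v_n$ still in place. In short: the paper works with $b_{j,\Omega}$ and the monotone truncations so that the Hardy tail is only ever evaluated on compactly supported approximants and never needs to be finite for $u^-$ itself, whereas your proof requires precisely that finiteness and therefore does not close in the generality of Proposition~\ref{wmp}.
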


\begin{rem} 
\begin{enumerate}
\item The Lipschitz boundary assumption on $\Omega$ is a technical assumption for the approximation argument.
\item Note that Assumption \eqref{assumption2a} readily implies the positivity $\Lambda_1(\Omega)$ defined in \eqref{first-eigenvalue}, whenever $\Omega$ is an open set in $\R^N$ which is bounded in one direction, that is $\Omega$ is contained (after a rotation) in a strip $(-a,a)\times \R^{N-1}$ for some $a>0$ (see \cite{FKV13,JW20}).
\end{enumerate}
\end{rem}

Up to our knowledge Proposition \ref{wmp} is even new in the case of $j=|\cdot|^{-N-2s}$, that is, the case of the fractional Laplacian (up to a multiplicative constant). In this situation, it holds $\cV^j(\Omega)=H^s(\Omega)\cap \cL^1_s$, where
$$
\cL^1_s=\left\{ u\in L^1_{loc}(\R^N)\;:\; \int_{\R^N}\frac{|u(x)|}{1+|x|^{N+2s}}\ dy<\infty\right\},
$$
and the proposition can be reformulated as follows.
\begin{cor}\label{wmp-special}
Let $s\in(0,\frac12)$, $\Omega\subset \R^N$ open and with Lipschitz boundary, $c\in L^{\infty}_{loc}(\Omega)$, and assume either
	\begin{enumerate}
		\item $c\leq 0$ or 
		\item $\Omega$ and $c$ are such that $\|c^{+}\|_{L^{\infty}(\Omega)}<\Lambda_1(\Omega)$.
	\end{enumerate}
	If $u\in H^s(\Omega)\cap \cL^1_s$ satisfies in weak sense 
	$$
	(-\Delta)^su\geq c(x)u\quad\text{ in $\Omega$, $u\geq 0$ almost everywhere in $\R^{N}\setminus \Omega$, and}\quad \liminf\limits_{|x|\to\infty}u(x)\geq 0,
	$$
	then $u\geq 0$ almost everywhere in $\R^N$.
\end{cor}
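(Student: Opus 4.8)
The plan is to realize Corollary~\ref{wmp-special} as the specialization of Proposition~\ref{wmp} to the kernel $j(z):=c_{N,s}|z|^{-N-2s}$, for which $I_j=(-\Delta)^s$ and $b_{j,\Omega}$ is, up to the factor $c_{N,s}/2$, the square of the Gagliardo seminorm of $H^s(\Omega)$. First I would check the structural hypotheses. Since $s<\frac12$ one may fix $\sigma\in(2s,1)$, and then $\int_{\R^N}\min\{1,|z|^\sigma\}|z|^{-N-2s}\,dz<\infty$, because near the origin the integrand is $|z|^{\sigma-N-2s}$ with $\sigma-N-2s>-N$, while away from it it is $|z|^{-N-2s}$ with $-N-2s<-N$. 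Hence \eqref{assumptionsj} holds with this $\sigma<1$, and \eqref{assumption2a} is immediate since $j>0$ on $\R^N\setminus\{0\}$.

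Next I would identify the remaining data. For this $j$ one has $D^j(\Omega)=H^s(\Omega)$ with equivalent norms, the tail condition defining $\cV^j(\Omega)$ reduces to $u\in\cL^1_s$ (so that $\cV^j(\Omega)=H^s(\Omega)\cap\cL^1_s$, as recorded before the Corollary), and for the fractional kernel the weak inequality \eqref{supersolution} with $f=cu$ is, by \eqref{defi-op} and the extension discussed before the statement, exactly the usual weak formulation of $(-\Delta)^su\geq c(x)u$ in $\Omega$. Thus the hypotheses of the Corollary are precisely those of Proposition~\ref{wmp} for this $j$ and the given $c$, with the one difference that condition~(2) has been sharpened from $\|c^+\|_{L^\infty(\Omega)}<\inf_{x\in\Omega}\int_{\R^N\setminus\Omega}j(x-y)\,dy$ to $\|c^+\|_{L^\infty(\Omega)}<\Lambda_1(\Omega)$; note that the former is the stronger requirement, since $b_j(w)\geq\int_\Omega w(x)^2\big(\int_{\R^N\setminus\Omega}j(x-y)\,dy\big)\,dx$ for $w\in\cD^j(\Omega)$ gives $\Lambda_1(\Omega)\geq\inf_{x\in\Omega}\int_{\R^N\setminus\Omega}j(x-y)\,dy$.

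Case~(1) is then immediate from Proposition~\ref{wmp}(1). For case~(2) I would rerun the proof of Proposition~\ref{wmp} keeping the sharp constant. Testing \eqref{supersolution} with (an approximation of) $u^-\geq0$: from the pointwise inequality $(u^+(x)-u^+(y))(u^-(x)-u^-(y))\leq0$ one gets $b_j(u,u^-)\leq-b_j(u^-)$, and from $u^+u^-=0$ that $\int_\Omega c\,u\,u^-=-\int_\Omega c\,(u^-)^2\geq-\|c^+\|_{L^\infty(\Omega)}\|u^-\|_{L^2(\Omega)}^2$; combining, $b_j(u^-)\leq\|c^+\|_{L^\infty(\Omega)}\|u^-\|_{L^2(\Omega)}^2$. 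Since $u^-\in\cD^j(\Omega)$ one also has $b_j(u^-)\geq\Lambda_1(\Omega)\|u^-\|_{L^2(\Omega)}^2$, so $\|c^+\|_{L^\infty(\Omega)}<\Lambda_1(\Omega)$ forces $\|u^-\|_{L^2(\Omega)}=0$, i.e. $u\geq0$ a.e. in $\R^N$.

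The main obstacle is to justify the two places where $u^-$ is used as if it were an admissible test function. First, that $u^-\in\cD^j(\Omega)$: it vanishes outside $\Omega$ (because $u\geq0$ there) and $b_{j,\Omega}(u^-)\leq b_{j,\Omega}(u)<\infty$, but the interaction term $\int_\Omega u^-(x)^2\big(\int_{\R^N\setminus\Omega}j(x-y)\,dy\big)\,dx$ is finite only because $\int_{\R^N\setminus\Omega}j(x-y)\,dy\lesssim\dist(x,\partial\Omega)^{-2s}$ together with the fractional Hardy inequality on the Lipschitz domain $\Omega$ --- valid precisely for $s<\frac12$ --- bounding $\int_\Omega u^-(x)^2\dist(x,\partial\Omega)^{-2s}\,dx$ by $\|u^-\|_{H^s(\Omega)}^2$; it is exactly this step that upgrades $\inf_{x\in\Omega}\int_{\R^N\setminus\Omega}j(x-y)\,dy$ to $\Lambda_1(\Omega)$. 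Second, $u^-$ is not compactly supported in $\Omega$, so before testing \eqref{supersolution} one must approximate it strongly in $\cD^j(\Omega)$ by functions in $C^\infty_c(\Omega)$; this is the density result mentioned after \eqref{supersolution}, is the reason for assuming $\partial\Omega$ Lipschitz, and uses $\liminf_{|x|\to\infty}u(x)\geq0$ to control a cutoff at infinity when $\Omega$ is unbounded.
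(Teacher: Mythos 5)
Your proof is correct and follows essentially the same route as the paper: the paper's actual proof is the one-liner ``follows from Remark \ref{rem-hardy-wmp} using Remark \ref{hardy-bem},'' i.e.\ invoke the fractional Hardy inequality of \cite{CS03,D04} (valid for $s\in(0,1)$, $s\neq\frac12$, on Lipschitz domains) to obtain $u^-\in\cD^j(\Omega)$ and $\cD^j(\Omega)=D^j(\Omega)$, then re-run the argument of Proposition \ref{wmp-b} with the sharper bound $b_j(u^-)\geq\Lambda_1(\Omega)\|u^-\|_{L^2}^2$ in place of $\inf_\Omega\kappa_{j,\Omega}$, which is precisely your case-(2) computation. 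The only cosmetic difference is that you work directly with $u^-$ and flag the approximation by $C^\infty_c(\Omega)$ as a separate step, whereas the paper's Remark \ref{rem-hardy-wmp} carries the approximating sequence $(v_n)$ from Theorem \ref{density3} through the chain of inequalities before passing to the limit.
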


Similarly to the extension of the weak maximum principle, we have also the following extension of the strong maximum principle presented in \cite{JW19} in the case $\sigma <1$.

\begin{prop}[Strong maximum principle]
	\label{smp}
	Assume \eqref{assumptionsj} is satisfied with $\sigma<1$ and assume that $j$ satisfies additionally \eqref{assumption2j}. Let $\Omega\subset \R^N$ open and $c\in L^{\infty}_{loc}(\Omega)$ with $\|c^+\|_{L^{\infty}(\Omega)}<\infty$. Moreover, let $u\in \cV^j(\Omega)$, $u\geq 0$ satisfy in weak sense $I_ju\geq c(x) u$ in $\Omega$. Then the following holds.
	\begin{enumerate}
		\item If $\Omega$ is connected, then either $u\equiv 0$ in $\Omega$ or $\essinf_Ku>0$ for any $K\subset\subset \Omega$.
		\item  $j$ given in \eqref{defi-j} satisfies $\essinf_{B_r(0)}j>0$ for any $r>0$, then either $u\equiv 0$ in $\R^N$ or $\essinf_Ku>0$ for any $K\subset\subset \Omega$.
	\end{enumerate}
\end{prop}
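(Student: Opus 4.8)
The plan is to prove the strong maximum principle in Proposition \ref{smp} via the standard two-step strategy familiar from nonlocal problems: first localize the argument to small balls and show that a nonnegative supersolution cannot touch zero on a compact set without being locally trivial, then use a chaining/connectedness argument to propagate positivity across all of $\Omega$. Throughout one works with the weak formulation \eqref{supersolution}, testing against nonnegative $v\in C^\infty_c(\Omega)$, and uses $u\in\cV^j(\Omega)$ so that the nonlocal tail $\int_{\R^N\setminus B_r(x)}|u(y)|j(x-y)\,dy$ is finite and the operator $I_j u$ makes pointwise sense in a weak-averaged way on each ball.

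First I would reduce to the homogeneous inequality: since $u\ge 0$ in $\Omega$ and $\|c^+\|_{L^\infty(\Omega)}<\infty$, the inequality $I_j u\ge c(x)u$ implies $I_j u\ge -\|c^+\|_{L^\infty(K)}u\ge -C_K u$ on each $K\subset\subset\Omega$, and because $j$ has order $\sigma<1$ this lower-order zeroth term can be absorbed: one shows that if $u$ vanishes on a set of positive measure inside a ball $B_{2r}(x_0)\subset\subset\Omega$, then testing \eqref{supersolution} with a suitable nonnegative cutoff supported near the vanishing set forces $\int\int (u(x)-u(y))(v(x)-v(y))j(x-y)\ge -C\int uv$, and exploiting that $v>0$ precisely where $u=0$ produces a contradiction unless $u\equiv 0$ on $B_r(x_0)$. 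The key inequality here is the pointwise observation that at a point $x$ where $u(x)=0$, $I_j u(x)=-\int_{\R^N}u(y)j(x-y)\,dy\le 0$, so any such point where $u$ does not vanish identically in a neighborhood immediately violates $I_j u\ge c(x)u=0$; the assumption \eqref{assumption2j} that $\int_{\R^N}j(z)\,dz=\infty$ together with \eqref{assumption2a}-type nondegeneracy guarantees the integral $\int u(y)j(x-y)\,dy$ is strictly positive whenever $u\not\equiv 0$ near $x$, and it is precisely this infinite-mass condition that makes the order-zero operator "see" the positive values of $u$ arbitrarily close to the touching point. This yields statement (1): on a connected $\Omega$, the set where $u$ vanishes on a neighborhood is open and closed in $\Omega$, hence either empty or all of $\Omega$, giving $\essinf_K u>0$ for every $K\subset\subset\Omega$ or $u\equiv 0$ in $\Omega$.

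For statement (2) I would bootstrap from (1): once $\essinf_K u>0$ for all $K\subset\subset\Omega$ (the alternative $u\equiv 0$ in $\Omega$ being already handled), suppose $u\not\equiv 0$ in $\R^N$ but $u$ vanishes on a positive-measure subset $E$ of $\R^N\setminus\Omega$; pick a point $x\in\Omega$ and note $I_j u(x)$, computed in the weak-averaged sense, contains the term $-\int_{\R^N\setminus\Omega}u(y)j(x-y)\,dy$ which is harmless since $u\ge 0$ there, but the contribution from $y\in\Omega$ near $x$ where $\essinf u>0$ combined with the hypothesis $\essinf_{B_r(0)}j>0$ shows the negative part of $I_j u$ is controlled, while the touching argument applied at a point where $u=0$ in $\R^N\setminus\Omega$—using that such a point is reached from the positivity set in $\Omega$ through a ball on which $j$ is bounded below—forces positivity to leak out, contradicting $u=0$ on $E$. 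More precisely, one iterates the ball-by-ball propagation: positivity on $\Omega$ spreads to a neighborhood of $\Omega$ because $j$ is bounded below near the origin, and then continues to spread, so $u>0$ a.e. on all of $\R^N$.

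The main obstacle I anticipate is the bootstrapping/regularization needed to make the pointwise interpretation of $I_j u$ rigorous for $u\in\cV^j(\Omega)$ that is merely $L^2_{loc}$ with finite energy, not continuous: one must either mollify $u$ and pass to the limit in the weak inequality—checking that mollification preserves supersolutions up to a controllable error, which uses $\sigma<1$ to handle the commutator between convolution and $I_j$—or run the whole argument purely at the level of test functions, choosing $v$ carefully so that $b_j(u,v)$ can be estimated from below by $\int_{\{u=0\}}\big(\int_{\{u>0\}}u(y)j(x-y)\,dy\big)v(x)\,dx$ minus an absorbable remainder. Making this estimate quantitative, and in particular extracting a genuine contradiction rather than just a qualitative obstruction, is where the bulk of the technical work lies; the nondegeneracy assumptions \eqref{assumption2j} in part (1) and $\essinf_{B_r(0)}j>0$ in part (2) are exactly what is needed to keep the relevant lower bound strictly positive.
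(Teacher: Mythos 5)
Your proposal takes a genuinely different route from the paper. The paper's proof of Proposition~\ref{smp} is a one-line reduction to Proposition~\ref{smp-b}, which in turn ``follows by approximation from \cite[Theorem 2.5 and 2.6]{JW19}.'' The novelty in the paper is precisely the \emph{approximation}: the supersolution class $\cV^j(\Omega)$ here is larger than the one used in \cite{JW19}, and the density results of Section~\ref{sec:dense} (valid because $\sigma<1$) show that the two weak formulations agree, so the previously established strong maximum principle transfers. You, in contrast, attempt to reprove the strong maximum principle from scratch via a touching-point and propagation argument. Your overall strategy (argue that $u$ cannot touch zero locally without vanishing near the touching point, then chain small balls by connectedness or by $\essinf_{B_r(0)}j>0$) is indeed the standard one and is essentially the engine inside \cite[Theorems~2.5 and 2.6]{JW19}. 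What the paper's route buys you is economy: none of the delicate pointwise analysis needs to be redone.

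That said, there are gaps and misattributions in the proposal that would need to be fixed to carry it out. First, the role of $\sigma<1$ and of $j\notin L^1(\R^N)$ is conflated. You attribute the absorption of the zeroth-order term $c(x)u$ to the order condition $\sigma<1$; in fact, $\sigma<1$ is what makes the density/approximation machinery work, while what absorbs $c^+$ is \eqref{assumption2j}, $\int_{\R^N}j\,dz=\infty$, via the quantitative statement $\inf_{x\in B_r(x_0)}\kappa_{j,B_r(x_0)}(x)\to\infty$ as $r\to 0$. Without recognizing this scaling, there is no mechanism to handle an arbitrary bounded $c^+$ by shrinking the ball. Second, the remark about ``the commutator between convolution and $I_j$'' is a red herring: $I_j$ is translation invariant, so it commutes with mollification exactly; the real difficulty is that mollifying $u$ does not preserve the inequality $I_j u\ge c(x)u$ when $c$ is nonconstant, and one has to work instead with carefully chosen test functions or a comparison subsolution. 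Third and most importantly, you explicitly flag but do not resolve the central technical issue: making the pointwise identity $I_j u(x)=-\int u(y)j(x-y)\,dy$ at a touching point rigorous when $u$ is only in $\cV^j(\Omega)$. The proposal names the obstacle but does not overcome it, so as written it is an outline rather than a proof. The statement~(2) argument is also muddled in its logical setup (supposing $u$ vanishes on a positive-measure subset of $\R^N\setminus\Omega$ is not the right starting point; what is needed is that if $u\equiv 0$ on some ball inside $\Omega$, the kernel lower bound $\essinf_{B_r(0)}j>0$ forces $u=0$ on a larger set, and one iterates outward), though the intended mechanism is recognizable.
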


Our last results concern the Poisson problem associated to the operator $I_j$.


\begin{thm}\label{poissonj}
Let \eqref{assumptionsj} hold with $\sigma=2$ and assume $j$ satisfies additionally \eqref{assumption2a}. Let $\Omega\subset \R^N$ be a bounded open set. Then for any $f\in L^2(\Omega)$ there is a unique solution $u\in \cD^j(\Omega)$ of $I_ju=f$. Moreover, if $j$ satisfies \eqref{assumption2j} and \eqref{assumption3j} and $f\in L^{\infty}(\Omega)$, then also $u\in L^{\infty}(\Omega)$ and there is $C=C(\Omega,j)>0$ such that
$$
\|u\|_{L^{\infty}(\Omega)}\leq C\|f\|_{L^\infty(\Omega)}.
$$
Additionally, if \eqref{assumptionsj} holds with $\sigma<\frac12$, there is $m\in \N\cup\{\infty\}$ such that $j$ satisfies the assumptions in Theorem \ref{eigenfunctionsj}(2), and it holds $f\in C^{2m}(\overline{\Omega})$, then $u\in H^m_{loc}(\Omega)$. More precisely in this case, for every $\beta\in \N_0^N$, $|\beta|\leq m$ and $\Omega'\subset\subset \Omega$ there is $C=C(\Omega,\Omega',j,\beta)>0$ such that
$$
\|\partial^{\beta}u\|_{L^2(\Omega')}\leq C\|f\|_{C^{2m}(\Omega)}.
$$
In particular, if $m=\infty$ and $f\in C^{\infty}(\Omega)$, then $u\in C^{\infty}(\Omega)$.
\end{thm}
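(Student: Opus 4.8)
\emph{Existence, uniqueness, and an energy estimate.} The plan for the first assertion is a routine application of the Lax--Milgram theorem (equivalently, since $b_j$ is symmetric, of the Riesz representation theorem) on the Hilbert space $\cD^j(\Omega)$: the form $b_j$ is bounded there by Cauchy--Schwarz, $|b_j(u,v)|\le\|u\|_{\cD^j(\Omega)}\|v\|_{\cD^j(\Omega)}$, and it is coercive because \eqref{assumption2a} forces $\Lambda_1(\Omega)>0$ for bounded $\Omega$ (the remark after Proposition~\ref{wmp}), whence $b_j(u)\ge\tfrac{\Lambda_1(\Omega)}{1+\Lambda_1(\Omega)}\|u\|_{\cD^j(\Omega)}^2$; the functional $v\mapsto\int_\Omega fv\,dx$ is bounded on $\cD^j(\Omega)$ as $f\in L^2(\Omega)$. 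This produces a unique $u\in\cD^j(\Omega)$ with $b_j(u,v)=\int_\Omega fv\,dx$ for every $v\in\cD^j(\Omega)$, hence, by density, for every $v\in C^\infty_c(\Omega)$, as in the variational theory of \cite{JW19}. Testing with $v=u$ then yields $b_j(u)^{1/2}\le\Lambda_1(\Omega)^{-1/2}\|f\|_{L^2(\Omega)}$ and $\|u\|_{L^2(\Omega)}\le\Lambda_1(\Omega)^{-1}\|f\|_{L^2(\Omega)}\le\Lambda_1(\Omega)^{-1}|\Omega|^{1/2}\|f\|_{L^\infty(\Omega)}$, which I will feed into the next two steps.

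\emph{Boundedness.} Here I would argue exactly as in the proof of the boundedness in Theorem~\ref{eigenfunctionsj}(1) and Theorem~\ref{thm-bdd-full-domain}, via the generalized $\delta$-decomposition: splitting $j=j_\delta+j^\delta$ with $j_\delta=j\mathbf 1_{B_\delta(0)}$, the equation $I_ju=f$ rewrites on $\Omega$ as $I_{j_\delta}u+\kappa_\delta u=f+j^\delta*u$ with $\kappa_\delta=\int_{\R^N\setminus B_\delta(0)}j\,dz$, and $j^\delta*u\in L^\infty(\Omega)$ with $\|j^\delta*u\|_{L^\infty(\Omega)}\le C_\delta\|u\|_{L^2(\Omega)}$ by \eqref{assumption3j}, since $u$ is supported in the bounded set $\Omega$. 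One then runs a level-set (Stampacchia-type) iteration on $I_{j_\delta}u+\kappa_\delta u=f+j^\delta*u$ using the nonnegativity of $b_{j_\delta}$ together with the zeroth-order term, whose coefficient $\kappa_\delta$ can be taken large because of \eqref{assumption2j}. The outcome is $u\in L^\infty(\Omega)$ with $\|u\|_{L^\infty(\Omega)}\le C(\|f\|_{L^\infty(\Omega)}+\|u\|_{L^2(\Omega)})$, and combining with the energy estimate gives $\|u\|_{L^\infty(\Omega)}\le C\|f\|_{L^\infty(\Omega)}$. The genuine difficulty in this step, flagged already in the introduction, is that no Sobolev embedding is available, so the usual De Giorgi scheme does not apply and one must instead exploit the interplay, as $\delta\to0$, between the growth of $\kappa_\delta$ and that of $C_\delta$.

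\emph{Interior regularity.} Under the stronger hypotheses ($\sigma<\tfrac12$, \eqref{assumption3j}, $j\in W^{l,1}(\R^N\setminus B_\epsilon(0))$ for $l\le2m$, $|\nabla j(z)|\le C_j|z|^{-1-\sigma-N}$ on $0<|z|\le3$, and $f\in C^{2m}(\overline\Omega)$) I would transport the difference-quotient / Nikol'skii-space argument that proves Theorem~\ref{eigenfunctionsj}(2), now with the fixed datum $f$ in place of the eigenfunction right-hand side $\lambda_nu_n$. Concretely: for a unit vector $e$ and small $h>0$, since $I_j$ is translation invariant the second difference quotient of $u$ in direction $e$ solves an equation whose right-hand side is the corresponding difference quotient of $f$ plus commutator terms generated by a cutoff $\eta$ supported near $\Omega'$; testing the weak formulation with $\eta^2$ times that difference quotient and estimating the commutators by the gradient bound on $j$ and the $W^{l,1}$-regularity of $j$ away from the origin (with $\sigma<\tfrac12$ ensuring the relevant kernel integrals are finite) produces an $H^1$-type bound uniform in $h$, hence $u\in H^1_{\mathrm{loc}}(\Omega)$; differentiating the equation and iterating then reaches $H^m_{\mathrm{loc}}(\Omega)$, each upward pass costing two derivatives of $f$, which accounts for the hypothesis $f\in C^{2m}(\overline\Omega)$. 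Keeping track of constants and using $\|u\|_{L^2(\Omega)}\le C\|f\|_{C^{2m}(\Omega)}$ from the first step yields $\|\partial^\beta u\|_{L^2(\Omega')}\le C(\Omega,\Omega',j,\beta)\|f\|_{C^{2m}(\Omega)}$ for $|\beta|\le m$; and for $m=\infty$, $f\in C^\infty(\Omega)$, this gives $u\in H^k_{\mathrm{loc}}(\Omega)$ for all $k\in\N$, hence $u\in C^\infty(\Omega)$ by Sobolev embedding. I expect the main obstacle to be, as for Theorem~\ref{eigenfunctionsj}(2), making the commutator estimates uniform through the localization: since an operator of order $<\tfrac12$ provides essentially no smoothing, the regularity has to be carried by $f$ itself and by the smooth part $j^\delta$ of the kernel through the convolution $j^\delta*u$, rather than gained from any ellipticity.
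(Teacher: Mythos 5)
Your proposal follows essentially the same route as the paper: the paper derives existence/uniqueness from the Poincaré inequality ($\Lambda_1(\Omega)>0$, via Lax--Milgram on $\cD^j(\Omega)$), the $L^\infty$ bound from Theorem~\ref{thm-bdd-full-domain} with $h=0=\lambda$ (the $\delta$-decomposition plus a level truncation), and the interior regularity from Corollary~\ref{thm:differentiability3} (the difference-quotient/Nikol'skii bootstrap). One small calibration: the boundedness step in the paper is not an iteration but a one-shot Stampacchia truncation at an explicit level $t$, with $\delta>0$ fixed once (only needing $c_\delta>\lambda$, automatic here since $\lambda=0$), so there is no genuine $\delta\to0$ interplay to manage.
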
	

Our approach to prove Theorem \ref{eigenfunctionsj} and Theorem \ref{poissonj} is uniformly by considering an equation of the form
$$
I_ju=h\ast u+\lambda u+f \quad\text{in $\Omega$}
$$
for $f\in L^2(\Omega)$, $h\in L^{1}(\R^N)\cap L^2(\R^N)$, and $\lambda\in \R$. Moreover, several of our results need $I_j$ not to be translation invariant and this setup is discussed in Section \ref{preliminaries}.

\subsection{Examples}
We close this introduction with some classes of operators covered by our results.
\begin{enumerate}
\item As introduced in \cite{CW18,JSW20,FJW20} the \textit{logarithmic Laplacian} 
\begin{equation}\label{eq:log-laplace}
\loglap \phi(x)=c_NP.V.\int_{B_1(0)}\frac{\phi(x)-\phi(x+y)}{|y|^{N}}\ dy - c_N\int_{\R^N\setminus B_1(0)}\frac{\phi(x+y)}{|y|^{N}}\ dy+\rho_N\phi(x),  
\end{equation}
appears as the operator with Fourier-symbol $-2\ln(|\cdot|)$ and can be seen as the formal derivative in $s$ of $(-\Delta)^s$ at $s=0$. Here 
\begin{equation}
  \label{eq:def-c-n-rho-n}
c_N=\frac{\Gamma(\frac{N}{2})}{\pi^{N/2}}= \frac{2}{|S^{N-1}|} \qquad \text{and}\qquad \rho_N :=2\ln(2)+\psi(\frac{N}{2}) - \gamma   
\end{equation}  
where $\psi:= \frac{\Gamma'}{\Gamma}$ denotes the digamma function and $\gamma:= -\psi(1)=-\Gamma'(1)$ is the Euler-Mascheroni constant. With $j(z)=c_N1_{B_1(0)}(z)|z|^{-N}$ the operator $\loglap$ can be seen as a bounded perturbation of the operator class discussed in the introduction. The following sections cover in particular this operator.
\item The logarithmic Schr\"odinger operator $(I-\Delta)^{\log}$ as in \cite{F21} is an integro-differential  operator with Fourier-symbol $\log(1+|\cdot|^2)$  and also appears  as the formal derivative in $s$ of the relativistic Schr\"odinger operator $(I-\Delta)^s$ at $s=0$,
\[
(I-\Delta)^{\log}u(x)= d_{N}P.V.\int_{\R^N}\frac{u(x)-u(x+y)}{|y|^{N}}\omega(|y|)\ dy,
\]
 where $d_N=\pi^{-\frac{N}{2}}$, $\omega(r)=2^{1-\frac{N}{2}}r^{\frac{N}{2}}K_{\frac{N}{2}}(r)$ and $K_{\nu}$ is  the modified Bessel function of the second kind with index $\nu$. More generally, operators with symbol $\log(1+|\cdot|^{\beta})$ for some $\beta\in(0,2]$ are studied in \cite{KM14}.
\item Finally, also nonradial kernels of the type considered in \cite{JW19} satisfy in particular the assumptions \eqref{assumptions} and \eqref{assumption2}. See also also \cite{M14,G14,KM14} and references in there. 
\end{enumerate}

The paper is organized as follows. In Section \ref{preliminaries} we collect some general results concerning the spaces used in this paper and resulting definitions of weak sub- and supersolutions. Section \ref{sec:dense} is devoted to show several density results, which then are used to show the Propositions \ref{wmp} and \ref{smp}. In Section \ref{sec:bdd} we present a general approach to show boundedness of solutions and in Section \ref{sec:reg} we give the proof of an interior $H^1$-regularity estimate for solutions from which we then deduce the interior regularity statement as claimed in Theorem \ref{eigenfunctionsj}(2) and Theorem \ref{poissonj}.

\medskip

\paragraph{Notation} In the remainder of the paper, we use the following notation. Let $U,V\subset \R^N$ be nonempty measurable sets, $x \in \R^N$ and $r>0$. We denote by $1_U: \R^N \to \R$ the characteristic function, $|U|$ the Lebesgue measure, and $\diam(U)$ the diameter of $U$. The notation $V \subset \subset U$ means that $\overline V$ is compact and contained in the interior of $U$. The distance between $V$ and $U$ is given by $\dist(V,U):= \inf\{|x-y|\::\: x \in V,\, y \in U\}$. Note that this notation does {\em not} stand for the usual Hausdorff distance.  If $V= \{x\}$ we simply write $\dist(x,U)$. We let $B_r(U):=\{x\in \R^N\;:\; \dist(x,U)<r\}$, so that $B_r(x):=B_r(\{x\})$ is the open ball centered at $x$ with radius $r$. We also put $B:=B_1(0)$ and $\omega_N:=|B|$. Finally, given a function $u: U\to \R$, $U \subset \R^N$, we let $u^+:= \max\{u,0\}$ and $u^-:=-\min\{u,0\}$ denote the positive and negative part of $u$, and we write $\supp\ u$ for the support of $u$ given as the closure in $\R^N$ of the set ${\{ x\in U\;:\; u(x)\neq 0\}}$.\\

\textbf{Acknowledgements.}~ 
This work is supported by DAAD and BMBF (Germany) within the project 57385104. The authors thank Mouhamed Moustapha Fall and Tobias Weth for helpful discussions. We also thank the anonymous referee for valuable suggestions.

\section{Preliminaries}\label{preliminaries}

In the following, we generalize the translation invariant setting of the introduction. For this and from now on, let $k:\R^N\times\R^N\to[0,\infty]$ be a measurable function satisfying for some $\sigma\in(0,2]$
\begin{equation}
\label{assumptions}
\begin{split}
k(x,y)=k(y,x)&\quad\text{for all $x,y\in \R^N$ and }\quad \sup_{x\in \R^N}\int_{\R^N}\min\{1,|x-y|^{\sigma}\}k(x,y)\ dy<\infty.
\end{split}
\end{equation}
We let $j:\R^N\to[0,\infty]$ be the symmetric lower bound of $k$ given by
	\begin{equation}
	\label{defi-j}
	j(z):=\essinf\{k(x,x\pm z)\;:\; x\in \R^N\}\quad\text{for $z\in \R^N$.}
	\end{equation}
	
For $\Omega\subset \R^N$ open let

\begin{align*}
b_{k,\Omega}(u,v)&:=\frac{1}{2}\int_{\Omega}\int_{\Omega}(u(x)-u(y))(v(x)-v(y))k(x,y)\ dxdy,\\
\kappa_{k,\Omega}(x)&:=\int_{\R^N\setminus \Omega}k(x,y)\ dy\in[0,\infty] \quad\text{for $x\in \R^N$, and}\\
K_{k,\Omega}(u,v)&:=\int_{\Omega} u(x)v(x)\kappa_{k,\Omega}(x)\ dx,
\end{align*}

where, if $u=v$ we put
$$
b_{k,\Omega}(u):=b_{k,\Omega}(u,u)\quad\text{and}\quad K_{k,\Omega}(u):=K_{k,\Omega}(u,u)
$$
and we drop the index $\Omega$, if $\Omega=\R^N$. Note that we have for any fixed $x\in \Omega$ that $\kappa_{k,\Omega}(x)<\infty$ by \eqref{assumptions}.
We consider the function spaces
\begin{align*}
D^k(\Omega)&:=\Big\{u\in L^2(\Omega)\;:\; b_{k,\Omega}(u)<\infty\Big\},\\
\cD^k(\Omega)&:=\Big\{u\in D^k(\R^N)\;:\; u=0 \ \text{on $\R^{N}\setminus \Omega$}\Big\},\\
\cV^k(\Omega)&:=\Big\{u:\R^N\to\R\;:\; u|_{\Omega}\in D^k(\Omega)\text{ and, for all $r>0$, }\sup_{x\in \R^N}\int_{\R^N\setminus B_r(x)}|u(y)|k(x,y)\ dy<\infty\Big\}\quad\text{and}\\
\cV_{loc}^k(\Omega)&:=\Big\{u:\R^N\to\R\;:\;  u|_{\Omega'}\in \cV^k(\Omega') \text{ for all $\Omega'\subset\subset\Omega$}\Big\}. 
\end{align*}

\begin{lemma}
\label{properties2}
Let $U\subset \Omega\subset \R^N$ open and $u:\R^N\to\R$. Then the following hold:
\begin{enumerate}
\item $u\in \cD^k(\Omega)\ \Rightarrow\ u|_{\Omega}\in D^k(\Omega)$.
\item $\cD^k(U)\subset \cD^k(\Omega)\subset \cV^k(\Omega)\subset \cV^k(U)\subset \cV^k_{loc}(U)$.
\end{enumerate}
\end{lemma}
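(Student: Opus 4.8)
The plan is to verify the four inclusions in part (2) one at a time, moving from the smallest space to the largest, and noting that the two auxiliary facts we need are essentially monotonicity of the quadratic forms and positivity of the kernel together with finiteness of the tail integrals guaranteed by \eqref{assumptions}.

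First I would treat $\cD^k(U)\subset \cD^k(\Omega)$. If $u\in\cD^k(U)$ then $u\in D^k(\R^N)$ and $u=0$ on $\R^N\setminus U$; since $U\subset\Omega$ we have $\R^N\setminus\Omega\subset\R^N\setminus U$, so $u=0$ on $\R^N\setminus\Omega$ as well, and $u\in D^k(\R^N)$ is unchanged. Hence $u\in\cD^k(\Omega)$. Next, $\cD^k(\Omega)\subset\cV^k(\Omega)$: given $u\in\cD^k(\Omega)$, part (1) gives $u|_\Omega\in D^k(\Omega)$, so the first defining condition of $\cV^k(\Omega)$ holds; for the tail condition, since $u=0$ outside the bounded set $\Omega$ we split $\int_{\R^N\setminus B_r(x)}|u(y)|k(x,y)\,dy$ into the part over $\Omega\setminus B_r(x)$ — where it is controlled using $u\in L^2(\Omega)$, Cauchy--Schwarz, and the fact that on the region $|x-y|\ge r$ one has $k(x,y)\le r^{-\sigma}\min\{1,|x-y|^\sigma\}k(x,y)$, so that the relevant integral of $k(x,\cdot)$ is finite and bounded uniformly in $x$ by \eqref{assumptions} (one should note that if $x$ is far from $\Omega$ this integral is even smaller) — and a part that vanishes because $u\equiv0$ there. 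This yields a bound uniform in $x\in\R^N$, so $u\in\cV^k(\Omega)$. The slightly delicate point here is making the uniform-in-$x$ estimate clean; I expect to phrase it as: for $y\in\Omega$ with $|x-y|\ge r$, $k(x,y)\le\max\{1,r^{-\sigma}\}\min\{1,|x-y|^\sigma\}k(x,y)$, and then integrate, controlling $\|u\|_{L^1(\Omega)}$ by $|\Omega|^{1/2}\|u\|_{L^2(\Omega)}$.

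Then $\cV^k(\Omega)\subset\cV^k(U)$: suppose $u\in\cV^k(\Omega)$. The tail condition $\sup_{x\in\R^N}\int_{\R^N\setminus B_r(x)}|u(y)|k(x,y)\,dy<\infty$ does not reference $\Omega$ and is therefore inherited verbatim. It remains to see $u|_U\in D^k(U)$, i.e. $u\in L^2(U)$ and $b_{k,U}(u)<\infty$; both follow from $u|_\Omega\in D^k(\Omega)$ since $U\subset\Omega$ forces $\|u\|_{L^2(U)}\le\|u\|_{L^2(\Omega)}$ and, because the integrand in $b_{k,\Omega}$ is nonnegative, $b_{k,U}(u)=\frac12\iint_{U\times U}(u(x)-u(y))^2k(x,y)\,dxdy\le b_{k,\Omega}(u)<\infty$. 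Finally $\cV^k(U)\subset\cV^k_{loc}(U)$ is immediate from the definition of $\cV^k_{loc}$: if $u\in\cV^k(U)$ then for every $\Omega'\subset\subset U$ the restriction $u|_{\Omega'}$ satisfies $u|_{\Omega'}\in D^k(\Omega')$ (again by monotonicity of $\|\cdot\|_{L^2}$ and of $b_{k,\cdot}$ on nested domains) and the tail condition, which is domain-independent, so $u\in\cV^k(\Omega')$; since $\Omega'$ was arbitrary, $u\in\cV^k_{loc}(U)$. Part (1) is then just the observation that $\cD^k(\Omega)\subset D^k(\R^N)$ combined with $b_{k,\Omega}(u)\le b_{k,\R^N}(u)=b_k(u)<\infty$ and $\|u\|_{L^2(\Omega)}\le\|u\|_{L^2(\R^N)}$, so $u|_\Omega\in D^k(\Omega)$.

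The main obstacle — really the only non-bookkeeping step — is the uniform tail estimate needed for $\cD^k(\Omega)\subset\cV^k(\Omega)$, where one must leverage boundedness of $\Omega$, the $L^2$-control of $u$, and assumption \eqref{assumptions} simultaneously; everything else is monotonicity of the nonnegative quadratic form and of the $L^2$-norm under passing to subdomains, plus the fact that the tail condition in the definitions of $\cV^k$ and $\cV^k_{loc}$ makes no reference to the underlying open set.
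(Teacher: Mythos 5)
The first, third, and fourth inclusions in part (2), and part (1) itself, are handled correctly: they really do reduce to the observation that $b_{k,\cdot}$ and $\|\cdot\|_{L^2(\cdot)}$ are monotone under restriction of the domain (since the integrand of $b_{k,\Omega}$ is nonnegative), plus the fact that the tail condition in $\cV^k$ and $\cV^k_{\mathrm{loc}}$ does not reference the open set. This matches the spirit of the paper's one-line proof.

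The problem is the step $\cD^k(\Omega)\subset \cV^k(\Omega)$. First, you assume $\Omega$ is bounded (``since $u=0$ outside the bounded set $\Omega$''), but the lemma allows an arbitrary open $\Omega\subset\R^N$, including $\Omega=\R^N$, so this hypothesis has to be either justified or dropped. Second, and more seriously, even for bounded $\Omega$ the estimate you sketch does not close. After writing
$\int_{\Omega\setminus B_r(x)}|u(y)|k(x,y)\,dy
\le \max\{1,r^{-\sigma}\}\int_{\Omega\setminus B_r(x)}|u(y)|\min\{1,|x-y|^\sigma\}k(x,y)\,dy$,
you want to conclude with Cauchy--Schwarz and $u\in L^2(\Omega)$, or alternatively with $\|u\|_{L^1(\Omega)}\le |\Omega|^{1/2}\|u\|_{L^2(\Omega)}$. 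The first route needs a uniform-in-$x$ bound on $\int_{\Omega\setminus B_r(x)}k(x,y)^2\,dy$, which is exactly the extra hypothesis \eqref{assumption2d} of Theorem~\ref{thm-bdd} and is \emph{not} part of \eqref{assumptions}. The second route needs a pointwise bound $k(x,y)\le C(r)$ for $|x-y|\ge r$, which also does not follow from \eqref{assumptions}: the standing assumption only says $\sup_x\int_{\R^N}\min\{1,|x-y|^\sigma\}k(x,y)\,dy<\infty$, i.e.\ an $L^1$ control on $k(x,\cdot)$ off the diagonal, not $L^2$ or $L^\infty$ control. Concretely, take $N=1$, $k(x,y)=j(x-y)$ with $j$ bounded near the origin, $j(z)=\bigl||z|-1\bigr|^{-\beta}$ for $1/2<|z|<3/2$, $j=0$ for $|z|\ge 3/2$, with $\beta\in(0,1)$; and $u(y)=|y|^{-\alpha}\,1_{\{|y|<1/4\}}$ with $\alpha\in(0,1/2)$ and $\alpha+\beta>1$. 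Then $j$ satisfies \eqref{assumptions}, $u\in\cD^j((-1,1))$ (since $j\in L^1$ and $u\in L^2$ give $b_j(u)<\infty$), but $\int_{|y-1|>1/2}|u(y)|\,j(1-y)\,dy=\int_0^{1/4}y^{-\alpha-\beta}\,dy=\infty$, so the tail condition fails at $x=1$. So that inclusion cannot be derived from $u\in L^2(\Omega)$ and \eqref{assumptions} alone, and the argument you gesture at does not work. You flag this step as ``the slightly delicate point''; it is in fact where the proof breaks down, and you would need an additional hypothesis such as $\Omega$ bounded together with \eqref{assumption2d} (as in Section~\ref{sec:bdd}), or a pointwise decay of $k$ off the diagonal, to make it go through.
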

\begin{proof}
This follows immediately from the definitions (see also \cite[Section 3]{JW19}).
\end{proof}

\begin{lemma}[see Proposition 3.3 in \cite{JW19} or Proposition 1.7 in \cite{JW20}]
\label{poincare}
For $\Omega\subset \R^N$ open, let $\Lambda_1(\Omega)$ be given by (c.f. \eqref{first-eigenvalue})
$$
\Lambda_1(\Omega):=\inf_{\substack{u\in \cD^j(\Omega)\\ u\neq 0}}\frac{b_j(u)}{\|u\|_{L^2(\Omega)}^2}
$$
and let 
$$
\lambda(r)=\inf\{\Lambda_1(\Omega)\;:\;\text{$\Omega\subset \R^N$ open with $|\Omega|=r$ }\}.
$$
Then $\lim\limits_{r\to\infty}\lambda(r)\geq \int_{\R^N}j(z)\ dz$ with $j(z):=\essinf\{k(x,x\pm z)\;:\; z\in \R^N\}$ for $z\in \R^N$ as in \eqref{defi-j}.
\end{lemma}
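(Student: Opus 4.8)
The plan is to bound $b_j(u)$ from below for $u\in\cD^j(\Omega)$ by discarding the near-diagonal (singular) part of the kernel and then exploiting that $u$ vanishes outside the small-measure set $\Omega$. Fix $\rho>0$ and set $j_\rho:=j\,1_{\R^N\setminus B_\rho(0)}$. Reading the integrability condition in \eqref{assumptions} for the translation-invariant kernel $(x,y)\mapsto j(x-y)$ gives $\int_{\R^N\setminus B_\rho(0)}j(z)\,dz<\infty$, i.e.\ $j_\rho\in L^1(\R^N)$. Since $0\le j_\rho\le j$ one has $b_j(u)\ge b_{j_\rho}(u)$ for all $u$, with difference equal to the nonnegative term $\tfrac12\int_{\R^N}\int_{\R^N}(u(x)-u(y))^2 j(x-y)1_{\{|x-y|\le\rho\}}\,dx\,dy$, which is finite since $u\in\cD^j(\Omega)\subset D^j(\R^N)$. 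Because $j_\rho$ is integrable and even, expanding the square and using $u\equiv0$ on $\R^N\setminus\Omega$ gives
\[
b_{j_\rho}(u)=\Big(\int_{\R^N\setminus B_\rho(0)}j(z)\,dz\Big)\|u\|_{L^2(\Omega)}^2-\int_\Omega\int_\Omega u(x)u(y)\,j_\rho(x-y)\,dx\,dy.
\]

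The crux is to show the cross term is $o(1)\,\|u\|_{L^2(\Omega)}^2$ as $|\Omega|\to0$, uniformly with respect to the shape and position of $\Omega$. By the Cauchy--Schwarz inequality in $L^2\big(\Omega\times\Omega,\ j_\rho(x-y)\,dx\,dy\big)$ together with the symmetry of $j_\rho$,
\[
\Big|\int_\Omega\int_\Omega u(x)u(y)\,j_\rho(x-y)\,dx\,dy\Big|\le\int_\Omega u(x)^2\Big(\int_\Omega j_\rho(x-y)\,dy\Big)dx\le\Big(\sup_{x\in\R^N}\int_{(\Omega-x)\setminus B_\rho(0)}j(z)\,dz\Big)\|u\|_{L^2(\Omega)}^2.
\]
Since $|\Omega-x|=|\Omega|$ for every $x$, the supremum is at most $\varepsilon_j(|\Omega|,\rho):=\sup\{\int_E j(z)\,dz:\ E\subset\R^N\setminus B_\rho(0)\ \text{measurable},\ |E|\le|\Omega|\}$, and absolute continuity of the Lebesgue integral of $j_\rho\in L^1(\R^N)$ forces $\varepsilon_j(r,\rho)\to0$ as $r\to0^+$ for each fixed $\rho$. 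Combining the two displays, for every open $\Omega$ and every $u\in\cD^j(\Omega)\setminus\{0\}$,
\[
\frac{b_j(u)}{\|u\|_{L^2(\Omega)}^2}\ge\int_{\R^N\setminus B_\rho(0)}j(z)\,dz-\varepsilon_j(|\Omega|,\rho),
\]
so that $\Lambda_1(\Omega)\ge\int_{\R^N\setminus B_\rho(0)}j\,dz-\varepsilon_j(|\Omega|,\rho)$ and hence $\lambda(r)\ge\int_{\R^N\setminus B_\rho(0)}j\,dz-\varepsilon_j(r,\rho)$.

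Finally I would pass to the limit in the definition of $\lambda$: with $\rho$ fixed the error term disappears, so $\liminf_{r\to0^+}\lambda(r)\ge\int_{\R^N\setminus B_\rho(0)}j\,dz$; letting $\rho\to0^+$ and using monotone convergence $\int_{\R^N\setminus B_\rho(0)}j\uparrow\int_{\R^N}j$ then yields the assertion, whether the right-hand side is finite or $+\infty$. The step I expect to be the main obstacle is precisely the estimate of the cross term: the crude bound $\int_\Omega j(x-y)\,dy$ is useless when $j$ fails to be integrable near the origin, so one is forced to truncate first at scale $\rho$ --- paying the price $\int_{B_\rho(0)}j$ --- and to recover the full mass of $j$ only in the limit $\rho\to0^+$; the Cauchy--Schwarz step is exactly what guarantees that the resulting error is controlled by $|\Omega|$ alone and not by the diameter or other geometric features of $\Omega$.
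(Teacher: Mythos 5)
The paper does not prove this lemma: it simply cites [JW19, Proposition 3.3] and [JW20, Proposition 1.7], so there is no in-text argument to compare against. Your proof is a correct, self-contained substitute. The key ingredients all check out: $j_\rho := j\,1_{\R^N\setminus B_\rho(0)}\in L^1(\R^N)$ follows from the moment condition in \eqref{assumptionsj} because $\min\{1,|z|^\sigma\}\geq\min\{1,\rho^\sigma\}>0$ on $\R^N\setminus B_\rho(0)$; the expansion of $b_{j_\rho}(u)$ combined with the Cauchy--Schwarz step is exactly equivalent to the more customary lower bound $b_{j_\rho}(u)\geq\int_\Omega u^2(x)\,\kappa_{j_\rho,\Omega}(x)\,dx$, which you thereby rederive; absolute continuity of the integral of $j_\rho$ gives the required uniformity over all $\Omega$ of a given small measure, regardless of shape or position; and the final $\rho\to 0^+$ passage is justified by monotone convergence whether $\int_{\R^N}j\,dz$ is finite or not.

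One caveat, not on your side: the statement as printed carries a typo, writing $\lim_{r\to\infty}$ where $\lim_{r\to 0^+}$ must be meant. Indeed $\lambda$ is nonincreasing in $r$ and $\Lambda_1(B_R)\to 0$ as $R\to\infty$, so the $r\to\infty$ version cannot hold when $\int j>0$; and the lemma is invoked later, in the proof of Theorem \ref{thm:differentiability-step1}, precisely to make $\Lambda_1(B_4)$ large by shrinking the radius, which is the small-volume regime. You silently and correctly proved the intended $r\to 0^+$ statement. There is also a harmless misprint inside the essential infimum, where ``$z\in\R^N$'' should read ``$x\in\R^N$'' as in \eqref{defi-j}.
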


\begin{lemma}
\label{properties1}
Let $\Omega\subset \R^N$ open and let $X$ be any of the above function spaces. Then the following hold:
\begin{enumerate}
\item $b_{k,\Omega}$ is a bilinear form and in particular we have $b_{k,\Omega}(u,v)\leq b_{k,\Omega}^{1/2}(u)b_{k,\Omega}^{1/2}(v)$. Moreover, $D^k(\Omega)$ and $\cD^k(\Omega)$ are Hilbert spaces with scalar products 
\begin{align*}
\langle u,v\rangle_{D^k(\Omega)}&=\langle u,v\rangle_{L^2(\Omega)}+b_{k,\Omega}(u,v),\\
\langle u,v\rangle_{\cD^k(\Omega)}&=\langle u,v\rangle_{L^2(\Omega)}+b_{k,\R^N}(u,v).
\end{align*}
\item If $u\in X$, then $u^{\pm},|u|\in X$ and we have $b_{k,\Omega'}(u^+,u^-)\leq 0$ for all $\Omega'\subset \Omega$ with $b_{k,\Omega'}(u)<\infty$.
\item If $g\in C^{0,1}(\R^N)$, $u\in X$, then $g\circ u\in X$.
\item $C^{0,1}_c(\Omega)\subset X$.
\item $\phi\in C^{0,1}_c(\Omega)$, $u\in X$, then $\phi u\in \cD^k(\Omega)$, where if necessary we extend $u$ trivially to a function on $\R^N$. Moreover, there is $C=C(N,k,\|\phi\|_{C^{0,1}(\Omega)})>0$ such that 
$$
b_{k,\R^N}(\phi u)\leq C\Big(\|u\|_{L^2(\Omega')}^2+b_{k,\Omega'}(u)\Big)
$$
for any $\Omega'\subset\Omega$ with $\supp\,\phi\subset\subset \Omega'$.
\end{enumerate}
\end{lemma}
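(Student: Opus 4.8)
The plan is to treat the five assertions essentially in the stated order, since later parts lean on earlier ones. For (1), bilinearity of $b_{k,\Omega}$ is immediate from the definition, and the Cauchy--Schwarz-type inequality $b_{k,\Omega}(u,v)\le b_{k,\Omega}^{1/2}(u)b_{k,\Omega}^{1/2}(v)$ follows because the integrand defines a nonnegative symmetric bilinear form on pairs $(x,y)$ together with the ordinary Cauchy--Schwarz inequality in $L^2(\Omega\times\Omega, k(x,y)\,dx\,dy)$. That $\langle\cdot,\cdot\rangle_{D^k(\Omega)}$ and $\langle\cdot,\cdot\rangle_{\cD^k(\Omega)}$ are inner products is clear; the only real content is completeness. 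Here I would take a Cauchy sequence $(u_n)$, extract an $L^2$-limit $u$, and pass to the limit in the Gagliardo-type double integral using Fatou's lemma along an a.e.-convergent subsequence (after extracting once more so that $u_n(x)-u_n(y)\to u(x)-u(y)$ a.e.\ on $\Omega\times\Omega$); this shows $u\in D^k(\Omega)$ and $b_{k,\Omega}(u-u_n)\to 0$. For $\cD^k(\Omega)$ one additionally notes that the condition $u=0$ on $\R^N\setminus\Omega$ is closed under $L^2$-convergence, so the limit again lies in $\cD^k(\Omega)$.

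For (2), the pointwise identities $(u^+(x)-u^+(y))^2\le (u(x)-u(y))^2$ and similarly for $u^-$ and $|u|$, valid because $t\mapsto t^{\pm}$ and $t\mapsto|t|$ are $1$-Lipschitz, show that $b_{k,\Omega}(u^\pm)$ and $b_{k,\Omega}(|u|)$ are finite whenever $b_{k,\Omega}(u)$ is, and the same comparison handles the nonlocal tail bound in the definition of $\cV^k$ since $|u^\pm|\le|u|$ pointwise; hence $X$ is stable under these operations for each of the listed spaces. The inequality $b_{k,\Omega'}(u^+,u^-)\le 0$ comes from expanding $(u^+(x)-u^+(y))(u^-(x)-u^-(y))$: on the set where $u$ has the same sign at $x$ and $y$ one factor vanishes, and on the set where the signs differ the product is $\le 0$; integrating against the nonnegative kernel gives the claim. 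Part (3) is the same Lipschitz-comparison idea with Lipschitz constant $\|g\|_{C^{0,1}}$ instead of $1$ (and $|g\circ u|\le |g(0)|+\|g\|_{C^{0,1}}|u|$ for the $L^2$ and tail conditions, using that on a bounded $\Omega$ constants lie in every space — one should be slightly careful with $\cV^k(\Omega)$ on unbounded $\Omega$, where one instead uses $|g(u(y))|\le|g(u(x_0))|+\|g\|_{C^{0,1}}|u(y)-u(x_0)|$ type bounds, but the finiteness of the relevant integrals is inherited from $u\in X$).

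For (4), a function $\phi\in C^{0,1}_c(\Omega)$ is bounded and Lipschitz, so $\phi\in L^2(\Omega)$ and $b_{k,\Omega}(\phi)$ is finite: split the double integral into the near-diagonal region $|x-y|<1$, where $(\phi(x)-\phi(y))^2\le \|\nabla\phi\|_\infty^2|x-y|^2$ and one uses $\int\min\{1,|x-y|^{\sigma}\}k\,dy<\infty$ from \eqref{assumptions} together with $|x-y|^2\le|x-y|^\sigma$ for $|x-y|<1$ and $\sigma\le 2$, and the far region $|x-y|\ge 1$, where $(\phi(x)-\phi(y))^2\le 4\|\phi\|_\infty^2$ and again \eqref{assumptions} controls the mass; the tail condition in $\cV^k$ is trivial since $\phi$ has compact support. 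The main work is (5). Given $\phi\in C^{0,1}_c(\Omega)$ and $u\in X$, the product $\phi u$ is supported in $\supp\phi\subset\subset\Omega'$ so the "$=0$ outside $\Omega$" requirement is automatic; it remains to estimate $b_{k,\R^N}(\phi u)$. I would use the Leibniz-type splitting
$$
(\phi u)(x)-(\phi u)(y)=\phi(x)\big(u(x)-u(y)\big)+u(y)\big(\phi(x)-\phi(y)\big),
$$
so that, after squaring and using $(a+b)^2\le 2a^2+2b^2$,
$$
b_{k,\R^N}(\phi u)\le 2\|\phi\|_\infty^2\, b_{k,\Omega'}(u)+2\int_{\R^N}\int_{\R^N} u(y)^2\big(\phi(x)-\phi(y)\big)^2 k(x,y)\,dx\,dy,
$$
where I have replaced $b_{k,\R^N}$ by $b_{k,\Omega'}$ in the first term because, restricted to $\supp(\phi u)\times\supp(\phi u)$ (and its complement contributions, where $\phi(x)=0$ forces the surviving term to be $u(y)^2(\phi(x)-\phi(y))^2$), the term $\phi(x)(u(x)-u(y))$ is supported in $\Omega'\times\Omega'$. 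For the second term, on $|x-y|<1$ bound $(\phi(x)-\phi(y))^2\le \|\nabla\phi\|_\infty^2\min\{|x-y|^2, (2\|\phi\|_\infty/\|\nabla\phi\|_\infty)^2\}\lesssim \min\{1,|x-y|^\sigma\}$ and integrate in $x$ first using \eqref{assumptions} to get $\lesssim \|u\|_{L^2(\Omega'')}^2$ for a slightly larger $\Omega''$ (and then absorb via $\|u\|_{L^2(\Omega')}^2+b_{k,\Omega'}(u)$, enlarging $\Omega'$ inside $\Omega$ if needed — this is exactly why the statement allows $C$ to depend on $\Omega'$ with $\supp\phi\subset\subset\Omega'$); on $|x-y|\ge 1$ the factor $(\phi(x)-\phi(y))^2\le 4\|\phi\|_\infty^2$ is not small, but then $\phi(y)\ne 0$ forces $y\in\supp\phi\subset\subset\Omega'$, so the $y$-integral is over a bounded set and we again use $\sup_x\int \min\{1,|x-y|^\sigma\}k(x,y)\,dy<\infty$ after noting $1\le\min\{1,|x-y|^\sigma\}$ for $|x-y|\ge 1$. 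Collecting the pieces yields the asserted bound with $C=C(N,k,\|\phi\|_{C^{0,1}(\Omega)})$.

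I expect the bookkeeping in (5) — keeping track of which enlargement of $\Omega'$ the various $L^2$-norms live on and checking that the far-diagonal term is genuinely controlled because $\phi$ has compact support — to be the main obstacle, though nothing beyond \eqref{assumptions} and elementary inequalities is needed; everything else is a routine consequence of the $1$-Lipschitz comparison principle and Fatou's lemma.
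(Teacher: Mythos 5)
Parts (1)--(4) follow the standard routine and match the paper (which dismisses them with ``these statements follow directly from the definition''). The issue is in part (5). The paper's argument rests on the exact decomposition
$$b_{k,\R^N}(\phi u)=b_{k,\Omega'}(\phi u)+K_{k,\Omega'}(\phi u)=b_{k,\Omega'}(\phi u)+\int_{\Omega'}(\phi u)^2(x)\kappa_{k,\Omega'}(x)\,dx,$$
which holds because $\phi u$ vanishes outside $\supp\phi\subset\subset\Omega'$; the far contribution is thus the single $\kappa$-term, controlled by $\|\phi\|_\infty^2\big(\sup_{x\in\supp\phi}\kappa_{k,\Omega'}(x)\big)\|u\|_{L^2(\Omega')}^2$ since $\dist(\supp\phi,\R^N\setminus\Omega')>0$ keeps $\kappa_{k,\Omega'}$ bounded on $\supp\phi$, and the Leibniz split is applied only inside $\Omega'\times\Omega'$, where everything is genuinely local. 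You reverse this order: you Leibniz-split over all of $\R^N\times\R^N$ first and then assert that the first term, $\iint\phi(x)^2(u(x)-u(y))^2k\,dx\,dy$, is ``supported in $\Omega'\times\Omega'$''. That is false: $\phi(x)\neq0$ confines only $x$ to $\supp\phi$, while $y$ is free to lie in $\Omega\setminus\Omega'$ (or $\R^N\setminus\Omega$), and for such $y$ the factor $(u(x)-u(y))^2$ involves $u(y)$ and cannot be dominated by $b_{k,\Omega'}(u)$ or $\|u\|_{L^2(\Omega')}^2$. The justification you give --- that off $\supp(\phi u)\times\supp(\phi u)$ one has ``$\phi(x)=0$, forcing the surviving term to be $u(y)^2(\phi(x)-\phi(y))^2$'' --- treats only the case $\phi(x)=0$ and overlooks the case $\phi(x)\neq0$, $\phi(y)=0$, where \emph{both} Leibniz terms persist. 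The second term suffers from the analogous leak: for $x\in\supp\phi$, $y\in\Omega\setminus\Omega'$ with $|x-y|\ge1$, you get $u(y)^2\phi(x)^2 k(x,y)$, which requires control of $u$ on a region larger than $\Omega'$, whereas the statement asserts a bound in terms of $\|u\|_{L^2(\Omega')}^2+b_{k,\Omega'}(u)$ alone, uniformly in the choice of $\Omega'\supset\supset\supp\phi$. ``Enlarging $\Omega'$'' is not an option under the statement as written.

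In short, the missing idea is to peel off the exterior contribution \emph{before} the product rule, using that $\phi u$ has compact support in $\Omega'$, so that $b_{k,\R^N}(\phi u)=b_{k,\Omega'}(\phi u)+K_{k,\Omega'}(\phi u)$ and the Leibniz split never sees $y\notin\Omega'$. With that reordering the rest of your estimates (the near/far split with $(\phi(x)-\phi(y))^2\lesssim\min\{1,|x-y|^\sigma\}$, bounding $\kappa_{k,\Omega'}$ on $\supp\phi$, etc.) go through and recover the paper's bound.
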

\begin{proof}
Theses statements follow directly from the definition (c.f. \cite[Section 3]{JW19}). To be precise in the last part, let $\phi\in C^{0,1}_c(\Omega)$ and fix $L:=\|\phi\|_{C^{0,1}(\Omega)}$. That is, we have
\[
|\phi(x)|\leq L\quad\text{and}\quad |\phi(x)-\phi(y)|\leq L|x-y|.
\]
Then using the inequality for $x,y\in \R^N$
\[
|\phi(x)u(x)-\phi(y)u(y)|^2\leq 2|\phi(x)-\phi(y)|^2|u(x)|^2+2|\phi(y)|^2|u(x)-u(y)|^2
\]
we find by the assumptions \eqref{assumptions}
\begin{align*}
b_{k,\R^N}(\phi u)&\leq b_{k,\Omega'}(\phi u)+L^2\int_{\supp\,\phi}|u(x)|^2\kappa_{k,\Omega'}(x)\ dx\\
&\leq 2L^2\int_{\Omega'}\int_{\Omega'}|u(x)|^2|x-y|^2k(x,y)\ dy\ dx+2L^2b_{k,\Omega'}(u)+L^2\sup_{x\in \supp\,\phi}\kappa_{k,\Omega'}(x)\|u\|_{L^2(\Omega')}^2\\
&\leq 2L^2\Big(\sup_{x\in \Omega'}\int_{\Omega'} |x-y|^2k(x,y)\ dy+\sup_{x\in \supp\,\phi}\kappa_{k,\Omega'}(x)\Big)\|u\|_{L^2(\Omega')}^2+2L^2b_{k,\Omega'}(u)<\infty.
\end{align*}
\end{proof}

\begin{rem}
\label{bilinearform rewrite}
\begin{enumerate}
\item Note that for $u,v\in \cD^k(\Omega)$ we have 
\[
b_{k}(u,v)=b_{k,\R^N}(u,v)=b_{k,\Omega}(u,v)+K_{k,\Omega}(u,v).
\]
\item It follows in particular that there is a nonnegative self-adjoint operator $I_k$ associated to $b_{k,\R^{N}}=b_k$ as mentioned in the introduction.
\end{enumerate}
\end{rem}

\begin{lemma}
\label{well-defined}
Let $\Omega\subset \R^N$ open and $u\in \cV_{loc}^k(\Omega)$. Then $b_{k}(u,\phi)$ is well-defined for any $\phi \in C^{\infty}_c(\Omega)$.
\end{lemma}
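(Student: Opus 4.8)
The plan is to split the double integral defining $b_k(u,\phi)$ into a near-diagonal piece, where the smoothness of $\phi$ compensates for the singularity of $k$, and a far-from-diagonal piece, where the tail condition built into $\cV^k$ does the work. First I would fix $\phi\in C^\infty_c(\Omega)$ and choose $\Omega'\subset\subset\Omega$ open with $\supp\,\phi\subset\subset\Omega'$; since $u\in\cV^k_{loc}(\Omega)$ we have $u|_{\Omega'}\in\cV^k(\Omega')$, hence $u|_{\Omega'}\in D^k(\Omega')$ and $u$ satisfies the global tail bound $\sup_{x\in\R^N}\int_{\R^N\setminus B_r(x)}|u(y)|k(x,y)\,dy<\infty$ for every $r>0$ (relative to the kernel on $\Omega'$, in the sense the definition of $\cV^k(\Omega')$ requires). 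Fix $r>0$ small enough that $B_r(\supp\,\phi)\subset\Omega'$.

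Next I would write, using symmetry of $k$ and that $\phi$ vanishes outside $\supp\,\phi$,
\[
b_k(u,\phi)=\frac12\int_{\R^N}\int_{\R^N}(u(x)-u(y))(\phi(x)-\phi(y))k(x,y)\,dx\,dy
=\int_{\R^N}\int_{\R^N}(u(x)-u(y))\phi(x)k(x,y)\,dy\,dx,
\]
and split the inner region into $\{|x-y|<r\}$ and $\{|x-y|\ge r\}$. On the near-diagonal region the integrand is supported in $x\in\supp\,\phi$, $y\in\Omega'$, so I bound it by a constant times $\phi$'s Lipschitz constant using $|\phi(x)-\phi(y)|\le L|x-y|$ together with the Cauchy–Schwarz-type estimate already used in the proof of Lemma \ref{properties1}(5): one gets control by $\big(\sup_{x\in\Omega'}\int_{\Omega'}|x-y|^2k(x,y)\,dy\big)^{1/2}$ times $\|u\|_{L^2(\Omega')}$ plus $b_{k,\Omega'}(u)^{1/2}$-type terms, all finite because $u|_{\Omega'}\in D^k(\Omega')$ and because $\int\min\{1,|x-y|^\sigma\}k\,dy<\infty$ controls $\int_{|x-y|<r}|x-y|^2 k\,dy$ (as $\sigma\le 2$). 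On the far region $\{|x-y|\ge r\}$ I split $u(x)-u(y)$ into the two terms: $\int_{\supp\,\phi}|\phi(x)||u(x)|\,dx\cdot\sup_{x}\kappa$-type bounds handle the $u(x)$ part (finite since $\kappa_{k,\Omega'}(x)\le\int_{|x-y|\ge r}k(x,y)\,dy<\infty$ uniformly on the compact $\supp\,\phi$ by \eqref{assumptions}), while the $u(y)$ part is exactly $\int_{\supp\,\phi}|\phi(x)|\int_{\R^N\setminus B_r(x)}|u(y)|k(x,y)\,dy\,dx$, which is finite by the defining tail property of $\cV^k(\Omega')$. Adding the four contributions shows the double integral is absolutely convergent, so $b_k(u,\phi)$ is well-defined.

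The only genuinely delicate point is matching the kernel in the tail condition: the definition of $\cV^k(\Omega')$ gives a bound with $k(x,y)$ for $x\in\R^N$, $y\in\R^N$, and I must make sure the $u(y)$ tail term really falls under it — i.e., that restricting to $x\in\supp\,\phi\subset\Omega'$ only shrinks the domain of integration. That is immediate, so there is no real obstacle; the remaining work is the routine bookkeeping of the four pieces, all of which reduce to finiteness statements already recorded in \eqref{assumptions}, in $u|_{\Omega'}\in D^k(\Omega')$, and in the tail condition of $\cV^k$.
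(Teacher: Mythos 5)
Your proof is correct and follows essentially the same route as the paper's: isolate a near region where $u,\phi\in D^k$ plus Cauchy--Schwarz handle the singularity, and a far region where you split $u(x)-u(y)$, controlling the $u(x)$-piece by the finiteness of $\sup_x\int_{|x-y|\geq r}k(x,y)\,dy$ from \eqref{assumptions} and the $u(y)$-piece by the tail condition in the definition of $\cV^k$. The paper splits $\R^N\times\R^N$ into $U\times U$ and its complement (with $U\supset\supp\phi$), while you split at $\{|x-y|<r\}$ versus $\{|x-y|\geq r\}$; these lead to the same finiteness statements, so the difference is cosmetic. One small expositional wobble: after you rewrite $b_k(u,\phi)$ as $\iint (u(x)-u(y))\phi(x)k$, there is no longer a difference $\phi(x)-\phi(y)$ in sight, yet you invoke $|\phi(x)-\phi(y)|\leq L|x-y|$ for the near piece. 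The fix is to re-symmetrize the near-diagonal integral (which you may, since $\{|x-y|<r\}$ is symmetric in $(x,y)$) back to $\tfrac12\iint_{|x-y|<r}(u(x)-u(y))(\phi(x)-\phi(y))k$ before applying Cauchy--Schwarz, or simply use Cauchy--Schwarz with $b_{k,\Omega'}^{1/2}(u)\,b_{k,\Omega'}^{1/2}(\phi)$ as the paper does, which avoids the Lipschitz bound altogether.
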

\begin{proof}
Let $\phi\in C^{\infty}_c(\Omega)$ and fix $U\subset \subset\Omega$ such that $\supp\ \phi\subset\subset U$. Then with the symmetry of $k$
\begin{align*}
|b_{k}(u,\phi)|&\leq |b_{k,U}(u,\phi)|+\int_{U} |\phi(x)|\int_{\R^N\setminus U} |u(x)-u(y)|k(x,y)\ dydx\\
&\leq b_{k,U}^{1/2}(u)b_{k,U}^{1/2}(\phi)+\int_{\supp\,\phi}|\phi(x)| \ dx\sup_{x\in \R^N}\int_{\R^N\setminus B_{\epsilon}(x)}|u(y)|k(x,y)\ dy\\
&\qquad\qquad\qquad +\int_{\supp\,\phi}|\phi(x)u(x)| \ dx \sup_{x\in \R^N}\int_{\R^N\setminus B_{\epsilon}(x)}k(x,y)\ dy<\infty,
\end{align*}
where $\epsilon=\dist(\supp\ \phi,\R^N\setminus U)>0$.
\end{proof}
 
\begin{defi}
	\label{defi-weak}
Let $\Omega\subset \R^N$ open and $f\in L^1_{loc}(\Omega)$. Then $u\in \cV_{loc}^k(\Omega)$ is called a \textit{weak supersolution} of $I_ku=f$ in $\Omega$, if
\[
b_{k}(u,\phi)\geq \int_{\Omega} f(x)\phi(x)\ dx\quad\text{for all nonnegative $\phi\in C^{\infty}_c(\Omega)$.}
\]
We also say that \textit{$u$ satisfies $I_ku\geq f$ weakly in $\Omega$}.\\
Similarly, we define \textit{weak subsolutions} and \textit{solutions}.
\end{defi}

\begin{rem}
	\begin{enumerate}
		\item We note that by Assumption \ref{assumptions}, it follows that for any function $u\in \cV^k_{loc}(\Omega)$ with $u|_{\Omega}\in C^{0,1}(\Omega)$ for $\Omega\subset \R^N$ open, we have $I_ku|_U\in L^{\infty}(U)$ for any $U\subset\subset \Omega$ and
$$
	I_ku(x)=\int_{\R^N}(u(x)-u(y))k(x,y)\ dy\quad\text{for $x\in \Omega$.}
$$
This follows similarly to the proof of the statements in Lemma \ref{properties1}.
\item If $u\in \cV^k(\Omega)$, then indeed also $b_k(u,\phi)$ is well-defined for all $\phi\in \cD^k(\Omega)$. Hence also $b_k$ is well defined on $\cV^k_{loc}(\Omega)\times \cD^k(U)$ for all $U\subset\subset \Omega$. In some of our results the statements need a Lipschitz-boundary of $\Omega$, which comes into play due to approximation with $C^{\infty}_c(\Omega)$-functions (see Section \ref{sec:dense} below). However, this can be weakened, if $u\in \cV^k(\Omega)$ and the space of test-functions is adjusted.
\end{enumerate}
\end{rem}

\begin{lemma}
	\label{localization}
	Let $\Omega\subset \R^N$ open. Let $\Omega_1\subset\subset\Omega_2\subset\subset \Omega_3\subset\subset \Omega$. Let $\eta\in C^{0,1}_c(\R^N)$ such that $0\leq \eta\leq 1$ in $\R^N$ and we have
	$$
	\eta=1\quad\text{in $\Omega_{2}\quad$ and }\quad \eta=0\quad\text{in $\R^N\setminus \Omega_3$.}
	$$
	Let $f\in L^1_{loc}(\Omega)$ and let $u\in \cV^k_{loc}(\Omega)$ satisfy in weak sense $Iu\geq f$ in $\Omega$. Then the function $v=\eta u\in \cD^k(\Omega_3)$ satisfies in weak sense $Iv\geq f+g_{\eta,u}(x)$ in $\Omega_1$, where
	$$
	g_{\eta,u}(x)=\int_{\R^N\setminus \Omega_2}(1-\eta(y))u(y)k(x,y)\ dy\quad\text{for $x\in \Omega_1$.}
	$$
\end{lemma}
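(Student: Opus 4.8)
The plan is to split the bilinear form $b_k(v,\phi)$ for $v = \eta u$ and a nonnegative test function $\phi \in C^\infty_c(\Omega_1)$ into a "near" part, living on $\Omega_2 \times \Omega_2$, and a "far" tail, and to recognize in the near part exactly $b_k(u,\phi)$ (since $\eta \equiv 1$ on $\Omega_2$ and $\supp\phi \subset \Omega_1 \subset\subset \Omega_2$), while the far tail reassembles as $\int_{\Omega_1} g_{\eta,u}\phi$. First I would check $v \in \cD^k(\Omega_3)$: this is immediate from Lemma \ref{properties1}(5) applied with the Lipschitz cutoff $\eta$ and the function $u$, noting $\supp\eta \subset\subset \Omega_3$ and $u|_{\Omega_3} \in \cV^k(\Omega_3)$ (so in particular $u|_{\Omega_3} \in D^k(\Omega_3)$) by Lemma \ref{properties2}(2). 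Hence $b_k(v,\phi)$ is well-defined for all $\phi \in C^\infty_c(\Omega_1)$ by Lemma \ref{well-defined} (or directly by Remark after it, since $v \in \cD^k(\Omega_3) \subset \cV^k(\Omega_3)$).

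The main computation: for nonnegative $\phi \in C^\infty_c(\Omega_1)$ write
$$
b_k(v,\phi) = \frac12 \int_{\R^N}\int_{\R^N} (v(x)-v(y))(\phi(x)-\phi(y)) k(x,y)\,dx\,dy.
$$
Because $\supp\phi \subset \Omega_1$, only pairs $(x,y)$ with $x \in \Omega_1$ or $y \in \Omega_1$ contribute; by symmetry of $k$ I would reduce to $2\cdot\frac12\int_{\Omega_1}\int_{\R^N}(v(x)-v(y))(\phi(x)-\phi(y))k(x,y)\,dy\,dx$ after absorbing the symmetric overlap — more carefully, the standard identity is $b_k(v,\phi) = \int_{\Omega_1}\phi(x)\big(\int_{\R^N}(v(x)-v(y))k(x,y)\,dy\big)dx$ once $\phi$ is supported in $\Omega_1$ and the inner integral is absolutely convergent (which holds since $v \in \cV^k(\Omega_3)$ and $\phi$ has compact support, exactly as in the proof of Lemma \ref{well-defined}). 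Now for $x \in \Omega_1$, split $\int_{\R^N} = \int_{\Omega_2} + \int_{\R^N\setminus\Omega_2}$. On $\Omega_2$ we have $v = u$ (both $x,y \in \Omega_2$ give $\eta \equiv 1$), so $\int_{\Omega_2}(v(x)-v(y))k(x,y)\,dy = \int_{\Omega_2}(u(x)-u(y))k(x,y)\,dy$. On $\R^N\setminus\Omega_2$ we have, for $x\in\Omega_1\subset\Omega_2$, that $v(x) = u(x)$ and $v(y) = \eta(y)u(y)$, hence
$$
(v(x)-v(y)) = (u(x)-u(y)) + (1-\eta(y))u(y),
$$
and integrating against $k(x,y)\,dy$ over $\R^N\setminus\Omega_2$ and adding back the $\Omega_2$ piece gives
$$
\int_{\R^N}(v(x)-v(y))k(x,y)\,dy = \int_{\R^N}(u(x)-u(y))k(x,y)\,dy + g_{\eta,u}(x).
$$
Multiplying by $\phi(x)$, integrating over $\Omega_1$, and reassembling the right-hand side into $b_k(u,\phi)$ via the same support argument, we obtain $b_k(v,\phi) = b_k(u,\phi) + \int_{\Omega_1} g_{\eta,u}(x)\phi(x)\,dx \geq \int_{\Omega_1} f(x)\phi(x)\,dx + \int_{\Omega_1} g_{\eta,u}(x)\phi(x)\,dx$, using the weak supersolution property of $u$ on $\Omega \supset \Omega_1$ (valid since $\phi \in C^\infty_c(\Omega_1) \subset C^\infty_c(\Omega)$). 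This is the claim.

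The main obstacle is purely a matter of bookkeeping with the absolute convergence and the decomposition $b_k(v,\phi) = \int_{\Omega_1}\phi\, I_k v$: one must justify splitting the double integral and interchanging order of integration, which requires that $\int_{\R^N\setminus B_\varepsilon(x)}|v(y)|k(x,y)\,dy$ and $\int_{\R^N\setminus B_\varepsilon(x)}k(x,y)\,dy$ are bounded uniformly in $x$ near $\supp\phi$ (from $v \in \cV^k(\Omega_3)$ and \eqref{assumptions}) together with $b_{k,U}^{1/2}(v)b_{k,U}^{1/2}(\phi) < \infty$ on a neighborhood $U$ of $\supp\phi$ with $U \subset\subset \Omega_2$ — all of which is already packaged in the proof of Lemma \ref{well-defined}. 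I would also remark that $g_{\eta,u}$ is finite for each $x \in \Omega_1$ because $|1-\eta| \le 1$ is supported in $\R^N\setminus\Omega_2$ and $u \in \cV^k_{loc}(\Omega)$ bounds $\int_{\R^N\setminus B_r(x)}|u(y)|k(x,y)\,dy$ for $r = \dist(\Omega_1,\R^N\setminus\Omega_2)>0$; in fact $g_{\eta,u} \in L^\infty_{loc}(\Omega_1)$, so $f + g_{\eta,u} \in L^1_{loc}(\Omega_1)$ and Definition \ref{defi-weak} applies on $\Omega_1$.
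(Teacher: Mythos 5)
Your proposal follows the same underlying algebraic idea as the paper's proof: since $v - u = -(1-\eta)u$ vanishes identically on $\Omega_2\supset\supset\Omega_1\supset\supset\supp\phi$, the difference $b_k(v,\phi)-b_k(u,\phi)$ collapses to a far-field convolution term $\int_{\Omega_1}g_{\eta,u}\phi$, and then the supersolution property of $u$ finishes the estimate. The paper phrases this by writing $u = v + (1-\eta)u$, testing against $I_k\phi$ (which, crucially, is pointwise well defined since $\phi\in C^\infty_c$), and transferring $I_k$ onto $(1-\eta)u$ by symmetry; you instead try to write $b_k(v,\phi)=\int_{\Omega_1}\phi(x)I_kv(x)\,dx$ with the operator acting on $v$.

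That choice creates the one imprecision in your write-up: the assertion that the inner integral $\int_{\R^N}(v(x)-v(y))k(x,y)\,dy$ is ``absolutely convergent\dots since $v\in\cV^k(\Omega_3)$'' is not actually a consequence of membership in $\cV^k$. The $\cV^k$/$D^k$ conditions give an $L^2$-type control of $b_{k,\Omega_3}(v)$ and a tail bound $\sup_x\int_{\R^N\setminus B_r(x)}|v(y)|k(x,y)\,dy<\infty$, but for a general such $v$ the near-diagonal part $\int_{B_\eps(x)}|v(x)-v(y)|k(x,y)\,dy$ can be infinite for a.e.~$x$. Lemma \ref{well-defined} shows the pairing $b_k(v,\phi)$ is well defined as a double integral, not that $I_kv(x)$ exists pointwise. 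The cleanest repair stays entirely at the level of the bilinear form: split $b_k(v-u,\phi)$ over the blocks $\Omega_2\times\Omega_2$, $\Omega_2\times\Omega_2^c$ (plus its symmetric counterpart), and $\Omega_2^c\times\Omega_2^c$; only the middle block survives (since $v-u\equiv 0$ on $\Omega_2$ and $\phi\equiv 0$ off $\Omega_1$), and on it $k$ has no singularity because $\dist(\Omega_1,\R^N\setminus\Omega_2)>0$, so everything is absolutely convergent and Fubini applies with no appeal to a pointwise $I_kv$. With this patch your argument is correct and is essentially the paper's proof written on the ``$v$'' side rather than the ``$\phi$'' side.
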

\begin{proof}
	The fact, that $v\in \cD^k(\Omega_3)$ follows from Lemma \ref{properties1}. Let $\phi\in C^{\infty}_c(\Omega_1)$, then
	$$
	\int_{\R^N}v(x)I_k\phi(x)\ dx\geq \int_{\R^N}f(x)\phi(x)\ dx-\int_{\R^N}(1-\eta(x))u(x)I_k\phi(x)\ dx.
	$$
	Here, since $(1-\eta)u\equiv 0$ on $\Omega_2$, we have
	\begin{align*}
	\int_{\R^N}(1-\eta(x))u(x)I_k\phi(x)\ dx&=\int_{\R^N}\phi(x) [I_k(1-\eta)u](x)\ dx\\
	&=-\int_{\Omega_1}\phi(x) \int_{\R^N\setminus \Omega_2}(1-\eta(y))u(y)k(x,y)\ dy\ dx.
	\end{align*}
	Thus the claim follows.
\end{proof}

\begin{rem}
	The same result as in Lemma \ref{localization} also holds if ``$\geq$'' in the solution type is replaced by ``$\leq$'' or ``$=$''.
\end{rem}

In the following, it is useful to understand functions $u\in D^k(\Omega)$ satisfying $b_{k,\Omega}(u)=0$.

\begin{prop}
	\label{constant}
	Assume that the symmetric lower bound $j$ of $k$ defined in \eqref{defi-j} satisfies $\int_{\R^N} j(z)\ dz=\infty$. Let $\Omega\subset \R^N$ open and bounded and let $u\in D^k(\Omega)$ such that $b_{k,\Omega}(u)=0$. Then $u$ is constant.
\end{prop}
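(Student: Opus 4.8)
The plan is to use only that the symmetric lower bound $j$ is even and that, by \eqref{assumptions}, the hypothesis $\int_{\R^N}j\,dz=\infty$ means the singularity of $j$ at the origin cannot be removed. First I would reduce to the bilinear form attached to $j$: since $k(x,y)\geq j(x-y)\geq 0$ we have $0\leq b_{j,\Omega}(u)\leq b_{k,\Omega}(u)=0$, so $\int_\Omega\int_\Omega(u(x)-u(y))^2j(x-y)\,dx\,dy=0$. Substituting $y=x-z$ and applying Fubini rewrites this as $\int_{\R^N}j(z)\big(\int_{\Omega\cap(\Omega+z)}(u(x)-u(x-z))^2\,dx\big)\,dz=0$, so there is a null set $\cN\subset\R^N$ with
\[
u(x)=u(x-z)\quad\text{for a.e.\ }x\in\Omega\cap(\Omega+z),\ \text{whenever }j(z)>0\text{ and }z\notin\cN.
\]

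Next I would record that \eqref{assumptions} forces $\int_{\R^N\setminus B_\varepsilon(0)}j\,dz<\infty$ for every $\varepsilon>0$, so that $\int_{\R^N}j\,dz=\infty$ gives $\int_{B_\varepsilon(0)}j\,dz=\infty$, and in particular $\big|\{j>0\}\cap B_\varepsilon(0)\big|>0$, for every $\varepsilon>0$. The core step is then a chaining argument. Fix $x_0\in\Omega$ and $\rho>0$ with $B_\rho(x_0)\subset\subset\Omega$, and set $r:=\rho/8$. The set $E:=\big(\{j>0\}\cap B_r(0)\big)\setminus\cN$ has positive measure, and it is symmetric because $j$ is even. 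For every $z\in E$ the displayed identity gives $u(\cdot)=u(\cdot-z)$ a.e.\ on $B_{3r}(x_0)$ (for $x\in B_{3r}(x_0)$ and $|z|<r$, both $x$ and $x-z$ lie in $B_\rho(x_0)$). By Steinhaus' theorem — concretely, because $1_E\ast 1_E$ is continuous and positive at the origin (it equals $|E|>0$ there) — there is $\eta_0>0$ with $B_{\eta_0}(0)\subset E-E$. Given $|w|<\eta_0$, choose $z_1,z_2\in E$ with $w=z_2-z_1$; then for a.e.\ $x\in B_r(x_0)$,
\[
u(x+w)=u\big((x-z_1)+z_2\big)=u(x-z_1)=u(x),
\]
the middle equality applying invariance under $z_2\in E$ at the point $y=x+w\in B_{3r}(x_0)$ (note $y-z_2=x-z_1\in B_{2r}(x_0)$) and the last applying invariance under $z_1\in E$. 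Hence $u(\cdot+w)=u(\cdot)$ a.e.\ on $B_r(x_0)$ for \emph{every} $w\in B_{\eta_0}(0)$.

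To finish, I would mollify: for small $\delta$ the function $u_\delta:=u\ast\varphi_\delta$ is smooth on $B_{r/2}(x_0)$ and, by the previous step, invariant under all translations in $B_{\eta_0}(0)$ there, so $\nabla u_\delta\equiv 0$ on $B_{r/2}(x_0)$; letting $\delta\to 0$ shows $u$ is a.e.\ constant on $B_{r/2}(x_0)$. Since $x_0\in\Omega$ was arbitrary, $u$ is locally constant on $\Omega$, hence constant (on each connected component of $\Omega$, and in particular on $\Omega$ when $\Omega$ is connected, which is the case of interest).

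The step I expect to be the main obstacle is the chaining: the Dirichlet identity only gives invariance of $u$ under translation by \emph{almost every} $z$ in the possibly very thin set $\{j>0\}$, and this has to be upgraded to invariance under \emph{all} small translations. This is precisely where the positivity of $|\{j>0\}\cap B_\varepsilon(0)|$ for every $\varepsilon>0$ enters, via the Steinhaus-type argument above; the remaining difficulty is purely the bookkeeping of which balls the translated points stay inside, which is what dictates the awkward radii $\rho/8$, $3\rho/8$, etc.
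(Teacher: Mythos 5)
Your argument is correct, and it reaches the same conclusion as the paper's, but by a genuinely different route. The paper truncates $j$ to $q(z)=\min\{c,j(z)\}1_{B_r(0)}(z)$ (so that $q\in L^1\cap L^\infty$ and $|\{q>0\}|>0$), invokes Lemma~\ref{iterationlemma} to bound $b_{q\ast q,B_r(x_0)}(u)\leq 4\|q\|_{L^1}\,b_{q,\Omega}(u)\leq 4\|q\|_{L^1}\,b_{k,\Omega}(u)=0$, and then uses that $q\ast q$ is continuous with $q\ast q(0)=\|q\|_{L^2}^2>0$ to force $u(x)=u(y)$ for a.e.\ $x,y$ in a small ball; the extension to all of $\Omega$ is the same open-and-closed argument you give. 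You instead pass immediately to $b_{j,\Omega}(u)=0$, unfold it via the substitution $y=x-z$ and Fubini to get translation invariance $u(\cdot)=u(\cdot-z)$ a.e.\ for a.e.\ $z$ with $j(z)>0$, and then upgrade this with Steinhaus' theorem ($1_E\ast1_E$ continuous and positive at the origin, $E$ symmetric since $j$ is even after symmetrizing the null set) followed by mollification. The two proofs are conceptually close --- both hinge on the fact that a positive-measure set near the origin, self-convolved or self-differenced, fills a neighborhood of $0$ --- but your version sidesteps the appendix Lemma~\ref{iterationlemma} entirely at the cost of the Steinhaus/mollification bookkeeping, and it makes the "translation invariance" mechanism behind the result more transparent. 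The only places that deserve an explicit word in a write-up are (i) replacing $\cN$ by $\cN\cup(-\cN)$ so that $E$ is genuinely symmetric before invoking $1_E\ast1_E(0)=|E|$, and (ii) noting that the null set in "$u(x+w)=u(x)$ for a.e.\ $x$" depends on $w$, which is exactly why the mollification step is needed rather than concluding directly; you do in fact handle both correctly. Your closing caveat about connected components mirrors the implicit connectedness used in the paper's own $W=\Omega$ step, so you are in agreement with the paper there as well.
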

\begin{proof}
	Let $x_0\in \Omega$ and fix $r>0$ such that $B_{2r}(x_0)\subset \Omega$. Denote $q(z):=\min\{c,j(z)\}1_{B_r(0)}(z)$, where we may fix $c>0$ such that $|\{q>0\}|>0$ due to the assumption on $j$. Then by Lemma \ref{iterationlemma} we have
	\[
	0=2b_{k,\Omega}(u)\geq 2b_{q,\Omega}(u)\geq \frac{1}{2\|q\|_{L^1(\R^N)}} b_{q\ast q,B_{r}(x_0)}(u),
	\]
	where $a\ast b=\int_{\R^N}a(\cdot-y)b(y)\ dy$ denotes as usual the convolution.	Note that since $q\in L^1(\R^N)\cap L^{\infty}(\R^N)$ with $q=0$ on $\R^N\setminus B_r(0)$, it follows that $q\ast q\in C(\R^N)$ with support in $B_{2r}(0)$ and we have
	\[
	q\ast q(0)=\int_{\R^N}q(z)^2\ dz>0
	\]
	by the assumption on $j$. Hence there is $R>0$ with $q\ast q\geq \epsilon$ for some $\epsilon>0$ and thus we have
	\[
	0=b_{q\ast q,B_{r}(x_0)}(u)\geq b_{q\ast q,B_{\rho}(x_0)}(u)\geq \frac{\epsilon}{2}\int_{B_{\rho}(x_0)}\int_{B_{\rho}(x_0)}(u(x)-u(y))^2\ dxdy.
	\]
	for any $\rho\in(0,\frac{R}{2}]$. But then $u(x)=u(y)$ for almost every $x,y\in B_{R/2}(x_0)$ so that $u$ is constant a.e. in $B_{\rho}(x_0)$. Since $b_{k,\Omega}(u)=b_{k,\Omega}(u-m)$ for any $m\in \R$, we may next assume that $u=0$ in $B_{R/2}(x_0)$ and show that indeed we have $u=0$ a.e. in $\Omega$. Denote by $W$ the set of points $x\in \Omega$ such that there is $r>0$ with $u=0$ a.e. in $B_r(x)$. By definition $W$ is open and the above shows that $W$ is nonempty. Next, let $(x_n)_n\subset W$ be a sequence with $x_n\to x\in \Omega$ for $n\to\infty$. Then there is $r_x>0$ such that $B_{4r_x}(x)\subset \Omega$ and we can find $n_0\in \N$ such that $x\in B_{r_x}(x_n)\subset B_{2r_x}(x_n)\subset \Omega$ for $n\geq n_0$. Repeating the above argument, it follows that $u$ must be zero in $B_{r_x}(x_n)$ and thus $x\in W$. Hence, $W$ is relatively open and closed in $\Omega$ and since $W$ is nonempty, we have $W=\Omega$. That is $u=0$ in $\Omega$.
\end{proof}

\subsection{On Sobolev and Nikol'skii spaces}\label{sobolev}
We recall here the notations and properties of Sobolev and Nikol'skii spaces as introduced in \cite{T95,C17}. In the following, let $p\in[1,\infty)$ and $\Omega\subset \R^N$ open. 

\subsubsection{Sobolev spaces} If $k\in \N_0$, we set as usual
$$
W^{k,p}(\Omega):=\Big\{u\in L^p(\Omega)\;:\; \text{ $\partial^{\alpha}u$ exists for all $\alpha\in \N_0^{n}$, $|\alpha|\leq k$ and belongs to $L^p(\Omega)$ }\Big\}
$$
for the Banach space of $k$-times (weakly) differentialable functions in $L^p(\Omega)$. Moreover, as usual, for $\sigma\in(0,1)$, $p\in[1,\infty)$ we set
$$
W^{\sigma,p}(\Omega):=\Big\{u\in L^p(\Omega)\;:\; \frac{u(x)-u(y)}{|x-y|^{\frac{n}{p}+\sigma}}\in L^{p}(\Omega\times \Omega)\Big\}.
$$
With the norm
$$
\|u\|_{W^{\sigma,p}(\Omega)}=\|u\|_{L^p(\Omega)}^p+[u]_{W^{\sigma,p}(\Omega)},\quad\text{
	where}\quad
[u]_{W^{\sigma,p}(\Omega)}=\Bigg(\iint_{\Omega\times \Omega}\frac{|u(x)-u(y)|^p}{|x-y|^{n+\sigma p}}\ dxdy\Bigg)^{1/p}
$$
the space $W^{\sigma,p}(\Omega)$ is a Banach space. For general $s=k+\sigma$, $k\in\N_0$, $\sigma\in[0,1)$ the Sobolev space is defined as
$$
W^{s,p}(\Omega):=\Big\{u\in W^{k,p}(\Omega)\;:\; \partial^{\alpha}u\in W^{\sigma,p}(\Omega)\ \text{for all $\alpha\in \N_0^{n}$ with $|\alpha|= k$ }\Big\}.
$$
Finally, in the particular case $p=2$ the space $H^{s}(\Omega):=W^{s,2}(\Omega)$ is a Hilbert space.

\subsubsection{Nikol'skii spaces} For $u:\Omega\to\R$ and $h\in \R$, let $\Omega_h:=\{x\in \Omega\;:\; \dist(x,\partial\Omega)>h\}$ and, with $e\in \partial B_1(0)$, we let
$$
\delta_hu(x)=\delta_{h,e}u(x):=u(x+he)-u(x).
$$
Moreover, for $l\in \N$, $l>1$ let
$$
\delta_{h}^{l}u(x)=\delta_h(\delta_h^{l-1}u)(x).
$$
For $s=k+\sigma>0$ with $k\in \N_0$ and $\sigma\in(0,1]$ define
$$
N^{s,p}(\Omega):=\Big\{u\in W^{k,p}(\Omega)\;:\; [\partial^{\alpha}u]_{N^{\sigma,p}(\Omega)}<\infty \ \text{for all $\alpha\in \N_0^{n}$ with $|\alpha|= k$ }\Big\}, 
$$
where
$$
[u]_{N^{\sigma,p}(\Omega)}=\sup_{\substack{e\in \partial B_1(0)\\ h>0}}h^{-\sigma}\|\delta_{h,e}^2u\|_{L^p(\Omega_{2h})}.
$$
It follows that $N^{s,p}(\Omega)$ is a Banach space with norm $\|u\|_{N^{s,p}(\Omega)}:=\|u\|_{W^{k,p}(\Omega)}+\sum_{|\alpha|=k}[\partial^{\alpha}u]_{N^{\sigma,p}(\Omega)}$. It can be shown that this norm is equivalent to
$$
\|u\|_{L^p(\Omega)}+\sum_{|\alpha|=k}\sup_{\substack{e\in \partial B_1(0)\\ h>0}}h^{m-\sigma}\|\delta_{h,e}^lu\|_{L^p(\Omega_{lh})}
$$
for any fixed $m,l\in \N_0$ with $m<\sigma$ and $l>\sigma-m$ (see \cite[Theorem 4.4.2.1]{T95}). 
\begin{prop}[see e.g. Propositions 3 and 4 in \cite{C17}]
	\label{prop-sobolev-nikolskii}
	Let $\Omega\subset \R^N$ open and with $C^{\infty}$ boundary. Moreover, let $t>s>0$ and $1\leq p<\infty$. Then
	$$
	N^{t,p}(\Omega)\subset W^{s,p}(\Omega)\subset N^{s,p}(\Omega).
	$$
\end{prop}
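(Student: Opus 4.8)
The plan is to reduce the statement to $\R^N$ by an extension argument, then reduce to fractional order in $(0,1)$ by differentiating out the integer part, and finally to prove two short estimates on $\R^N$ phrased in terms of translations (cf. \cite{C17}).

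First I would use that, since $\partial\Omega$ is $C^\infty$, there is a linear extension operator $E$ bounded on each scale $W^{r,p}(\Omega)\to W^{r,p}(\R^N)$ and $N^{r,p}(\Omega)\to N^{r,p}(\R^N)$, $r>0$ (see \cite{T95}); as restriction to $\Omega$ is trivially bounded and the Gagliardo and Nikol'skii seminorms over $\Omega$ are dominated by those over $\R^N$, it suffices to prove $N^{t,p}(\R^N)\subset W^{s,p}(\R^N)\subset N^{s,p}(\R^N)$. Choosing $s<s'<t$ with $s'\notin\N$ and $\lfloor s'\rfloor=\lfloor s\rfloor$ whenever $s\notin\N$, and using the elementary monotonicity $N^{t,p}(\R^N)\subset N^{s',p}(\R^N)$, I would reduce to $N^{s',p}(\R^N)\subset W^{s,p}(\R^N)\subset N^{s,p}(\R^N)$. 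For $s\in\N$ this is immediate from the definitions together with $W^{m,p}\subset N^{m,p}$, which follows from $\|\delta_{h,e}^2u\|_{L^p}\le 2|h|\,\|\nabla u\|_{L^p}$ after differentiating $m-1$ times; so only the case $s=k+\sigma$, $s'=k+\sigma'$ with $k\in\N_0$ and $0<\sigma<\sigma'<1$ remains, and differentiating $k$ times this becomes
\[
N^{\sigma',p}(\R^N)\subset W^{\sigma,p}(\R^N)\subset N^{\sigma,p}(\R^N),\qquad 0<\sigma<\sigma'<1.
\]
Throughout I would use that for orders in $(0,1)$ the Nikol'skii seminorm can equivalently be computed with first differences, $[u]_{N^{\theta,p}(\R^N)}\simeq\sup_{|e|=1,\,h>0}h^{-\theta}\|\delta_{h,e}u\|_{L^p(\R^N)}$ (\cite[Theorem 4.4.2.1]{T95}), and write $\delta_wu(x):=u(x+w)-u(x)$, so $\|\delta_wu\|_{L^p(\R^N)}^p=\int_{\R^N}|u(x+w)-u(x)|^p\,dx$.

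For $N^{\sigma',p}(\R^N)\subset W^{\sigma,p}(\R^N)$ I would write the Gagliardo seminorm in the difference variable and split at $|w|=1$, using $\|\delta_wu\|_{L^p(\R^N)}\le[u]_{N^{\sigma',p}(\R^N)}|w|^{\sigma'}$ for $|w|\le1$ and $\|\delta_wu\|_{L^p(\R^N)}\le 2\|u\|_{L^p(\R^N)}$ for $|w|>1$:
\[
[u]_{W^{\sigma,p}(\R^N)}^p=\int_{\R^N}\frac{\|\delta_wu\|_{L^p(\R^N)}^p}{|w|^{N+\sigma p}}\,dw\le[u]_{N^{\sigma',p}(\R^N)}^p\int_{|w|\le1}|w|^{(\sigma'-\sigma)p-N}\,dw+2^p\|u\|_{L^p(\R^N)}^p\int_{|w|>1}|w|^{-N-\sigma p}\,dw,
\]
where both integrals converge since $(\sigma'-\sigma)p>0$ and $\sigma p>0$. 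For the reverse inclusion $W^{\sigma,p}(\R^N)\subset N^{\sigma,p}(\R^N)$ I would fix $h\ne0$, set $r:=|h|$, and use the averaging identity
\[
u(x+h)-u(x)=\frac{1}{|B_r(0)|}\int_{B_r(0)}\big(u(x+h)-u(x+w)\big)\,dw+\frac{1}{|B_r(0)|}\int_{B_r(0)}\big(u(x+w)-u(x)\big)\,dw;
\]
recognizing $u(x+h)-u(x+w)=\delta_{h-w}u(x+w)$, Minkowski's integral inequality in $L^p(dx)$, translation invariance of $\|\cdot\|_{L^p}$, and the substitution $v=h-w\in B_{2r}(0)$ give $\|\delta_hu\|_{L^p(\R^N)}\le|B_r(0)|^{-1}\int_{B_{2r}(0)}\|\delta_vu\|_{L^p(\R^N)}\,dv+|B_r(0)|^{-1}\int_{B_r(0)}\|\delta_wu\|_{L^p(\R^N)}\,dw$. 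Then Hölder's inequality with weight $|v|^{N+\sigma p}$ (trivial for $p=1$), together with $\int_{\R^N}|v|^{-N-\sigma p}\|\delta_vu\|_{L^p(\R^N)}^p\,dv=[u]_{W^{\sigma,p}(\R^N)}^p$, yields $\int_{B_{2r}(0)}\|\delta_vu\|_{L^p(\R^N)}\,dv\le C[u]_{W^{\sigma,p}(\R^N)}\,r^{N+\sigma}$ after elementary exponent bookkeeping, and dividing by $|B_r(0)|=\omega_Nr^N$ gives $\|\delta_hu\|_{L^p(\R^N)}\le C|h|^\sigma[u]_{W^{\sigma,p}(\R^N)}$ for all $h$, i.e. $u\in N^{\sigma,p}(\R^N)$.

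The genuinely delicate point is the first reduction: one cannot argue directly on $\Omega$, since the Gagliardo double integral over $\Omega\times\Omega$ feels a boundary layer $\Omega\setminus\Omega_h$ on which $u$ is only controlled in $L^p$ with no rate in $h$, so the two estimates above do not close there. This is exactly where the $C^\infty$ regularity of $\partial\Omega$ (in fact Lipschitz would suffice) enters: it furnishes the bounded extension operator to $\R^N$. Once one is on $\R^N$ the arguments are elementary; the only remaining technical input is the equivalence, for orders in $(0,1)$, of the second-difference and first-difference Nikol'skii seminorms, which is the quoted theorem from \cite{T95}.
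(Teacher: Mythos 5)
The paper does not give its own proof here; it simply cites Propositions 3 and 4 of Cozzi \cite{C17}, which in turn rest on the extension and difference-characterization theory in Triebel \cite{T95}. Your argument is correct and is essentially the standard reconstruction of that proof: extend from $\Omega$ to $\R^N$ using the smooth (Lipschitz would do) boundary, reduce to fractional order $\sigma\in(0,1)$ by differentiating off the integer part, and then compare the first-difference Nikol'skii seminorm with the Gagliardo seminorm via Fubini in the translation variable (one direction) and the ball-averaging identity (the other). Two small points worth tidying. First, the ``elementary monotonicity'' $N^{t,p}(\R^N)\subset N^{s',p}(\R^N)$ you invoke is only immediate when $\lfloor t\rfloor=\lfloor s'\rfloor$; when the integer parts differ you need the additional observation that $\partial^\beta u\in W^{1,p}$ for $|\beta|=\lfloor s'\rfloor$, after which the bound $\|\delta_{h,e}^2 v\|_{L^p}\le 2h\|\nabla v\|_{L^p}$ (already used in your integer-$s$ case) closes the estimate for $h\le 1$, and the trivial $\|\delta_{h,e}^2 v\|_{L^p}\le 4\|v\|_{L^p}$ handles $h>1$. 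Second, the equivalence of the first- and second-difference Nikol'skii seminorms holds only at the level of the full norms $\|u\|_{L^p}+[\,\cdot\,]$, not the seminorms alone, so the right-hand sides of your two displayed estimates should carry $\|u\|_{N^{\sigma',p}}$ and $\|u\|_{W^{\sigma,p}}$ rather than bare seminorms; this does not affect the inclusions.
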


\section{Density results and Maximum principles}\label{sec:dense}

The main goal of this section is to show the following.
\begin{thm}\label{main-thm-density}
	Let either $\Omega=\R^N$ or $\Omega\subset \R^N$ open and bounded with Lipschitz boundary. In the following, let $X(\Omega):=\cD^k(\Omega)$ or $D^k(\Omega)$. Then $C^{\infty}_c(\Omega)$ is dense in $X(\Omega)$. Moreover, if $u\in X(\Omega)$ is nonnegative, then we have
	\begin{enumerate}
		\item There exists a sequence $(u_n)_n\subset X(\Omega)\cap L^{\infty}(\Omega)$ with $\lim\limits_{n\to\infty}u_n= u$ in $X(\Omega)$ satisfying that for every $n\in \N$ there is $\Omega'_n\subset\subset\Omega$ with $u_n=0$ on $\Omega\setminus \Omega'_n$ and $0\leq u_n\leq u_{n+1}\leq u$.
		\item There exists a sequence $(u_n)_n\subset C^{\infty}_c(\Omega)$ with $u_n\geq0$ for every $n\in \N$ and $\lim\limits_{n\to\infty}u_n= u$ in $X(\Omega)$.	
	\end{enumerate}
\end{thm}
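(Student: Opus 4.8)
The plan is to obtain all the approximations by a chain of three operations applied to $u$: truncation to a bounded function, multiplication by cut-offs to push the support into a compact subset of $\Omega$, and finally mollification. Statements (1) and (2) will fall out of the first two (resp. all three) steps performed on a nonnegative $u$, and the density of $C_c^\infty(\Omega)$ in $X(\Omega)$ will then follow by applying (2) to $u^+$ and $u^-$ (which lie in $X(\Omega)$ by Lemma \ref{properties1}(2)). \emph{Step 1 (truncation).} Setting $u_M:=\min\{u^+,M\}-\min\{u^-,M\}$, Lemma \ref{properties1}(3) gives $u_M\in X(\Omega)\cap L^\infty(\Omega)$, and since the truncation map is $1$-Lipschitz one has $|(u-u_M)(x)-(u-u_M)(y)|\le|u(x)-u(y)|$, which tends to $0$ a.e.; dominated convergence (with dominating functions $|u(x)-u(y)|^2k(x,y)$, and $u(x)^2\kappa_{k,\Omega}(x)$ for the exterior term $K_{k,\Omega}$ in the $\cD^k$ case) then yields $u_M\to u$ in $X(\Omega)$. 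For $u\ge0$ one takes $u_M=\min\{u,M\}$, which also satisfies $0\le u_M\le u_{M+1}\le u$.

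\emph{Step 2 (confining the support).} For bounded $u$ I would multiply by cut-offs $\eta_n$. If $\Omega=\R^N$, take $\eta_n\in C_c^{0,1}(\R^N)$ with $\eta_n\equiv1$ on $B_n$, $\supp\eta_n\subset B_{2n}$, $|\nabla\eta_n|\le C/n$; Lemma \ref{properties1}(5) gives $\eta_n u\in\cD^k(B_{2n})$, and from
\[
\big((1-\eta_n)u(x)-(1-\eta_n)u(y)\big)^2\le 2(1-\eta_n(x))^2(u(x)-u(y))^2+2(\eta_n(x)-\eta_n(y))^2u(y)^2
\]
one gets $b_k((1-\eta_n)u)\to0$: the first term by dominated convergence; the second split into $\{|x-y|<1\}$ (where $(\eta_n(x)-\eta_n(y))^2\le Cn^{-2}|x-y|^2\le Cn^{-2}|x-y|^\sigma$, controlled via $\sup_x\int\min\{1,|x-y|^\sigma\}k(x,y)\,dy<\infty$ from \eqref{assumptions}) and $\{|x-y|\ge1\}$ (where $(\eta_n(x)-\eta_n(y))^2\le2(1-\eta_n(x)\eta_n(y))$ is supported off $B_n$ in one variable, and one concludes by dominated convergence using $\sup_x\int_{|x-y|\ge1}k(x,y)\,dy<\infty$). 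If $\Omega$ is bounded with Lipschitz boundary, take $\eta_n\in C_c^{0,1}(\Omega)$, $0\le\eta_n\le\eta_{n+1}\le1$, $\eta_n\equiv1$ on $\{\dist(\cdot,\partial\Omega)>2/n\}$, $\supp\eta_n\subset\{\dist(\cdot,\partial\Omega)>1/n\}=:\Omega_n'\subset\subset\Omega$, $|\nabla\eta_n|\le Cn$; in the $\cD^k$ case $b_k((1-\eta_n)u)=b_{k,\Omega}((1-\eta_n)u)+K_{k,\Omega}((1-\eta_n)u)$, the exterior term is $\le K_{k,\Omega}(u)<\infty$ and vanishes by dominated convergence, while the same decomposition for $b_{k,\Omega}((1-\eta_n)u)$ produces in the near-diagonal regime quantities bounded by $Cn^{2-\sigma}\int_{\{\dist(\cdot,\partial\Omega)<C/n\}}u^2$ and $C\int_{\{\dist(\cdot,\partial\Omega)<C/n\}}u^2\dist(\cdot,\partial\Omega)^{-\sigma}$, both $\to0$ because $u\in L^\infty$, because $|\{\dist(\cdot,\partial\Omega)<\delta\}\cap\Omega|\lesssim\delta$ and $\int_\Omega\dist(\cdot,\partial\Omega)^{-\sigma}<\infty$ on a bounded Lipschitz domain, and because $\sigma<1$ (the $D^k$ case being the same without the exterior term). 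In particular, for $u\ge0$ bounded, $u_n:=\eta_n\min\{u,n\}$ gives statement (1): $u_n\in X(\Omega)\cap L^\infty(\Omega)$, $u_n=0$ on $\Omega\setminus\Omega_n'$, $0\le u_n\le u_{n+1}\le u$ and $u_n\to u$ in $X(\Omega)$.

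\emph{Step 3 (mollification).} Finally I would approximate $w\in X(\Omega)$ with $\supp w\subset\subset\Omega$ by $C_c^\infty(\Omega)$-functions. For $w\in C_c^{0,1}(\Omega)$ this is routine: $w*\rho_\epsilon\in C_c^\infty(\Omega)$ for small $\epsilon$, and $|w*\rho_\epsilon(x)-w*\rho_\epsilon(y)-w(x)+w(y)|\le2\,\mathrm{Lip}(w)\min\{\epsilon,|x-y|\}$ together with the difference having support in a fixed compact subset of $\Omega$ gives $b_k(w*\rho_\epsilon-w)\le C\epsilon^{2-\sigma}\to0$ by splitting at $|x-y|=\epsilon$ and using \eqref{assumptions} (here only $\sigma<2$ is needed). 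So it is enough to approximate the functions from Step 2 by Lipschitz functions with compact support in $\Omega$; combined with Steps 1--2 and a suitable choice $\epsilon_n\to0$ this yields, for $u\ge0$, a sequence $u_n\in C_c^\infty(\Omega)$ with $u_n\ge0$ and $u_n\to u$ in $X(\Omega)$, hence statement (2), and, via $u=u^+-u^-$, the density of $C_c^\infty(\Omega)$ in $X(\Omega)$.

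The hard part will be, first, the near-boundary estimate in Step 2 for a bounded Lipschitz domain: it is a nonlocal Hardy-type bound, and this is exactly where the smallness hypothesis $\sigma<1$ enters (for $\sigma\ge1$ the integral $\int_\Omega\dist(\cdot,\partial\Omega)^{-\sigma}$ diverges and $C_c^\infty(\Omega)$ fails to be dense in general). Second, in Step 3, mollifying the merely bounded functions from (1): since the kernel $k$ is not translation invariant one cannot simply invoke continuity of translations in $b_k$, so one must localise and exploit that \eqref{assumptions} holds \emph{uniformly}, hence also for the translated kernels $k(\cdot+z,\cdot+z)$ with the same constants — this is the technical core of Step 3.
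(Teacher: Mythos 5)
Your overall architecture (truncate to $L^\infty$, confine the support with cut-offs, mollify) mirrors the paper's own proof, which assembles the same chain from Lemma~\ref{density-pre1}, Theorems~\ref{density2} and \ref{density3}, and Proposition~\ref{density1}. The one genuinely different choice is that you truncate \emph{before} the boundary cut-off, which lets you replace the paper's Lebesgue differentiation argument on $\partial\Omega$ with surface measure (valid for arbitrary $u\in L^2$) by the cruder pair of facts $|A_\delta|\lesssim\delta$ and $\|u\|_\infty<\infty$; both routes require $\sigma<1$ (the paper's proof uses this implicitly through $r^{1-\epsilon}\to0$), and you correctly flag this. Step~1 and the $\Omega=\R^N$ part of Step~2 are fine and essentially reproduce Lemma~\ref{density-pre1}.

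Two issues. First, a concrete error in the Lipschitz-boundary estimate of Step~2: the near-diagonal contribution should be of size $n^{\sigma}\int_{A_{C/n}}u^2$, not $n^{2-\sigma}\int_{A_{C/n}}u^2$. Indeed $(\eta_n(x)-\eta_n(y))^2\le Cn^2|x-y|^2$ and, for $|x-y|<1/n$, $|x-y|^2\le n^{-(2-\sigma)}|x-y|^\sigma$, so the inner kernel integral contributes $n^2\cdot n^{-(2-\sigma)}=n^\sigma$; combined with $|A_{C/n}|\lesssim n^{-1}$ one gets $n^{\sigma-1}\to0$, which indeed needs $\sigma<1$ as you say. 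With your exponent $n^{2-\sigma}$ the product is $n^{1-\sigma}$, which \emph{diverges} precisely for $\sigma<1$, so as written the argument fails; this is a power slip but it inverts the conclusion. Second, and more substantially, Step~3 has a gap. The sentence ``it is enough to approximate the functions from Step~2 by Lipschitz functions with compact support'' is circular: the outputs of Step~2 are merely bounded compactly supported elements of $X(\Omega)$, and producing a Lipschitz approximation of such a function \emph{in the $b_k$-seminorm} is exactly the mollification result you are trying to prove. You then correctly identify the real task (mollify bounded, compactly supported $w$ directly) and the real obstacle (non-translation-invariance of $k$) together with the right remedy (uniformity of \eqref{assumptions} in $x$), but this remains a pointer rather than an argument. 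The paper closes this step by invoking \cite[Proposition~4.1]{JW19}; without that reference or a written-out estimate, Step~3 is incomplete.
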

\begin{rem}
	To put Theorem \ref{main-thm-density} into perspective, we consider the following examples.
	\begin{enumerate}
		\item In the case $k(x,y)=|x-y|^{-2s-N}$ for some $s\in(0,\frac{1}{2})$, the above Theorem is well-known and leads to the interesting property that for any open, bounded Lipschitz set $\Omega\subset \R^N$ we have
		$$
		D^k(\Omega)=H^s(\Omega)=H^s_0(\Omega).
		$$
		We emphasize that the above equality also holds for $s=\frac{1}{2}$. Moreover, if $s<\frac{1}{2}$, it also holds $H^s(\Omega)=\{u\in H^s(\R^N)\;:\; 1_{\R^N\setminus\Omega}u\equiv 0\}$.
		\item If $k(x,y)=1_{B_1(0)}(x-y)|x-y|^{-N}$, $D^k(\Omega)$ is associated to the function space of the \textit{localized} logarithmic Laplacian (see \cite{CW18}).
	\end{enumerate}
\end{rem}
The proof is split into several smaller steps. Recall that $\cD^k(\R^N)=D^k(\R^N)$ by definition.

\begin{lemma}
\label{density-pre1}
	Let $u\in D^k(\R^N)$. Then there is a sequence $(u_n)_n\subset D^k(\R^N)$ with $\lim\limits_{n\to\infty}u_n= u$ in $D^k(\R^N)$ satisfying that for every $n\in \N$ there is $\Omega_n\subset\subset\R^N$ with $u_n=0$ on $\R^N\setminus \Omega_n$. Moreover, if $u\geq 0$, then $(u_n)_n$ can be chosen to satisfy in addition $0\leq u_n\leq u_{n+1}\leq u$.
\end{lemma}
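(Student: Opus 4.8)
The plan is a truncation argument. I would fix once and for all a radial cut-off $\psi\in C^{0,1}_c(\R^N)$ with $0\le\psi\le 1$, $\psi\equiv 1$ on $B$, $\supp\psi\subset B_2(0)$, and with $\psi(z)$ non-increasing in $|z|$, and then set $\eta_n(x):=\psi(x/n)$ and $u_n:=\eta_n u$. By construction $u_n$ vanishes outside $\Omega_n:=B_{2n}(0)\subset\subset\R^N$, and Lemma \ref{properties1}(5) (together with $\cD^k(\R^N)=D^k(\R^N)$) gives $u_n\in D^k(\R^N)$. If $u\ge 0$, the monotonicity of $\psi$ in $|z|$ makes $n\mapsto\eta_n$ pointwise non-decreasing with values in $[0,1]$, so that $0\le u_n\le u_{n+1}\le u$ is automatic; hence the only thing left to prove is $u_n\to u$ in $D^k(\R^N)$.

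The $L^2$-part is immediate: $|u_n-u|=(1-\eta_n)|u|\le|u|$ and $\eta_n\to1$ pointwise, so $\|u_n-u\|_{L^2(\R^N)}\to0$ by dominated convergence. For the bilinear form I would set $v_n:=u-u_n=(1-\eta_n)u$ and use the algebraic identity
\[
v_n(x)-v_n(y)=(1-\eta_n(x))\,(u(x)-u(y))-(\eta_n(x)-\eta_n(y))\,u(y),
\]
together with $(a+b)^2\le 2a^2+2b^2$ and the symmetry of $k$; this bounds $b_{k,\R^N}(u_n-u)$ by a constant multiple of the sum of
\[
\iint_{\R^N\times\R^N}(1-\eta_n(x))^2(u(x)-u(y))^2\,k(x,y)\,dx\,dy
\]
and
\[
\iint_{\R^N\times\R^N}(\eta_n(x)-\eta_n(y))^2\,u(x)^2\,k(x,y)\,dx\,dy,
\]
so it suffices to send each of these two double integrals to $0$.

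Both would be dispatched by dominated convergence on $\R^N\times\R^N$. For the first integral the integrand is dominated by the fixed integrable function $(u(x)-u(y))^2k(x,y)$ --- integrable since $b_{k,\R^N}(u)<\infty$ --- and tends to $0$ pointwise because $(1-\eta_n(x))^2\to0$ for each $x$. For the second, the point is that $\eta_n$ is Lipschitz with constant at most $L/n$, where $L$ is the Lipschitz constant of $\psi$, whence
\[
(\eta_n(x)-\eta_n(y))^2\le\min\{4,(L/n)^2|x-y|^2\}\le\min\{4,L^2|x-y|^2\}\le C\min\{1,|x-y|^\sigma\},\quad C:=\max\{4,L^2\},
\]
the last inequality using $\sigma\le 2$ (so that $|x-y|^2\le|x-y|^\sigma$ for $|x-y|\le1$). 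Thus the integrand is dominated by $C\,u(x)^2\min\{1,|x-y|^\sigma\}k(x,y)$, whose double integral is finite: integrating first in $y$ and invoking \eqref{assumptions} leaves $\le C\,M\|u\|_{L^2(\R^N)}^2$ with $M:=\sup_{x}\int_{\R^N}\min\{1,|x-y|^\sigma\}k(x,y)\,dy<\infty$. Pointwise the integrand vanishes, since $(\eta_n(x)-\eta_n(y))^2=0$ as soon as $n>\max\{|x|,|y|\}$ (both points then lie in $B_n$, where $\eta_n\equiv1$). Dominated convergence then gives $b_{k,\R^N}(u_n-u)\to0$, which finishes the argument. The only genuinely delicate step is building the $n$-independent integrable majorant for the second double integral; this is the one place where the scaling of the Lipschitz constant of $\eta_n$ must be balanced against the singularity of $k$, and where the $\min\{1,|x-y|^\sigma\}$-integrability from \eqref{assumptions} and the restriction $\sigma\le2$ genuinely interact. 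Everything else is routine bookkeeping.
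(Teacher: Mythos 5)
Your proof is correct and is essentially the same as the paper's: both truncate by a radial cut-off equal to $1$ on $B_n(0)$ and supported in a larger ball, split the difference quotient into a $(u(x)-u(y))$-term and an $(\eta_n(x)-\eta_n(y))$-term, bound the latter by the uniform Lipschitz estimate $|\eta_n(x)-\eta_n(y)|\lesssim\min\{1,|x-y|\}$, and conclude by dominated convergence using $b_{k}(u)<\infty$ and \eqref{assumptions}. One small remark: the "delicate scaling" you flag at the end is not actually needed --- the paper takes $\phi_n\equiv1$ on $B_n$, $\phi_n\equiv0$ off $B_{n+1}$, with Lipschitz constant fixed at $1$ rather than decaying like $1/n$, and the same dominated-convergence argument goes through; what matters is only that $|\eta_n(x)-\eta_n(y)|\le\min\{1,|x-y|\}$ uniformly in $n$, not that the Lipschitz constant shrinks.
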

\begin{proof}
	For $n\in \N$ let $\phi_n\in C^{0,1}_c(\R^N)$ be radially symmetric and such that $\phi_n\equiv 1$ on $B_n(0)$, $\phi_n\equiv 0$ on $B_{n+1}(0)^c$. Clearly, we may assume that $[\phi_n]_{C^{0,1}(\R^N)}=1$. By Lemma \ref{properties1} there is hence some  $C=C(N,k)>0$ with $b_{k,\R^N}(\phi_n u)\leq C\|u\|_{D^k(\R^N)}$ for all $n\in \N$. In the following, let $u_n:=\phi_nu$ and without loss of generality we may assume $u\geq 0$. Since then $0\leq u-u_n\leq u$ on $\R^N$ and $u-u_n=0$ on $B_n$, by dominated convergence we have $\lim\limits_{n\to\infty}\|u-u_n\|_2=0$. Moreover, by choice of $\phi_n$ we have for $x,y\in \R^N$
	\begin{align*}
	|u(x)(1-\phi_n(x))-u(y)(1-\phi_n(y))|&\leq |u(x)-u(y)|(1-\phi_n(x)) +|u(y)||\phi_n(x)-\phi_n(y)|\\
	&\leq |u(x)-u(y)| +|u(y)|\min\{1,|x-y|\}=:U(x,y).
	\end{align*}
	Here, $U(x,y)\in L^2(\R^N\times \R^N,k(x,y)\ d(x,y))$, since
	\begin{align*}
	\iint_{\R^N\times\R^N} U(x,y)k(x,y)\ dxdy&=b_{k,\R^N}(u)+\int_{\R^N}|u(y)|^2\int_{\R^N} \min\{1,|x-y|^{2}\}k(x,y)\ dxdy\\
	&\leq b_{k,\R^N}(u)+\int_{\R^N}|u(y)|^2\ dy\,\supp_{x\in \R^N}\int_{\R^N} \min\{1,|x-y|^{2}\}k(x,y)dy<\infty.
	\end{align*}
	Thus $\lim\limits_{n\to\infty}b_{k,\R^N}(u-u_n)=0$ by the dominated convergence Theorem.
\end{proof}

\begin{prop}
\label{density1}
We have that $C^{\infty}_c(\R^N)$ is dense in $D^k(\R^N)$. Moreover, if $u\in D^k(\R^N)$ is nonnegative, then there exists $(\phi_n)_n\subset C^{\infty}_c(\R^N)$ with $\phi_n\geq0$ for every $n\in \N$ and $\lim\limits_{n\to\infty}\phi_n= u$ in $D^k(\R^N)$.
\end{prop}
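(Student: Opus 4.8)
The plan is to produce the approximating sequence by the classical scheme ``truncate, then mollify'', reducing the problem in two steps to the easy case of a bounded, compactly supported function. First, by Lemma~\ref{density-pre1} every $u\in D^k(\R^N)$ is the $D^k(\R^N)$-limit of functions $u_n\in D^k(\R^N)$ with compact support, and if $u\ge0$ one may keep $0\le u_n\le u$; by a diagonal argument it therefore suffices to approximate a compactly supported $u\in D^k(\R^N)$ (nonnegative in the second assertion, and then by nonnegative functions). Next, for $M>0$ let $g_M(t):=\max\{-M,\min\{t,M\}\}$, which is $1$-Lipschitz with $g_M(0)=0$; Lemma~\ref{properties1}(3) gives $g_M\circ u\in D^k(\R^N)$, with support contained in $\supp u$, and $g_M\circ u\ge0$ whenever $u\ge0$. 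Since $\big((g_M\circ u-u)(x)-(g_M\circ u-u)(y)\big)^2\le (u(x)-u(y))^2$ pointwise and $(g_M\circ u)(x)\to u(x)$ for a.e.\ $x$ as $M\to\infty$, dominated convergence --- the majorant $(u(x)-u(y))^2$ being integrable against $k(x,y)\,dx\,dy$ because $b_k(u)<\infty$ --- yields $g_M\circ u\to u$ in $D^k(\R^N)$. Hence we may and do assume in addition that $u\in L^\infty(\R^N)$; say $\supp u\subset B_R$ and $\|u\|_\infty\le1$.

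Now fix $\rho\in C^\infty_c(B_1(0))$ with $\rho\ge0$ and $\int\rho=1$, set $\rho_\eps(z):=\eps^{-N}\rho(z/\eps)$ and $u_\eps:=\rho_\eps\ast u\in C^\infty_c(B_{R+1})$; clearly $u_\eps\ge0$ when $u\ge0$, and $u_\eps\to u$ in $L^2(\R^N)$. It remains to prove $b_k(u_\eps-u)\to0$ as $\eps\to0^+$. From $(u_\eps-u)(x)-(u_\eps-u)(y)=\int\rho_\eps(z)\big[(u(x{-}z){-}u(x))-(u(y{-}z){-}u(y))\big]\,dz$, Jensen's inequality and Fubini give, with $\tau_zu:=u(\cdot-z)$,
$$
b_k(u_\eps-u)\ \le\ \int_{\R^N}\rho_\eps(z)\,\Theta(z)\,dz,\qquad \Theta(z):=\tfrac12\!\int_{\R^N}\!\int_{\R^N}\!\big((\tau_zu-u)(x)-(\tau_zu-u)(y)\big)^2 k(x,y)\,dx\,dy.
$$
For $\delta\in(0,1)$ split $\Theta(z)=\Theta^{\mathrm{far}}_\delta(z)+\Theta^{\mathrm{near}}_\delta(z)$ according to $|x-y|\ge\delta$ and $|x-y|<\delta$. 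On the far part one bounds the integrand by $2\big((\tau_zu-u)(x)^2+(\tau_zu-u)(y)^2\big)$ and uses $\sup_x\int_{|x-y|\ge\delta}k(x,y)\,dy<\infty$, a consequence of \eqref{assumptions}, to get $\Theta^{\mathrm{far}}_\delta(z)\le C_\delta\,\|\tau_zu-u\|_{L^2}^2$; since this tends to $0$ as $z\to0$ and is bounded by $4C_\delta\|u\|_{L^2}^2$, while $\rho_\eps$ is supported in the fixed ball $B_1(0)$, dominated convergence gives $\int\rho_\eps(z)\Theta^{\mathrm{far}}_\delta(z)\,dz\to0$ as $\eps\to0$.

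\emph{The main obstacle} is the near-diagonal part. Because $k$ is not translation invariant, $b_k(\tau_zu)$ may fail to be finite for $z\ne0$, so one cannot simply dominate $\Theta^{\mathrm{near}}_\delta$ by $b_k(\tau_zu-u)$. The plan is to use the elementary bound $\big((\tau_zu-u)(x)-(\tau_zu-u)(y)\big)^2\le 2(u(x{-}z){-}u(y{-}z))^2+2(u(x){-}u(y))^2$: the second term contributes $2\iint_{|x-y|<\delta}(u(x)-u(y))^2k(x,y)\,dx\,dy=:2A_\delta$, which is finite since $b_k(u)<\infty$ and satisfies $A_\delta\to0$ as $\delta\to0$ uniformly in $z$; for the first term, after the substitution $(x,y)\mapsto(x-z,y-z)$ --- under which $\{|x-y|<\delta\}$ is invariant --- and Fubini in $z$, one is left with $2\iint_{|x-y|<\delta}(u(x)-u(y))^2 k_\eps(x,y)\,dx\,dy$, where $k_\eps(x,y):=\int\rho_\eps(z)k(x+z,y+z)\,dz$ satisfies $\sup_x\int k_\eps(x,y)\min\{1,|x-y|^\sigma\}\,dy\le\sup_x\int k(x,y)\min\{1,|x-y|^\sigma\}\,dy$ uniformly in $\eps$, again by \eqref{assumptions}. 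Showing that this last term is small for small $\delta$, uniformly in $\eps$ --- combining the near-diagonal smallness coming from $b_k(u)<\infty$ with the uniform integrability of the diagonally mollified kernels $k_\eps$ --- is the technical heart of the proof. Once it is established, choosing first $\delta$ small and then $\eps$ small gives $b_k(u_\eps-u)\to0$, which proves density; and since each reduction as well as the mollification preserves nonnegativity, the statement for nonnegative $u$ follows at the same time.
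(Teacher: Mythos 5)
Your overall strategy — reduce to a compactly supported function via Lemma~\ref{density-pre1} and then mollify — is exactly the paper's. The interpolated $L^\infty$-truncation via $g_M$ is harmless but doesn't actually buy you anything at the hard step: an $L^\infty$ bound $(u(x)-u(y))^2\le 4\|u\|_\infty^2$ cannot be paired with $\int_{B_\delta(x)}k(x,y)\,dy$, since the latter is typically \emph{infinite} (that is the whole point of a singular kernel), so the only usable control remains $b_k(u)<\infty$.

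The genuine gap is exactly where you flag it. After the change of variables you reduce to controlling
\[
\int\rho_\eps(z)\iint_{|x-y|<\delta}(u(x)-u(y))^2\,k(x+z,y+z)\,dx\,dy\,dz
\]
uniformly in $\eps$, and you then write that making this small ``is the technical heart of the proof'' without proving it. This is not a routine verification: the hypothesis $b_k(u)<\infty$ only gives smallness of the near-diagonal integral against the \emph{untranslated} kernel $k(x,y)$, and assumption~\eqref{assumptions} only controls $\sup_x\int\min\{1,|x-y|^\sigma\}k(x,y)\,dy$, which neither bounds $\int_{B_\delta(x)}k(x,y)\,dy$ nor compares $k(x+z,y+z)$ to $k(x,y)$. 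So as written your argument does not close. The paper handles precisely this step by invoking the estimates in \cite[Proposition~4.1]{JW19}, with the remark that the argument there does not in fact use translation invariance of $k$; if you want a self-contained proof you need to carry out the argument of that proposition (or produce a comparable estimate showing that the shifted near-diagonal energies $\iint_{|x-y|<\delta}(u(x)-u(y))^2k(x+z,y+z)\,dx\,dy$ can be made small uniformly for $|z|$ small). Until then, the proposal is an accurate outline of the strategy but not a complete proof.
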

\begin{proof}
Let $u\in D^k(\R^N)$. Moreover, let $\phi_n\in C^{0,1}_c(\R^N)$ for $n\in \N$ be given by Lemma \ref{density-pre1} such that $\|u-\phi_n u\|_{s,p}<\frac{1}{n}$. Then $v_n:=\phi_n u\in D^k(\R^N)$ and there is $R_n>0$ with $v_n\equiv 0$ on $\R^N\setminus B_{R_n}(0)$. Next, let $(\rho_\eps)_{\eps\in(0,1]}$ by a Dirac sequence and denote $v_{n,\eps}:=\rho_\eps\ast v_n$. Then $v_{n\eps}\in C^{\infty}_c(\R^N)$ for all $n\in \N$, $\eps\in(0,1]$ and
	\[
	b_{k,\R^N}(u-v_{n,\eps})\leq b_{k,\R^N}(u-v_n)+ b_{k,\R^N}(v_n-v_{n,\eps})\leq \frac{1}{n}+ b_{k,\R^N}(v_n-v_{n,\eps}).
	\]
	It is hence enough to show that $v_{n,\eps}\to v_n $ in $D^k(\R^N)$ for $\eps\to 0$. In the following, we write $v$ in place of $v_n$ and $v_\eps=\rho_\eps\ast v$ in place of $v_{n,\eps}$ for $\eps\in(0,1]$. Moreover, let $R=R_n>0$ with $v=v_n=0$ on $\R^N\setminus B_R(0)$. Clearly, $v_{\eps}\to v$ in $L^2(\R^N)$ for $\eps\to 0$ and this convergence is also pointwise almost everywhere. Hence it is enough to analyze the convergence of $b_{k,\R^N}(v-v_{\eps})$ as $\epsilon\to 0$. From here, the proof follows along the lines of \cite[Proposition 4.1]{JW19} noting that there it is not used that $k$ only depends on the difference of $x$ and $y$. Note here, that if $u$ is nonnegative then the above constructed sequence is also nonnegative.
\end{proof}

\begin{lemma}
\label{consequences of assumption2}
Let $\Omega\subset \R^N$ open and such that $\partial\Omega$ is bounded. Denote $\delta(x):=\dist(x,\R^N\setminus \Omega)$. Then the following is true.
\begin{enumerate}
\item There is $C=C(N,\Omega,k)>0$ such that $\kappa_{k,\Omega}(x)\leq C\delta^{-\sigma}(x)\ $ for $x\in \Omega$.
\item If $\Omega$ is bounded, then $1_{\Omega}\in D^k(\R^N)$.
\end{enumerate}
\end{lemma}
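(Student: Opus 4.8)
The plan is to establish part (1), which is the substance, and then to deduce part (2) from it by a one-line computation of $b_{k,\R^N}(1_\Omega)$. For part (1), fix $x\in\Omega$ and set $\delta:=\delta(x)=\dist(x,\R^N\setminus\Omega)>0$. The key geometric fact is that every $y\in\R^N\setminus\Omega$ satisfies $|x-y|\ge\delta$, i.e. $\R^N\setminus\Omega\subset\R^N\setminus B_\delta(x)$, so that
\[
\kappa_{k,\Omega}(x)=\int_{\R^N\setminus\Omega}k(x,y)\,dy\le\int_{\{|x-y|\ge\delta\}}k(x,y)\,dy.
\]
Writing $C_0:=\sup_{z\in\R^N}\int_{\R^N}\min\{1,|z-y|^{\sigma}\}k(z,y)\,dy$, which is finite by \eqref{assumptions}, I would split this integral at radius $1$: on the annulus $\{\delta\le|x-y|<1\}$ (empty if $\delta\ge 1$) one has $1\le\delta^{-\sigma}|x-y|^{\sigma}=\delta^{-\sigma}\min\{1,|x-y|^{\sigma}\}$, so that piece is at most $C_0\delta^{-\sigma}$; on the tail $\{|x-y|\ge 1\}$ one has $\min\{1,|x-y|^{\sigma}\}=1$, so that piece is at most $C_0$. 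Hence $\kappa_{k,\Omega}(x)\le C_0(1+\delta^{-\sigma})$, which is $\le 2C_0\delta^{-\sigma}$ as soon as $\delta\le 1$; for the remaining $x$ one uses that $\delta$ stays bounded on $\Omega$ — e.g. $\delta\le\diam\Omega$ when $\Omega$ is bounded — so there $\kappa_{k,\Omega}(x)\le 2C_0\le 2C_0(\diam\Omega)^{\sigma}\delta^{-\sigma}$ as well. This gives the asserted bound with $C=C(N,\Omega,k)$.

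For part (2), assume $\Omega$ is bounded, so that $1_\Omega\in L^2(\R^N)$ and only $b_{k,\R^N}(1_\Omega)<\infty$ needs to be checked. Since $(1_\Omega(x)-1_\Omega(y))^2=1$ precisely when exactly one of $x,y$ lies in $\Omega$, the symmetry of $k$ gives
\[
b_{k,\R^N}(1_\Omega)=\int_\Omega\int_{\R^N\setminus\Omega}k(x,y)\,dy\,dx=\int_\Omega\kappa_{k,\Omega}(x)\,dx,
\]
and part (1) bounds the right-hand side by $C\int_\Omega\delta(x)^{-\sigma}\,dx$, which is finite because $\Omega$ is bounded (the only possible blow-up of $\delta^{-\sigma}$ is near $\partial\Omega$, where integrability is furnished by the boundedness/regularity of the boundary). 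Hence $1_\Omega\in D^k(\R^N)$.

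The splitting in part (1) is routine bookkeeping; the point that needs real care is the region where $\delta$ is large. For bounded $\Omega$ this is harmless as indicated, but if $\Omega$ is unbounded with $\partial\Omega$ merely bounded one can no longer control $\delta$, and one then needs a genuine decay estimate $\int_{\{|x-y|\ge\delta\}}k(x,y)\,dy\lesssim\delta^{-\sigma}$, which is not implied by \eqref{assumptions} alone. The other quantitative ingredient — finiteness of $\int_\Omega\delta^{-\sigma}$ used in part (2) — is where some regularity of $\partial\Omega$ (and, for it to help, $\sigma<1$ as in the present small-order setting) enters.
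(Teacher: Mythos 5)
Your argument is correct and follows essentially the same strategy as the paper's: both exploit $\R^N\setminus\Omega\subset\R^N\setminus B_{\delta(x)}(x)$, split the tail integral at unit radius, and invoke \eqref{assumptions}, the paper's insertion of $|x-p|^{\sigma}/|x-p|^{\sigma}$ (with $p\in\partial\Omega$ a nearest boundary point) being a cosmetic variant of your inequality $1\le\delta^{-\sigma}|x-y|^{\sigma}$ on the inner annulus. Your closing caveats are accurate and in fact apply equally to the paper's own proof: absorbing the far-field constant into $C\delta^{-\sigma}$ requires $\delta$ bounded on $\Omega$, i.e.\ $\Omega$ bounded rather than merely $\partial\Omega$ bounded, and the assertion $\int_\Omega\delta^{-\sigma}\,dx<\infty$ is stated without justification in the paper and does rely on $\sigma<1$ together with some boundary regularity — hypotheses that are in force wherever the lemma is actually invoked.
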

\begin{proof}
Let $C=C(N,\Omega,k)>0$ be constants varying from line to line and denote $U:=\{x\in \R^N\;:\; \dist(x,\Omega)\leq 1\}$. To see item 1., let $x\in \Omega$ and fix $p\in \partial\Omega$ such that $\delta(x)=|x-p|$. Then
\begin{align*}
\kappa_{k,\Omega}(x)\leq C+\int_{U\setminus \Omega}\frac{|x-p|^{\sigma}}{|x-p|^{\sigma}}k(x,y)\ dy\leq C+\delta(x)^{-\sigma}\int_{U\setminus \Omega} |x-y|^{\sigma}k(x,y)\ dy\leq C\delta^{-\sigma}(x),
\end{align*}
where we have used that $|x-p|\leq |x-y|$ for $y\in \R^N\setminus \Omega$. Now 2. follows immediately from 1., since we have
$$
b_{k,\R^N}(1_{\Omega})=\int_{\Omega}\int_{\R^N\setminus\Omega}k(x,y)\ dydx\leq C \int_{\Omega} \delta^{-\sigma}(x)\ dx <\infty.
$$
\end{proof}

\begin{thm}[See Theorem \ref{main-thm-density}]
\label{density2}
Let $\Omega\subset \R^N$ be an open bounded set with Lipschitz boundary. Then $C^{\infty}_c(\Omega)$ is dense in $\cD^k(\Omega)$. Moreover, if $u\in \cD^k(\Omega)$ is nonnegative, then we have
\begin{enumerate}
\item There exists a sequence $(u_n)_n\subset \cD^k(\Omega)$ with $\lim\limits_{n\to\infty}u_n= u$ in $\cD^k(\Omega)$ satisfying that for every $n\in \N$ there is $\Omega'_n\subset\subset\Omega$ with $u_n=0$ on $\R^N\setminus \Omega'_n$ and $0\leq u_n\leq u_{n+1}\leq u$.
\item There exists a sequence $(u_n)_n\subset C^{\infty}_c(\Omega)$ with $u_n\geq0$ for every $n\in \N$ and $\lim\limits_{n\to\infty}u_n= u$ in $\cD^k(\Omega)$.
\end{enumerate}
\end{thm}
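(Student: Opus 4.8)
The plan is to reduce everything to the approximation of a \emph{nonnegative} $u\in\cD^k(\Omega)$, and to approximate such a $u$ in three stages: truncation, cut-off towards the interior of $\Omega$, and mollification. Granting the three corresponding convergence statements, assertions (1) and (2) follow by a diagonal argument with a little monotonicity bookkeeping, and the density of $C^\infty_c(\Omega)$ in $\cD^k(\Omega)$ for a general $u\in\cD^k(\Omega)$ then follows by writing $u=u^+-u^-$, with $u^\pm\in\cD^k(\Omega)$ by Lemma~\ref{properties1}(2), and applying (2) to $u^+$ and $u^-$.

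The truncation and mollification stages are routine. For the truncation, put $v_m:=\min\{u,m\}$; since $t\mapsto\min\{t,m\}$ is $1$-Lipschitz and maps $0$ to $0$, one has $v_m\in\cD^k(\Omega)$ by Lemma~\ref{properties1}(3), $0\le v_m\le v_{m+1}\le u$, and $v_m\to u$ in $\cD^k(\Omega)$: the $L^2$-part is immediate and $b_k(u-v_m)\to0$ by dominated convergence, since $\big((u-v_m)(x)-(u-v_m)(y)\big)^2k(x,y)\to0$ pointwise and is dominated by $4(u(x)-u(y))^2k(x,y)\in L^1(\R^N\times\R^N)$ (because $b_k(u)<\infty$). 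For the mollification, if $v\in\cD^k(\Omega)$ has support in some $\Omega'\subset\subset\Omega$ and $(\rho_\epsilon)_\epsilon$ is a Dirac sequence, then $v\ast\rho_\epsilon\in C^\infty_c(\Omega)$ for small $\epsilon$, $v\ast\rho_\epsilon\ge0$ if $v\ge0$, and $v\ast\rho_\epsilon\to v$ in $\cD^k(\Omega)$; the convergence of the bilinear part is exactly the one obtained in the proof of Proposition~\ref{density1} (which, as noted there, does not use that $k$ depends only on $x-y$).

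The heart of the argument is the cut-off stage. Put $\delta(x):=\dist(x,\R^N\setminus\Omega)$ and, for $n\in\N$, let $\eta_n:=\max\{0,\min\{1,n\delta-1\}\}$; since $\delta$ is $1$-Lipschitz, $\eta_n$ is $n$-Lipschitz with $0\le\eta_n\le\eta_{n+1}\le1$, $\eta_n\equiv1$ on $\{\delta\ge2/n\}$, $\eta_n\equiv0$ on $\{\delta\le1/n\}$, $\eta_n\to1$ pointwise on $\Omega$, and $\eta_n\in C^{0,1}_c(\Omega)$. I would prove: \emph{if $v\in\cD^k(\Omega)\cap L^\infty(\Omega)$ then $\eta_nv\to v$ in $\cD^k(\Omega)$.} After the trivial $L^2$-part, write $b_k\big((1-\eta_n)v\big)=b_{k,\Omega}\big((1-\eta_n)v\big)+K_{k,\Omega}\big((1-\eta_n)v\big)$ by Remark~\ref{bilinearform rewrite}; since $0\le(1-\eta_n)v\le|v|$ is supported in the collar $T_n:=\{x\in\Omega:\delta(x)<2/n\}$ and $v^2\kappa_{k,\Omega}\in L^1(\Omega)$ (its integral being $K_{k,\Omega}(v)<\infty$), the $K_{k,\Omega}$-term is $\le\int_{T_n}v^2\kappa_{k,\Omega}\,dx\to0$ by dominated convergence. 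For the $b_{k,\Omega}$-term one estimates, for $x,y\in\Omega$,
\[
\big((1-\eta_n)v(x)-(1-\eta_n)v(y)\big)^2\le2(1-\eta_n(x))^2(v(x)-v(y))^2+2\,v(y)^2(\eta_n(x)-\eta_n(y))^2 .
\]
The first summand is dominated by $2(v(x)-v(y))^2k(x,y)\in L^1(\Omega\times\Omega)$ with integrand $\to0$ a.e.\ (as $\eta_n(x)\to1$ a.e.), so its integral $\to0$. The second summand is where the hypotheses $\sigma<1$ and the Lipschitz regularity of $\partial\Omega$ enter, through two facts: (i) $\int_{\R^N}\min\{1,n^2|x-y|^2\}k(x,y)\,dx\le Cn^\sigma$ uniformly in $y$ — by \eqref{assumptions} and $0<\sigma<2$, splitting at $|x-y|=1/n$ and bounding $n^2|x-y|^2\le n^\sigma|x-y|^\sigma$ on the near part and $k(x,y)\le n^\sigma\min\{1,|x-y|^\sigma\}k(x,y)$ on the far part; and (ii) $|\{x\in\Omega:\delta(x)<t\}|\le Ct$, since $\partial\Omega$ is Lipschitz. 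One then splits $\iint_{\Omega\times\Omega}v(y)^2(\eta_n(x)-\eta_n(y))^2k(x,y)\,dx\,dy$ according to $|x-y|\ge1/n$ or $|x-y|<1/n$: on the first region $(\eta_n(x)-\eta_n(y))^2\le1$, $k(x,y)\le n^\sigma\min\{1,|x-y|^\sigma\}k(x,y)$, and the integrand vanishes unless $x\in T_n$ or $y\in T_n$; on the second $(\eta_n(x)-\eta_n(y))^2\le n^2|x-y|^2\le n^\sigma\min\{1,|x-y|^\sigma\}$ and both $x$ and $y$ then lie in the $3/n$-collar of $\partial\Omega$. Using (i), (ii), and $v(y)^2\le\|v\|_{L^\infty(\Omega)}^2$, each region contributes $O\big(\|v\|_{L^\infty(\Omega)}^2\,n^{\sigma-1}\big)\to0$ since $\sigma<1$, which proves the claim.

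It remains to assemble the three assertions. For (1): given $u\ge0$, by the claim $\eta_nv_m\to v_m$ in $\cD^k(\Omega)$ as $n\to\infty$ for each fixed $m$, so one picks $l(m)<l(m+1)$ with $\|v_m-\eta_{l(m)}v_m\|_{\cD^k(\Omega)}<1/m$; then $u_m:=\eta_{l(m)}v_m$ satisfies $0\le u_m\le u_{m+1}\le u$ (monotonicity of $m\mapsto v_m$ and of $l\mapsto\eta_l$), vanishes outside $\{\delta\ge1/l(m)\}\subset\subset\Omega$, and $\|u-u_m\|_{\cD^k(\Omega)}\le\|u-v_m\|_{\cD^k(\Omega)}+1/m\to0$. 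For (2): apply the mollification stage to each $u_m$ from (1) (each has compact support in $\Omega$) and pass to a diagonal sequence in $C^\infty_c(\Omega)$, nonnegative since each $u_m\ge0$; density of $C^\infty_c(\Omega)$ in $\cD^k(\Omega)$ then follows as in the first paragraph. I expect the main obstacle to be exactly the second summand $\iint v(y)^2(\eta_n(x)-\eta_n(y))^2k(x,y)$ in the cut-off stage: the transition layer of $\eta_n$ has thickness $\sim1/n$, hence (by the Lipschitz hypothesis on $\partial\Omega$) measure $\sim1/n$, which beats the $\sim n^\sigma$ growth of the local energy of $\eta_n$ precisely because $\sigma<1$ — but this has to be carried out by a careful splitting, since the expression is not symmetric in $x$ and $y$.
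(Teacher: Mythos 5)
Your proof is correct and follows the same overall route as the paper (cut off near $\partial\Omega$ using the distance function, mollify, treat $u^\pm$ separately), but the handling of the key boundary estimate is genuinely different. The paper does not truncate in the $\cD^k$-case; instead, after decomposing $b_{k,\R^N}(u\phi_r)$ in the same way you do, it bounds the problematic term $r^{-\epsilon}\int_{A_{4r}}u^2$ (with $\epsilon\in[\sigma,1)$) via a surface-integral/Lebesgue-differentiation argument on $\partial\Omega$, which is somewhat delicate for a general $u\in L^2$ that is merely $0$ off $\Omega$. You instead truncate $v_m=\min\{u,m\}$ first — exactly as the paper does for $D^k(\Omega)$ in Theorem~\ref{density3} — and then replace the Lebesgue-differentiation step by the elementary measure bound $|\{\delta<t\}|\le Ct$ for Lipschitz $\partial\Omega$, obtaining the contribution $O(\|v\|_{L^\infty}^2 n^{\sigma-1})\to0$; this is cleaner and more robust, at the price of the extra truncation step and some monotonicity bookkeeping in the assembly of (1) and (2) (which you handle correctly). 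Two remarks worth keeping in mind: first, the finiteness $K_{k,\Omega}(v)<\infty$ that makes the $K_{k,\Omega}$-term go to zero by dominated convergence is indeed automatic for $v\in\cD^k(\Omega)$ by Remark~\ref{bilinearform rewrite}, and you use it correctly; second, both your argument and the paper's implicitly require $\sigma<1$ (you make this explicit in your cut-off estimate, and the paper needs $\epsilon<1$ with $\epsilon\ge\sigma$ in its $r^{1-\epsilon}$ decay), so the restriction $\sigma<1$ should be understood as part of the standing assumptions for this theorem even though it is not stated there.
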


\begin{proof}
Note that the second claim follows immediately from the first one using \cite[Proposition 4.1]{JW19} as in the proof of Proposition \ref{density1}. Then also the main claim follows by considering $u^{\pm}$ separately. Hence it is enough to show 1. We proceed similar to \cite[Theorem 3.1]{CW18}. Denote $\delta(x):=\dist(x,\R^N\setminus \Omega)$. For $r>0$, define the Lipschitz map
\[
\phi_r:\R^N\to\R,\quad\phi_r(x)=\left\{\begin{aligned}
&0 &&\delta(x)\geq 2r,\\
&2-\frac{\delta(x)}{r} && r\leq \delta(x)\leq 2r,\\
&1&& \delta(x)\leq r.
\end{aligned}\right.
\]
Note that we have $\phi_s\leq \phi_r$ for $0<s\leq r$. We show
\begin{equation}
\label{claim for density2}
u\phi_r\in \cD^k(\Omega)\ \text{ for $r>0$ sufficiently small and } b_{k,\R^N}(u\phi_r)\to 0 \ \text{ for $r\to 0$.}
\end{equation}
Note that once this is shown, we have $u(1-\phi_r)\in \cD^k(\Omega)$ for $r>0$ sufficiently small and $u(1-\phi_r)\to u$ for $r\to0$. Since also $0\leq u(1-\phi_r)\leq u(1-\phi_s)$ for $0<s\leq r$ and $u(1-\phi_r)=0$ for $x\in \R^N$ with $\delta(x)\leq r$, it follows that \eqref{claim for density2} implies 1.\\
The remainder of the proof is to show \eqref{claim for density2}. For this, let $C=C(N,\Omega,k)>0$ be a constant which may vary from line to line. Let $A_t:=\{x\in \Omega\;:\; \delta(x)\leq t\}$. Note that $u\phi_r$ vanishes on $\R^N\setminus A_{2r}$, we have $0\leq \phi_r\leq 1$ and, moreover,
\[
|\phi_r(x)-\phi_r(y)|\leq  \min\Big\{C\frac{|x-y|}{r},1\Big\}\quad\text{for $x,y\in \R^N$.}
\]
Then proceeding similarly to the proof of Lemma \ref{properties1}.(5) we find for $r$ small enough
\begin{align*}
b_{k,\R^N}&(u\phi_r)=\frac{1}{2}\int_{A_{4r}}\int_{A_{4r}}(u(x)\phi_r(x)-u(y)\phi_r(y))^2k(x,y)\ dxdy+\int_{A_{2r}}u^2(x)\phi_r^2(x)\kappa_{k,A_{4r}}(x)\ dx\\
&\leq \int_{A_{4r}}\int_{A_{4r}}\Big(u(x)^2(\phi_r(x)-\phi_r(y))^2 +(u(x)-u(y))^2\phi_r^2(y)^2\Big)k(x,y)\ dxdy\\
&\qquad\qquad +\int_{A_{2r}}u^2(x)\kappa_{k,A_{4r}}(x)\ dx\\
&\leq \frac{C}{r^2}\int_{A_{4r}}u(x)^2\int_{B_r(x)}|x-y|^2k(x,y)\ dydx+\int_{A_{4r}}u(x)^2\int_{\R^N\setminus B_r(x)}k(x,y)\ dydx\\
&\qquad\qquad + \int_{A_{4r}}\int_{A_{4r}}(u(x)-u(y))^2k(x,y)\ dxdy+\int_{A_{2r}}u^2(x)\kappa_{k,A_{4r}}(x)\ dx\\
&\leq \frac{C}{r^{\epsilon}}\int_{A_{4r}}u(x)^2\int_{B_1(x)}|x-y|^{\epsilon}k(x,y)\ dydx+C\int_{A_{4r}}u(x)^2 dx +b_{k,A_{4r}}(u)+\int_{A_{2r}}u^2(x)\kappa_{k,A_{4r}}(x)\ dx\\
&\leq \frac{C}{r^{\epsilon}}\int_{A_{4r}}u(x)^2\ dx+C\int_{A_{4r}}u(x)^2 dx +b_{k,A_{4r}}(u)+\int_{A_{2r}}u^2(x)\kappa_{k,A_{4r}}(x)\ dx.
\end{align*}
Note here, since $u\in D^k(\R^N)$, we have $\int_{A_{4r}}u(x)^2 dx +b_{k,A_{4r}}(u)\to 0$ for $r\to 0$. Moreover, we have by Lebesgue's differentiation theorem
\begin{align*}
\frac{C}{r^{\epsilon}}\int_{A_{4r}}u(x)^2 dx&\leq \frac{C|B_{4r}|}{r^{\epsilon}}\int_{\partial\Omega}\frac{1}{|B_{4r}|}\int_{B_{4r}(\theta)}u(x)^2\ dx\sigma(d\theta)\\
&\leq Cr^{1-\epsilon}\int_{\partial\Omega}\frac{1}{|B_{4r}|}\int_{B_{4r}(\theta)}u(x)^2\ dx\sigma(d\theta)\to 0\quad\text{for $r\to 0^+$.}
\end{align*}
 Finally, since
\begin{equation}
\label{special case density}
K_{k,\Omega}(u)=\int_{\Omega}u^2(x)\kappa_{k,\Omega}(x)\ dx<\infty
\end{equation}
and, by Lemma \ref{consequences of assumption2}, we have
\[
\kappa_{k,A_{4r}}(x)\leq \int_{\R^N\setminus \Omega}k(x,y)\ dy+\int_{\Omega\setminus A_{4r}}k(x,y)\ dy\leq C \kappa_{k,\Omega}(x)+Cr^{-\epsilon}
\]
for $x\in A_{2r}$, so that also $\int_{A_{2r}}u^2(x)\kappa_{k,A_{4r}}(x)\ dx\to 0$ for $r\to 0$ with a similar argument.
\end{proof}

\begin{proof}[Proof of Theorem \ref{main-thm-density} for $X(\Omega)=\cD^k(\Omega)$]
This statement now follows from Theorem \ref{density2}, Lemma \ref{density-pre1}, and Proposition \ref{density1}.
\end{proof}

\begin{thm}
\label{density3}
Let $\Omega\subset \R^N$ be an open bounded set with Lipschitz boundary. Then $C^{\infty}_c(\Omega)$ is dense in $D^k(\Omega)$. Moreover, if $u\in D^k(\Omega)$ is nonnegative, then we have
\begin{enumerate}
\item There exists a sequence $(u_n)_n\subset D^k(\Omega)\cap L^{\infty}(\Omega)$ with $\lim\limits_{n\to\infty}u_n= u$ in $D^k(\Omega)$ satisfying that for every $n\in \N$ there is $\Omega'_n\subset\subset\Omega$ with $u_n=0$ on $\Omega\setminus \Omega'_n$ and $0\leq u_n\leq u_{n+1}\leq u$.
\item There exists a sequence $(u_n)_n\subset C^{\infty}_c(\Omega)$ with $u_n\geq0$ for every $n\in \N$ and $\lim\limits_{n\to\infty}u_n= u$ in $D^k(\Omega)$.
\end{enumerate}
\end{thm}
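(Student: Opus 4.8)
The plan is to follow the scheme of the proof of Theorem~\ref{density2}, exploiting that the form $b_{k,\Omega}$ carries no tail term $K_{k,\Omega}$, so that once a function is supported in a compact subset of $\Omega$ one may pass to the space $\cD^k$ on a slightly larger smooth subdomain and simply invoke Theorem~\ref{density2}. For general $u\in D^k(\Omega)$ one writes $u=u^+-u^-$ with $u^{\pm}\in D^k(\Omega)$ by Lemma~\ref{properties1}(2), applies part (2) to each and subtracts; this gives the density of $C^\infty_c(\Omega)$, so it remains to establish (1) and (2) for a fixed nonnegative $u\in D^k(\Omega)$. I record two elementary facts used throughout. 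First, if $w$ is supported in an open set $U\subset\Omega$, then splitting $\R^N$ into $U$ and its complement gives $b_{k,\R^N}(w)=b_{k,U}(w)+K_{k,U}(w)$ in $[0,\infty]$, whence $b_{k,\Omega}(w)\le b_{k,\R^N}(w)$ and $\|w\|_{D^k(\Omega)}\le\|w\|_{\cD^k(U)}$. Second, by Lemma~\ref{consequences of assumption2}(1) (or directly by \eqref{assumptions}), $\kappa_{k,U}$ is bounded on every compact subset of $U$.

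First I would handle the truncation together with the cutting-off near $\partial\Omega$. Fix $n\in\N$ and put $v_n:=\min\{u,n\}\in D^k(\Omega)\cap L^\infty(\Omega)$, which is nonnegative and, since $t\mapsto\min\{t,n\}$ is $1$-Lipschitz, satisfies $|v_n(x)-v_n(y)|\le|u(x)-u(y)|$; hence $b_{k,\Omega}(u-v_n)\to0$ and $\|u-v_n\|_{L^2(\Omega)}\to0$ as $n\to\infty$ by dominated convergence. Let $\phi_\rho$ be the Lipschitz cutoffs from the proof of Theorem~\ref{density2}, so that $\phi_\rho=1$ where $\delta(x):=\dist(x,\R^N\setminus\Omega)\le\rho$, $\phi_\rho=0$ where $\delta(x)\ge2\rho$, $|\phi_\rho(x)-\phi_\rho(y)|\le\min\{C|x-y|/\rho,1\}$, and $\phi_s\le\phi_r$ for $s\le r$. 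The key point is the analogue of \eqref{claim for density2}: for every fixed $v\in D^k(\Omega)\cap L^\infty(\Omega)$ one has $\|v\phi_\rho\|_{D^k(\Omega)}\to0$ as $\rho\to0$. Writing $A_t:=\{x\in\Omega:\delta(x)\le t\}$, the $L^2$-part is immediate since $v\phi_\rho$ vanishes off $A_{2\rho}$ and $|A_{2\rho}|\le C\rho$ by the Lipschitz assumption. For $b_{k,\Omega}(v\phi_\rho)$ I would use $(v\phi_\rho(x)-v\phi_\rho(y))^2\le 2\phi_\rho(x)^2(v(x)-v(y))^2+2v(y)^2(\phi_\rho(x)-\phi_\rho(y))^2$. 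The contribution of the first summand to $b_{k,\Omega}(v\phi_\rho)$ is at most $\int_{A_{2\rho}}F(x)\,dx$, where $F(x):=\int_\Omega(v(x)-v(y))^2k(x,y)\,dy$ belongs to $L^1(\Omega)$ because $\int_\Omega F(x)\,dx=2b_{k,\Omega}(v)<\infty$, so it vanishes by dominated convergence as $\rho\to0$. For the second summand, the Lipschitz bound on $\phi_\rho$ together with \eqref{assumptions} gives $\int_{\R^N}(\phi_\rho(x)-\phi_\rho(y))^2k(x,y)\,dx\le C\rho^{-\sigma}$, so the part with $\delta(y)\le2\rho$ is at most $C\|v\|_{L^\infty(\Omega)}^2\rho^{-\sigma}|A_{2\rho}|\le C\|v\|_{L^\infty(\Omega)}^2\rho^{1-\sigma}$; on $\{\delta(y)>2\rho\}$ one has $\phi_\rho(y)=0$ and, for $x\in A_{2\rho}$, $|x-y|\ge\delta(y)-\delta(x)>2\rho-\delta(x)$, so this part is at most $C\|v\|_{L^\infty(\Omega)}^2\int_{A_{2\rho}}(2\rho-\delta(x))^{-\sigma}\,dx$, which the finiteness of the Minkowski content of $\partial\Omega$ bounds by $C\|v\|_{L^\infty(\Omega)}^2\rho^{1-\sigma}$. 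All of these vanish as $\rho\to0$. Granting this claim, choose $\rho_n\downarrow0$ small enough that $\|v_n\phi_{\rho_n}\|_{D^k(\Omega)}<1/n$ and set $u_n:=v_n(1-\phi_{\rho_n})$. Then $u_n\in D^k(\Omega)\cap L^\infty(\Omega)$ is nonnegative, vanishes on $\{x\in\Omega:\delta(x)<\rho_n\}$ and hence on $\Omega\setminus\Omega'_n$ for a suitable $\Omega'_n\subset\subset\Omega$, satisfies $0\le u_n\le u_{n+1}\le u$ (using $v_n\le v_{n+1}$ and $1-\phi_{\rho_n}\le1-\phi_{\rho_{n+1}}$ since $\rho_{n+1}\le\rho_n$), and $\|u-u_n\|_{D^k(\Omega)}\le\|u-v_n\|_{D^k(\Omega)}+\|v_n\phi_{\rho_n}\|_{D^k(\Omega)}\to0$. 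This establishes (1).

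Next I would mollify, reducing to Theorem~\ref{density2}. Given the sequence $(u_n)$ from the previous step, choose for each $n$ a bounded open set $\Omega^{(n)}$ with $C^\infty$ boundary and $\supp u_n\subset\Omega^{(n)}\subset\subset\Omega$. Then $u_n$ vanishes off $\Omega^{(n)}$ and $b_{k,\R^N}(u_n)=b_{k,\Omega^{(n)}}(u_n)+K_{k,\Omega^{(n)}}(u_n)\le b_{k,\Omega}(u_n)+\|u_n\|_{L^2(\Omega)}^2\|\kappa_{k,\Omega^{(n)}}\|_{L^\infty(\supp u_n)}<\infty$, so $u_n\in\cD^k(\Omega^{(n)})$ and $u_n\ge0$. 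Applying Theorem~\ref{density2}(2) on $\Omega^{(n)}$ produces nonnegative $\psi^{(n)}_m\in C^\infty_c(\Omega^{(n)})\subset C^\infty_c(\Omega)$ with $\psi^{(n)}_m\to u_n$ in $\cD^k(\Omega^{(n)})$ as $m\to\infty$; since all these functions are supported in $\Omega^{(n)}\subset\Omega$, the first paragraph yields $\|\psi^{(n)}_m-u_n\|_{D^k(\Omega)}\le\|\psi^{(n)}_m-u_n\|_{\cD^k(\Omega^{(n)})}\to0$. A diagonal choice $m=m(n)$ with $\|\psi^{(n)}_{m(n)}-u_n\|_{D^k(\Omega)}<1/n$ then gives a nonnegative sequence in $C^\infty_c(\Omega)$ converging to $u$ in $D^k(\Omega)$, which is (2).

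The only nontrivial point is the boundary-cutoff estimate $\|v\phi_\rho\|_{D^k(\Omega)}\to0$; everything else repeats the corresponding parts of the proof of Theorem~\ref{density2}, simplified by the absence of the tail term in $b_{k,\Omega}$. That estimate is where the Lipschitz regularity of $\partial\Omega$ genuinely enters, through the bounds $|A_{2\rho}|\le C\rho$ and $|A_t\setminus A_s|\le C(t-s)$ for small $0<s<t$, and it is also where the order of the kernel is felt: the integral $\int_{A_{2\rho}}(2\rho-\delta(x))^{-\sigma}\,dx$ is finite and of size $\rho^{1-\sigma}$ exactly when $\sigma<1$, which is the range in which this section operates. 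I expect the bookkeeping of this estimate --- splitting the double integral according to whether $x$ or $y$ is close to $\partial\Omega$, and separating the near-diagonal and far parts of the kernel --- to be the only delicate part.
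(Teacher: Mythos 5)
Your proposal is correct and follows the same strategy as the paper's proof (truncation by $\min\{u,n\}$, boundary cutoff by $1-\phi_\rho$, then smooth approximation), so the core comparison is just a matter of detail. The one place where you genuinely diverge from the paper is how the last step is organized: the paper mollifies directly (citing the argument of Proposition~\ref{density1} and \cite[Proposition~4.1]{JW19}), whereas you observe that once $u_n$ is bounded with compact support in $\Omega$ it lies in $\cD^k(\Omega^{(n)})$ for any smooth $\Omega^{(n)}$ with $\supp u_n\subset\subset\Omega^{(n)}\subset\subset\Omega$, and then simply invoke Theorem~\ref{density2}(2) on $\Omega^{(n)}$ together with the comparison $\|\cdot\|_{D^k(\Omega)}\le\|\cdot\|_{\cD^k(\Omega^{(n)})}$ for such functions. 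This buys you a slightly cleaner modularity and avoids redoing the convolution estimate, at the cost of the auxiliary choice of $\Omega^{(n)}$. Your worked-out version of the boundary-cutoff estimate is more explicit than the paper's (which refers the reader back to the proof of Theorem~\ref{density2} and the term in \eqref{special case density}): you correctly note that in $D^k(\Omega)$ there is no tail term $K_{k,\Omega}$ to absorb, which simplifies matters, and your far-field estimate via $|x-y|\ge\delta(y)-\delta(x)>2\rho-\delta(x)$ and $\int_{A_{2\rho}}(2\rho-\delta(x))^{-\sigma}\,dx\lesssim\rho^{1-\sigma}$ is a valid alternative to the paper's bound $\kappa_{k,A_{4r}}(x)\le C\kappa_{k,\Omega}(x)+Cr^{-\sigma}$; both rely on the Lipschitz bound $|A_t|\le Ct$ and on $\sigma<1$, which, as you observe, is the implicit standing assumption in this section.
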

\begin{proof}
Consider the Lipschitz map 
$$
g_n:\R\to\R,\qquad g_n(t)=\left\{\begin{aligned}&0 &&t\leq 0\\
& t  && 0<t<n\\
&n && t\geq n.\end{aligned}\right.
$$
Then $v_n:=g_n(u)\in D^k(\Omega)\cap L^{\infty}(\Omega)$ and we have with $\phi_r$ as in the proof of Proposition \ref{density2}
\[
b_{k,\Omega}(u-(1-\phi_r)v_n)\leq b_{k,\Omega}(u-v_n)+b_{k,\Omega}( \phi_rv_n).
\]
Clearly, $b_{k,\Omega}(u-v_n)\to 0$ for $n\to\infty$ by dominated convergence and $b_{k,\Omega}( \phi_rv_n)\to 0$ for $r\to0$ analogously to the proof of Proposition \ref{density2}, noting that the term in \eqref{special case density} reads in this case
$$
K_{k,\Omega}(v_n)\leq n^2\int_{\Omega}\kappa_{k,\Omega}(x)\ dx<\infty \quad\text{for every $n\in \N$.}
$$
In particular, statement (1) follows. Now statement (2) and the density statement follow analogously, again, to the proof of Proposition \ref{density2}.
\end{proof}

\begin{proof}[Proof of Theorem \ref{main-thm-density} for $X(\Omega)=D^k(\Omega)$]
	This statement now follows from Theorem \ref{density3}, Lemma \ref{density-pre1}, and Proposition \ref{density1}.
\end{proof}

\begin{rem}
\label{hardy-bem}
It is tempting to conjecture the following type of Hardy inequality: There is $C>0$ such that 
\[
K_{k,\Omega}(\phi)\leq C\Big(\|\phi\|_{L^2(\Omega)}^2+b_{k,\Omega}(\phi)\Big)\quad\text{for all $\phi\in C^{\infty}_c(\Omega)$}
\]
if $\Omega$ is a bounded Lipschitz set and $k$ is such that its symmetric lower bound $j$ is not in $L^1(\R^N)$. Let us mention that for $k(x,y)=|x-y|^{-2s-N}$ this holds for $s\in(0,1)$, $s\neq \frac{1}{2}$, see \cite{CS03,D04}. Moreover, for $k(x,y)=1_{B_{1}(0)}(x-y)|x-y|^{-N}$, this has been shown in \cite{CW18}. In the general framework presented here, however, it is not clear if this is true.
\end{rem}

\begin{rem}
	\label{rem-weak-defi}
	With the above density results, we can now note that our definition of weak supersolutions (and similarly of weak subsolutions and solutions), see Definition \ref{defi-weak}, can be extended slightly:\\
	Let $u\in \cV^k_{loc}(\Omega)$ satisfy weakly $I_ku\geq f$ in $\Omega$ for some $f\in L^1_{loc}(\Omega)$ and $\Omega\subset \R^N$ open and bounded with Lipschitz boundary. 
	\begin{enumerate}
		\item If $f\in L^2_{loc}(\Omega)$, then by density it also holds
		\begin{equation}
		\label{weak-form2}
		b_k(u,v)\geq \int_{U}f(x)v(x)\ dx\quad\text{for all nonnegative $v\in \cD^k(U)$, $U\subset\subset\Omega$.}
		\end{equation}
		\item If $u\in \cV^k(\Omega)\cap L^{\infty}(\R^N)$ and $f\in L^2(\Omega)$, then by density it also holds
		\begin{equation}
		\label{weak-form3}
		b_k(u,v)\geq \int_{\Omega}f(x)v(x)\ dx\quad\text{for all nonnegative $v\in \cD^k(\Omega)$.}
		\end{equation}
	\end{enumerate}
Finally note that if $u:\R^N\to\R$ satisfies $u1_{U}\in D^k(U)$ for some $U\subset\subset\R^N$ and $u\in L^{\infty}(\R^N\setminus U)$, then $u\in \cV^k_{loc}(U)$.
\end{rem}

\begin{prop}[Weak maximum principle]
	\label{wmp-b}
Assume that the symmetric lower bound $j$ of $k$ defined in \eqref{defi-j} satisfies
	\begin{equation}
	\label{assumption2b}
	\text{$j$ does not vanish identically on $B_r(0)$ for any $r>0$.}
    \end{equation} 
    Let $\Omega\subset \R^N$ open and with Lipschitz boundary, $c\in L^{\infty}_{loc}(\Omega)$, and assume either
	\begin{enumerate}
		\item $c\leq 0$ or 
		\item $\Omega$ and $c$ are such that $\|c^{+}\|_{L^{\infty}(\Omega)}<\inf_{x\in \Omega} \int_{\R^N\setminus \Omega}k(x,y)\ dy$.
	\end{enumerate}
	If $u\in \cV^k(\Omega)$ satisfies in weak sense 
	$$
	I_ku\geq c(x)u\quad\text{ in $\Omega$, $u\geq 0$ almost everywhere in $\R^{N}\setminus \Omega$, and}\quad \liminf\limits_{|x|\to\infty}u(x)\geq 0,
	$$
	then $u\geq 0$ almost everywhere in $\R^N$.
\end{prop}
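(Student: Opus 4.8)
The plan is to prove that $u^-=0$ a.e.\ in $\R^N$, which together with $u^-=0$ a.e.\ on $\R^N\setminus\Omega$ (from the hypothesis) gives the claim. By Lemma~\ref{properties1}(2), $u^-\in D^k(\Omega)$, but in general $u^-\notin\cD^k(\Omega)$: for a function in $\cV^k(\Omega)$ the exterior energy $K_{k,\Omega}(u^-)=\int_{\Omega}(u^-)^2\kappa_{k,\Omega}\,dx$ may be infinite, so $u^-$ is not an admissible test function. This is exactly the point where the enlarged notion of (super)solution forces additional work, and the idea is to test the weak inequality against a monotone sequence of admissible approximants of $u^-$ and pass to the limit.

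First I would use the density results of Section~\ref{sec:dense} (Theorem~\ref{main-thm-density}(1), applied to $u^-$; for unbounded $\Omega$ combined with a spatial truncation as in Lemma~\ref{density-pre1} and the hypothesis $\liminf_{|x|\to\infty}u(x)\ge0$, which confines $\{u<-\epsilon\}$ to a bounded set) to obtain a sequence $(u_n)_n$ with $u_n$ compactly supported in $\Omega$, $0\le u_n\le u_{n+1}\le u^-$, $u_n\to u^-$ in $D^k(\Omega)$, and $u_n\in\cD^k(\Omega)$; here $\sigma<1$ enters via the bound $\kappa_{k,\Omega}(x)\le C\,\dist(x,\R^N\setminus\Omega)^{-\sigma}$ of Lemma~\ref{consequences of assumption2} (ensuring $b_{k,\R^N}(u_n)<\infty$ on the compact support of $u_n$), and the Lipschitz boundary of $\Omega$ is needed for the density theorem. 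Since $cu\in L^2_{loc}(\Omega)$, Remark~\ref{rem-weak-defi}(1) gives $b_k(u,u_n)\ge\int_{\Omega}c(x)u(x)u_n(x)\,dx$ for every $n$. As $0\le u_n\le u^-$, the support of $u_n$ lies in $\{u\le0\}$, so $u^+$ and $u_n$ have disjoint supports, $(u^+(x)-u^+(y))(u_n(x)-u_n(y))\le0$ pointwise, hence $b_k(u^+,u_n)\le0$; moreover $u=-u^-$ on the support of $u_n$. Writing $u=u^+-u^-$ and using bilinearity of $b_k$ (valid since $u^{\pm}\in\cV^k(\Omega)$ and $u_n\in\cD^k(\Omega)$), these facts rearrange to
$$b_k(u^-,u_n)\le\int_{\Omega}c(x)u^-(x)u_n(x)\,dx.$$

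Next I would pass to the limit. Since $u^-$ and $u_n$ vanish outside $\Omega$, $b_k(u^-,u_n)=b_{k,\Omega}(u^-,u_n)+K_{k,\Omega}(u^-,u_n)$ (cf.\ Remark~\ref{bilinearform rewrite}); the first summand converges to $b_{k,\Omega}(u^-)$ by the Cauchy--Schwarz inequality and the $D^k(\Omega)$-convergence, the second increases to $K_{k,\Omega}(u^-)\in[0,\infty]$ by monotone convergence, and the right-hand side is $\le\|c^+\|_{L^\infty(\Omega)}\|u^-\|_{L^2(\Omega)}^2$. Hence
$$b_{k,\Omega}(u^-)+K_{k,\Omega}(u^-)\le\|c^+\|_{L^\infty(\Omega)}\|u^-\|_{L^2(\Omega)}^2.$$
In case~(2), $K_{k,\Omega}(u^-)\ge\big(\inf_{x\in\Omega}\kappa_{k,\Omega}(x)\big)\|u^-\|_{L^2(\Omega)}^2$, so this forces $b_{k,\Omega}(u^-)+\big(\inf_{x\in\Omega}\kappa_{k,\Omega}(x)-\|c^+\|_{L^\infty(\Omega)}\big)\|u^-\|_{L^2(\Omega)}^2\le0$ with strictly positive coefficient, whence $u^-=0$. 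In case~(1), $\|c^+\|_{L^\infty(\Omega)}=0$, so $b_k(u^-)=b_{k,\Omega}(u^-)+K_{k,\Omega}(u^-)=0$; the $\delta$-decomposition of Lemma~\ref{iterationlemma}, applied as in the proof of Proposition~\ref{constant} together with \eqref{assumption2b}, then shows $u^-$ is locally constant, hence constant on $\R^N$, and this constant must vanish because $u^-=0$ on $\R^N\setminus\Omega$ (or, if $\Omega=\R^N$, because $\liminf_{|x|\to\infty}u\ge0$).

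The main obstacle is the passage to the limit in the mixed energy $b_k(u^-,u_n)$: a $\cV^k(\Omega)$-function may have infinite exterior energy, so one must simultaneously control the interior part (via Cauchy--Schwarz) and the possibly infinite exterior part $K_{k,\Omega}(u^-)$ (via monotone convergence---this is why a \emph{monotone} approximating sequence is essential, not an arbitrary one), and along the way justify the reduction to a bounded Lipschitz domain by means of the behavior of $u$ at infinity. Keeping the full term $b_{k,\Omega}(u^-)+K_{k,\Omega}(u^-)$ in the limiting inequality is what lets both cases close; once this bookkeeping is in place, the concluding step in each case is short.
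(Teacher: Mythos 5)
Your proof is correct and follows essentially the same route as the paper: test the weak inequality against a monotone sequence of compactly supported approximants of $u^-$ furnished by the density results of Section~\ref{sec:dense}, pass to the limit using the split $b_k(u^-,\cdot)=b_{k,\Omega}(u^-,\cdot)+K_{k,\Omega}(u^-,\cdot)$, and invoke the nondegeneracy of $b_{k,\Omega}$ via Proposition~\ref{constant} / Lemma~\ref{iterationlemma}. The only minor deviation is that you close case (2) directly through the coercivity of $K_{k,\Omega}$, whereas the paper derives $b_{k,\Omega}(u^-)=0$ first and then argues by contradiction with the constant $m>0$ in both cases; this is a harmless streamlining of the same argument.
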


\begin{proof}
Note that also $u^-\in \cV^k(\Omega)$ and in particular $u^-\in D^k(\Omega)$. Hence, we can find $(v_n)_n\subset C^{\infty}_c(\Omega)$ with $v_n\to u^-$ in $D^k(\Omega)$ for $n\to \infty$ with $0\leq v_n\leq v_{n+1}\leq u^-$ by Proposition \ref{density3}. Then
\begin{align*}
b_{k,\R^N}(u,v_n)\geq \int_{\Omega} c(x) u(x)v_n(x)\ dx\geq -\|c^+\|_{L^{\infty}(\Omega)}\int_{\Omega} u^-(x)v_n(x)\ dx.
\end{align*}
On the other hand, since $u^+v_n=0$ for all $n\in \N$ and $u\geq 0$ almost everywhere in $\R^{N}\setminus \Omega$, we find
\begin{align*}
b_{k,\R^N}(u,v_n)&=b_{k,\Omega}(u,v_n)+\int_{\Omega}v_n(x)\int_{\R^N\setminus \Omega}(u(x)-u(y))k(x,y)\ dydx\\
&\leq b_{k,\Omega}(u^+,v_n)-b_{k,\Omega}(u^-,v_n)+\int_{\Omega}v_n(x)u(x)\int_{\R^N\setminus \Omega} k(x,y)\ dydx\\
&\leq - b_{k,\Omega}(u^-,v_n)-K_{k,\Omega}(u^-,v_n).
\end{align*}
Hence
$$
0\leq \int_{\Omega} u^-(x)v_n(x)\Big(\|c^+\|_{L^{\infty}(\Omega)}-\kappa_{k,\Omega}(x)\Big)\ dx-b_{k,\Omega}(u^-,v_n)\leq -b_{k,\Omega}(u^-,v_n).
$$
Since $v_n\to u^-$ in $D^k(\Omega)$, it follows that $b_{k,\Omega}(u^-,u^-)=0$, but then $u^-$ is constant by Proposition \ref{constant} in $\Omega$. Assume by contradiction that $u^-=m>0$. Then the above calculation gives
\begin{equation}
\label{contradiction}
0\leq m \int_{\Omega}  v_n(x)\Big(\|c^+\|_{L^{\infty}(\Omega)}-\kappa_{k,\Omega}(x)\Big)\ dx,
\end{equation}
which is in both cases a contradiction: If in case 1. $c\leq 0$, then since $\kappa_{k,\Omega}(x)\not\equiv 0$ and since $v_n\to m$ in $D^k(\Omega)$ the right-hand side of \eqref{contradiction} is negative.\\ In case 2. this contradiction is immediate in a similar way.
\end{proof}

\begin{proof}[Proof of Proposition \ref{wmp}]
The statement follows immediately from Proposition \ref{wmp-b}.
\end{proof}

\begin{rem}\label{rem-hardy-wmp}
Usually, the weak maximum principle is stated with an assumption on the first eigenvalue $\Lambda_1(\Omega)$ in place of $\inf_{x\in \Omega}\kappa_{k,\Omega}(x)$. This can be done once the Hardy inequality in Remark \ref{hardy-bem} is shown. Indeed, following the proof of Proposition \ref{wmp-b} gives
$$
-\|c^+\|_{L^{\infty}(\Omega)}\int_{\Omega} u^-(x)v_n(x)\ dx\leq b_{k,\R^N}(u,v_n)\leq - b_{k,\Omega}(u^-,v_n)-K_{k,\Omega}(u^-,v_n)=-b_{k,\R^N}(u^-,v_n).
$$
With $n\to\infty$ and using that by the Hardy inequality it holds $\cD^k(\Omega)=D^k(\Omega)$, it follows that
$$
-\Lambda_1(\Omega)\|u^-\|_{L^2(\Omega)}^2 \geq -b_{k,\R^N}(u^-,u^-)\geq -\|c^+\|_{L^{\infty}(\Omega)}\|u^-\|_{L^2(\Omega)}^2
$$
and the conclusion follows similarly.
\end{rem}

\begin{proof}[Proof of Corollary \ref{wmp-special}]
This statement now follows from Remark \ref{rem-hardy-wmp} using Remark \ref{hardy-bem}.
\end{proof}

\begin{prop}[Strong maximum principle]
	\label{smp-b}
	Assume $k$ satisfies additionally \eqref{assumption2}. Let $\Omega\subset \R^N$ open and $c\in L^{\infty}_{loc}(\Omega)$ with $\|c^+\|_{L^{\infty}(\Omega)}<\infty$. Moreover, let $u\in \cV^k(\Omega)$, $u\geq 0$ satisfy in weak sense $I_ku\geq c(x) u$ in $\Omega$. 
	\begin{enumerate}
		\item If $\Omega$ is connected, then either $u\equiv 0$ in $\Omega$ or $\essinf_Ku>0$ for any $K\subset\subset \Omega$.
		\item If  $j$ given in \eqref{defi-j} satisfies $\essinf_{B_r(0)}j>0$ for any $r>0$, then either $u\equiv 0$ in $\R^N$ or $\essinf_Ku>0$ for any $K\subset\subset \Omega$.
	\end{enumerate}
\end{prop}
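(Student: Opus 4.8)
The plan is to follow the scheme of the weak maximum principle, Proposition~\ref{wmp-b}, and of the strong maximum principle of \cite{JW19}, exploiting the variational structure together with the density results of this section and localizing everything via Lemma~\ref{localization}, so that passing from the class used in \cite{JW19} to the enlarged class $\cV^k(\Omega)$ causes no trouble. I would reduce the statement to two ingredients. \textbf{(A) Propagation of vanishing:} if $u\ge0$ is a weak supersolution of $I_ku\ge cu$ in $\Omega$ and $u$ vanishes a.e.\ on some ball $B\subset\subset\Omega$, then $u\equiv0$ in $\Omega$ when $\Omega$ is connected, and $u\equiv0$ a.e.\ in $\R^N$ when in addition $\essinf_{B_r(0)}j>0$ for every $r>0$. \textbf{(B) Local positivity:} if, in the same situation, $u$ does \emph{not} vanish a.e.\ on a ball $B_{2\rho}(x_0)\subset\subset\Omega$, then $\essinf_{B_{\rho/2}(x_0)}u>0$. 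Granting (A) and (B): if $u$ vanishes a.e.\ on some ball contained compactly in $\Omega$, (A) yields the first alternative in (1) (resp.\ in (2)); otherwise (B) applies on each ball of a finite subcover of an arbitrary $K\subset\subset\Omega$, and taking the minimum over the cover gives $\essinf_Ku>0$.

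For (A) I would use the rigidity coming from the equation. If $u\equiv0$ a.e.\ on $B_\rho(x_0)\subset\subset\Omega$, testing $b_k(u,\phi)\ge\int_\Omega c\,u\,\phi$ with $0\le\phi\in C^\infty_c(B_\rho(x_0))$ and splitting $b_k(u,\phi)$ into its $B_\rho(x_0)$-part (which vanishes, since $u\equiv0$ there) and its exterior part gives $\int_{B_\rho(x_0)}\phi(x)\int_{\R^N\setminus B_\rho(x_0)}u(y)k(x,y)\,dy\,dx\le0$, whence $\int_{\R^N\setminus B_\rho(x_0)}u(y)\,k(x,y)\,dy=0$ for a.e.\ $x\in B_\rho(x_0)$, and therefore $u(y)\,j(x-y)=0$ for a.e.\ $y$ (since $k\ge j\ge0$). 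Since \eqref{assumption2} together with \eqref{assumptions} forces $\int_{B_s(0)}j=\infty$, hence $|\{j>0\}\cap B_s(0)|>0$, for every $s>0$, this lets one enlarge a vanishing ball up to $\dist(x_0,\partial\Omega)$ and, chaining balls, conclude $u\equiv0$ in a connected $\Omega$; if moreover $\essinf_{B_r(0)}j>0$ for every $r>0$, so that $j>0$ a.e.\ on $\R^N$, the identity gives $u\equiv0$ a.e.\ on $\R^N$ directly. To avoid pathologies from a thin set $\{j>0\}$ it is cleaner to run this step with $q\ast q$ in place of $j$, as in the proof of Proposition~\ref{constant} and using Lemma~\ref{iterationlemma}.

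For (B) I would first localize with Lemma~\ref{localization}: for a Lipschitz cutoff $\eta$ equal to $1$ on $B_\rho(x_0)$ and supported in $B_{2\rho}(x_0)$, the function $v=\eta u$ lies in $\cD^k(B_{2\rho}(x_0))$, is nonnegative, coincides with $u$ on $B_\rho(x_0)$, and satisfies $I_kv\ge -\gamma v$ weakly in $B_\rho(x_0)$ with $\gamma:=\|c\|_{L^\infty(B_{2\rho}(x_0))}<\infty$ (the extra nonnegative term $g_{\eta,u}$ from Lemma~\ref{localization} only helps). Then, as in the proof of Proposition~\ref{wmp-b}, I would test the extended weak inequality (Remark~\ref{rem-weak-defi}) against truncations $w_\eps=\min\{(\eps-v)^+,\eps\psi\}$, with $\psi$ a fixed Lipschitz cutoff supported in $B_\rho(x_0)$ and equal to $1$ on $B_{\rho/2}(x_0)$; using the sign properties of $b_k$ (Lemma~\ref{properties1}(2)) and the pointwise inequality $(v(x)-v(y))\bigl((\eps-v)^+(x)-(\eps-v)^+(y)\bigr)\le-\bigl((\eps-v)^+(x)-(\eps-v)^+(y)\bigr)^2$, this turns into a Caccioppoli-type energy bound for $(\eps-v)^+$ on $B_{\rho/2}(x_0)$ with controlled error terms. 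Iterating over the levels $\eps$, and invoking Proposition~\ref{constant} to rule out the degenerate case (where a vanishing energy would force $v$ constant, i.e.\ $v\equiv0$ on $B_\rho(x_0)$, the case already excluded), produces the positive lower bound.

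I expect step (B) to be the main obstacle. In the present generality there is no fractional Sobolev embedding, so the implication ``$u$ positive on a set of positive measure $\Rightarrow$ $u$ bounded below by a positive constant nearby'' cannot be read off from the usual De~Giorgi or Moser iteration and must be run using only the coercivity of $b_{k,\cdot}$ and the convolution-type iteration of Lemma~\ref{iterationlemma}; correctly transferring all the test-function computations to the enlarged class $\cV^k(\Omega)$ via Lemma~\ref{localization} and the density results of Section~\ref{sec:dense} is the other delicate point. The remaining, purely qualitative, propagation in (A) along a connected $\Omega$, respectively into all of $\R^N$ when $\essinf_{B_r(0)}j>0$, is then routine.
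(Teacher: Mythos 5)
There is a genuine gap: your step~(B), the quantitative local-positivity estimate, is the entire content of the nonlocal strong maximum principle, and you leave it unresolved. You yourself flag it as ``the main obstacle'', but a correct proof must resolve it, not merely sketch a direction. The truncation $w_\eps=\min\{(\eps-v)^+,\eps\psi\}$ is a plausible ansatz, but it is not a function of $v$ alone (because of the cutoff $\psi$), so Lemma~\ref{properties1}(2) and the pointwise monotonicity inequality you invoke do not apply to it directly; you would first have to separate the two truncations, estimate the resulting cross terms, and then specify an iteration in $\eps$. As you note, the usual De~Giorgi/Moser mechanism is unavailable because there is no Sobolev embedding for $D^k$, and ``invoking Proposition~\ref{constant} to rule out the degenerate case'' only addresses the limiting $\eps$-independent situation, not the quantitative step that actually produces $\essinf_{B_{\rho/2}(x_0)}u>0$. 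Step~(A) has a similar, milder issue: the propagation of vanishing via $u(y)\,j(x-y)=0$ needs $\{j>0\}$ to carry positive measure near $0$ ``in the right places'' for the chaining to advance; your suggestion to replace $j$ by $q\ast q$ is the right instinct, but the passage in Proposition~\ref{constant}/Lemma~\ref{iterationlemma} works because one has a vanishing form $b_{k,\Omega}(u)=0$, which is not what you have here, and you do not explain how to adapt it.

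The paper proceeds along an entirely different and much shorter route: it reduces to \cite[Theorems~2.5 and~2.6]{JW19} by approximation. The density and localization machinery of Section~\ref{sec:dense} is exactly what is needed to pass from the smaller class of supersolutions in \cite{JW19} to the enlarged class $\cV^k(\Omega)$ used here, and the only new observation is that the nonintegrability assumption~\eqref{assumption2} forces
\[
\inf_{x\in B_r(x_0)}\kappa_{k,B_r(x_0)}(x)\to\infty \quad\text{as }r\to0,
\]
so the smallness condition on $\|c^+\|_{L^\infty}$ required by \cite{JW19} is automatically satisfied on sufficiently small balls, which is why an \emph{arbitrary} $c$ with $\|c^+\|_\infty<\infty$ is admissible. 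Your two-step scheme, if completed, would amount to re-proving the theorems of \cite{JW19} in the present generality --- a legitimate but far more laborious route, and the hard quantitative core of it (your step (B)) is precisely what those cited theorems already supply.
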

\begin{proof}
This statement follows by approximation from \cite[Theorem 2.5 and 2.6]{JW19}. Here, the statement $j\notin L^1(\R^N)$ comes into play since we need
\[
\inf_{x\in B_r(x_0)}\kappa_{k,B_r(x_0)}(x)\to \infty \quad\text{for $r\to 0$}
\]
to conclude the statement for arbitrary $c$ as stated.
\end{proof}

\begin{proof}[Proof of Proposition \ref{smp}]
The statement follows immediately from Proposition \ref{smp-b}.
\end{proof}

\section{On Boundedness}\label{sec:bdd}

In the following, let $h\ast u(x)=\int_{\R^N} h(x-y)u(y)\ dy$ as usual denote the convolution of two functions.

\begin{thm}
	\label{thm-bdd}
	Assume $k$ is such that
	\begin{equation}\label{assumption2}
	\text{the symmetric lower bound $j$ defined in \eqref{defi-j} satisfies }\ \int_{\R^N}j(z)\ dz=\infty
	\end{equation}
	and it holds 
\begin{equation}\label{assumption2d}
	\sup_{x\in \R^N}\int_{K\setminus B_{\epsilon}(x)}k(x,y)^2\ dy<\infty\quad\text{for all $K\subset\subset \R^N$ and $\epsilon>0$.}
\end{equation}
	Let $\Omega\subset \R^N$ be an open set. Let $f\in L^{\infty}(\Omega)$, $h\in L^1(\R^N)\cap L^{2}(\R^N)$, and let $u\in \cV^k_{loc}(\Omega)$ satisfy in weak sense 
	$$
	I_ku\leq \lambda u+h\ast u+f \quad\text{in $\Omega$ for some $\lambda>0$.}
	$$
	If $u^+\in L^{\infty}(\R^N\setminus \Omega')$ for some $\Omega'\subset\subset\Omega$, then $u^+\in L^{\infty}(\R^N)$ and there is $C=C(\Omega,\Omega',k,h,\lambda)>0$ such that
	$$
	\|u^+\|_{L^{\infty}(\Omega')}\leq C\Big(\|f\|_{L^{\infty}(\Omega)}+\|u\|_{L^{2}(\Omega')}+\|u^+\|_{L^{\infty}(\R^N\setminus \Omega')}\Big).
	$$
\end{thm}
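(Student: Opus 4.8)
I would run a De Giorgi--Stampacchia level‑truncation argument; the one genuinely non‑classical point is that the energy form $b_k$ carries no Sobolev embedding, and this is where a generalization of the $\delta$-decomposition of \cite{FJW20} enters. First I would fix the geometry: set $M_0:=\|u^+\|_{L^\infty(\R^N\setminus\Omega')}$ and choose $\Omega'\subset\subset\Omega''\subset\subset\Omega$ with $\Omega''$ bounded Lipschitz. For every level $\ell\ge M_0$ the truncation $w_\ell:=(u-\ell)^+$ is supported in $A_\ell:=\{u>\ell\}\subset\overline{\Omega'}$ (because $u\le\ell$ a.e.\ on $\R^N\setminus\Omega'$), and by Lemma \ref{properties1}(3),(5), Lemma \ref{consequences of assumption2} and the density statements of Section \ref{sec:dense}, $w_\ell\in\cD^k(\Omega'')$ is an admissible test function in the weak inequality. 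Testing $I_ku\le\lambda u+h\ast u+f$ against $w_\ell\ge0$, using $(u(x)-u(y))(w_\ell(x)-w_\ell(y))\ge(w_\ell(x)-w_\ell(y))^2$ and $u=w_\ell+\ell$ on $A_\ell$, gives the Caccioppoli‑type inequality
\[
b_k(w_\ell)\ \le\ \lambda\|w_\ell\|_{L^2(\Omega')}^2+(\lambda\ell+B)\|w_\ell\|_{L^1(\Omega')},\qquad \ell\ge M_0,
\]
with $B:=\|h\ast u\|_{L^\infty(\Omega')}+\|f\|_{L^\infty(\Omega)}$; one checks (using $h\in L^1\cap L^2(\R^N)$, $u\in\cV^k_{loc}(\Omega)$, $u^+\in L^\infty(\R^N\setminus\Omega')$) that $B<\infty$ and $B\le C(\Omega,\Omega',k,h)\bigl(\|f\|_{L^\infty(\Omega)}+\|u\|_{L^2(\Omega')}+\|u^+\|_{L^\infty(\R^N\setminus\Omega')}\bigr)$.

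The heart of the matter is a replacement for the Sobolev inequality: I claim there is a function $m\mapsto\Psi(m)$ with $\Psi(m)\to+\infty$ as $m\to0^+$ such that $b_k(w)\ge\Psi(|\supp w|)\,\|w\|_{L^2(\R^N)}^2$ for every $w\in D^k(\R^N)$ with $|\supp w|$ sufficiently small. To prove it I would decompose $j=j\,1_{\R^N\setminus B_1}+\sum_{i\ge0}q_i$, $q_i:=j\,1_{B_{2^{-i}}\setminus B_{2^{-i-1}}}$; the $q_i$ have pairwise disjoint supports, so $b_k(w)\ge b_j(w)\ge\sum_{i\ge0}b_{q_i}(w)$. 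By \eqref{assumptions} and \eqref{assumption2d} each $q_i\in L^1\cap L^2(\R^N)$, so $q_i\ast q_i\in C_0(\R^N)$ is supported in $\overline{B_{2^{-i+1}}}$ with $(q_i\ast q_i)(0)=\|q_i\|_{L^2}^2>0$; hence $q_i\ast q_i\ge\tfrac12\|q_i\|_{L^2}^2\,1_{B_{\rho_i}}$ for some $\rho_i\asymp 2^{-i}$. Combining the convolution estimate [Lemma \ref{iterationlemma}], the elementary bound $\iint_{|x-y|<\rho}(w(x)-w(y))^2\,dx\,dy\ge2(|B_\rho|-|\supp w|)\|w\|_{L^2}^2$, and Cauchy--Schwarz ($\|q_i\|_{L^1}^2\le\|q_i\|_{L^2}^2|B_{2^{-i}}|$) yields $b_{q_i}(w)\ge c\,\|q_i\|_{L^1}\,\|w\|_{L^2}^2$ whenever $|\supp w|\le\tfrac12|B_{\rho_i}|$; summing over those indices $i$ and using $\sum_{i\ge0}\|q_i\|_{L^1}=\int_{0<|z|<1}j\,dz=\infty$ from \eqref{assumption2} gives the claim, with $\Psi(m)$ comparable to $\int_{\{c\,m^{1/N}<|z|<1\}}j\,dz$ (at least logarithmically large in $1/m$ for the kernels of the introduction).

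The iteration now runs as follows. Since $u\in L^2(\Omega')$, $|A_\ell|\to0$; fix $M_1\ge M_0$ so large that $\Psi(|A_\ell|)\ge2\lambda$ and the previous claim applies for $\ell\ge M_1$. For such $\ell$ the bound $b_k(w_\ell)\ge\Psi(|A_\ell|)\|w_\ell\|_{L^2(\Omega')}^2$ lets me absorb the zeroth‑order term $\lambda\|w_\ell\|_{L^2(\Omega')}^2$ into the left‑hand side of the Caccioppoli inequality, leaving $\|w_\ell\|_{L^2(\Omega')}^2\le\frac{2(\lambda\ell+B)}{\Psi(|A_\ell|)}\|w_\ell\|_{L^1(\Omega')}$; then $\|w_\ell\|_{L^1(\Omega')}\le|A_\ell|^{1/2}\|w_\ell\|_{L^2(\Omega')}$ and $(h-\ell)^2|A_h|\le\|w_\ell\|_{L^2(\Omega')}^2$ give $|A_h|\le\frac{4(\lambda\ell+B)^2}{(h-\ell)^2\Psi(|A_\ell|)^2}|A_\ell|$ for $h>\ell\ge M_1$. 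Taking $K\ge M_1$ and the \emph{bounded} levels $\ell_n:=2K\bigl(1-\tfrac1{n+1}\bigr)\nearrow 2K$, the quantities $m_n:=|A_{\ell_n}|$ satisfy $m_{n+1}\le C_0\,(n+2)^4\,\Psi(m_n)^{-2}\,m_n$; since $\Psi(m_n)$ diverges as $m_n\to0$ at a definite rate, this recursion forces $m_n\to0$ provided $K$ is large enough that $m_0=|A_K|$ is small enough — and $|A_K|\le K^{-2}\|u\|_{L^2(\Omega')}^2$ shows that the needed size of $K$ depends only on $\Omega,\Omega',k,h,\lambda$ and on $\|u\|_{L^2(\Omega')}$, $\|u^+\|_{L^\infty(\R^N\setminus\Omega')}$, $\|f\|_{L^\infty(\Omega)}$. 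Hence $|\{u\ge2K\}|=\lim_n m_n=0$, i.e.\ $u^+\le2K$ a.e.\ on $\Omega'$, with $2K$ dominated by the asserted right‑hand side; that $u^+\in L^\infty(\R^N)$ then follows from the hypothesis $u^+\in L^\infty(\R^N\setminus\Omega')$.

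The main obstacle is exactly the second step: no Sobolev‑type embedding converts energy control into a gain of integrability for small‑order kernels, so the usual De Giorgi mechanism is unavailable. The $\delta$-decomposition substitutes for it by extracting from $b_k$ a Poincaré inequality on sets of small measure whose constant blows up as the measure shrinks — and it is precisely here that $\int j=\infty$ is used. This single mechanism both absorbs the term $\lambda u$ at high levels and, although only logarithmically strong, supplies the gain needed to close the truncation scheme with bounded levels. The remaining points — making the rate of $\Psi$ quantitative, the estimate $\rho_i\asymp2^{-i}$, the finiteness of $\|h\ast u\|_{L^\infty(\Omega')}$ under the stated hypotheses (splitting $u$ into its part in a neighbourhood of $\Omega'$, controlled in $L^2$, and its part far away, where $u^+$ is bounded and the tail is controlled by $u\in\cV^k_{loc}$), and the bookkeeping of the final constant — are routine.
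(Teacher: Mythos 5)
Your strategy is a genuine departure from the paper's: the paper does \emph{not} run a De Giorgi iteration. It splits $k=J_\delta+k_\delta$ with $J_\delta$ supported in $\{|x-y|<\delta\}$, uses that $c_\delta:=\inf_x\int k_\delta(x,y)\,dy\to\infty$ as $\delta\to0$ (this is where $\int j=\infty$ enters), fixes one $\delta$ with $c_\delta>\lambda$, and then works with a \emph{single} carefully chosen level $t$: testing with $\mu^2(v-t)^+$, the tail part $b_{k_\delta}(v,\cdot)$ produces an explicit zeroth-order term $(\lambda-c_\delta)\,t$ which is made $\le -C_6A$ for $t\gtrsim A$, while the short-range part $J_\delta$ gives an ordinary, $t$-independent Poincar\'e inequality on $\cD^{J_\delta}(\Omega'')$. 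This forces $(v-t)^+\equiv0$ on $\Omega'$ in one shot. In contrast you try to extract from the energy a Poincar\'e constant that blows up as $|\supp w|\to0$ and then iterate; this is a legitimate idea, but as written it has two gaps.

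First, the estimate $\rho_i\asymp 2^{-i}$ is unjustified. Continuity of $q_i\ast q_i$ only gives $q_i\ast q_i\ge\tfrac12\|q_i\|_{L^2}^2$ on \emph{some} ball $B_{\rho_i}$, but $\rho_i$ is controlled by the modulus of continuity of $q_i\ast q_i$ at $0$, which depends on the fine structure of $j$ and is not controlled by \eqref{assumptions}, \eqref{assumption2}, \eqref{assumption2d}. If, for instance, $j$ is concentrated on a thin spherical shell inside the annulus $B_{2^{-i}}\setminus B_{2^{-i-1}}$ (shell width $\epsilon\ll 2^{-i}$), then $q_i\ast q_i$ drops to half its central value already at $|z|\sim\epsilon$, so $\rho_i$ can be arbitrarily smaller than $2^{-i}$; your subsequent Cauchy--Schwarz step, which needs $|B_{\rho_i}|\gtrsim|B_{2^{-i}}|$, then fails. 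Second, even granting the small-support Poincar\'e inequality $b_k(w)\ge\Psi(|\supp w|)\|w\|_{L^2}^2$, the iteration does not close. Your recursion $m_{n+1}\le C_0(n+2)^4\,\Psi(m_n)^{-2}\,m_n$ is, for the kernels of interest here (e.g.\ $j=1_{B_1}|z|^{-N}$, for which $\Psi(m)\sim\log(1/m)$), of the form $m_{n+1}\le C_0(n+2)^4(\log(1/m_n))^{-2}m_n$. Since the $m_n=|A_{\ell_n}|$ are automatically nonincreasing, this inequality is perfectly compatible with $m_n\downarrow m_\infty>0$: once $C_0(n+2)^4\ge\Psi(m_\infty)^2$, the recursion imposes no constraint at all. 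A logarithmic gain is too weak to beat the polynomial weight coming from the level spacing, so the recursion does not force $m_n\to0$ no matter how small $m_0$ is. (With $\Psi(m)\sim m^{-\epsilon}$, i.e.\ an honest fractional Laplacian, it would close; but that is precisely the regime the paper is not interested in.) The paper's single-level argument sidesteps both problems by deriving a \emph{linear} absorption in $t$ from the tail integral $\int k_\delta(x,\cdot)$, rather than a superlinear gain from small support, and by using only the standard Poincar\'e inequality for $J_\delta$ on a fixed bounded set.
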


\begin{proof}
	Let $\Omega_1,\Omega_2,\Omega_3\subset\R^N$ be with Lipschitz boundary and such that 
	$$
	\Omega'\subset\subset\Omega_1\subset\subset\Omega_2\subset\subset\Omega_3\subset\subset \Omega.
	$$
	Let $\eta\in C^{0,1}_c(\Omega_3)$ such that $0\leq \eta\leq 1$ and $\eta=1$ on $\Omega_2$. Put $v=\eta u$ and, for $\delta>0$, denote $J_{\delta}(x,y):=1_{B_{\delta}(0)}(x-y)k(x,y)$ and $k_{\delta}(x,y)=k(x,y)-J_{\delta}(x,y)$. Note that by Assumption \eqref{assumptions} it follows that $y\mapsto k_{\delta}(x,y)\in L^1(\R^N)$ for all $x\in \R^N$. Moreover, by Assumption \eqref{assumption2}
	$$
	c_{\delta}:=\inf_{x\in \R^N}\int_{\R^N}k_{\delta}(x,y)\ dy\geq \int_{\R^N\setminus B_{\delta}(0)}j(z)\ dz\to \infty \quad\text{for $\delta\to 0$.}
	$$
		Hence, we may fix $\delta>0$ such that
		$$
		c_{\delta}>\lambda.
		$$
		In the following, $C_i>0$, $i=1,\ldots$ denote constants depending on $\Omega'$, $\Omega_i$, $\lambda$, $\delta$, $\Omega$, $\eta$, $k$, and $h$ but may vary from line to line ---clearly, by the choices of these dependencies are actually only through $\lambda$, $\Omega$, $\Omega'$, $\eta$, $k$, and $h$. First note that by Lemma \ref{localization} we have in weak sense
\begin{align*}
	I_kv\leq \lambda u+h\ast u+\tilde{f}\quad\text{in $\Omega_1$}\quad\text{with}\quad \tilde{f}(x)=f(x)+\int_{\R^N\setminus \Omega_2}(1-\eta(y))u(y)k(x,y)\ dy.
	\end{align*}
	In the following, put 
	$$
	A:=\|f\|_{L^{\infty}(\Omega)}+\|u\|_{L^{2}(\Omega')}+\|u^+\|_{L^{\infty}(\R^N\setminus\Omega')}.
	$$
	Then note that for $x\in \R^N$ we have
	$$
|h\ast u(x)|\leq \|h\|_{L^{2}(\R^N)}\|u\|_{L^{2}(\Omega')}+\|u^+\|_{L^{\infty}(\R^N\setminus\Omega')}\|h\|_{L^1(\R^N)}\leq C_1A
	$$
	and, since $\sup\limits_{x\in \Omega_1}\int_{\R^N\setminus \Omega_2}(1-\eta(y))k(x,y)\ dy\leq C_2\sup\limits_{x\in \Omega_1}\int_{\R^N\setminus \Omega_2}\min\{1,|x-y|^{\sigma}\}k(x,y)\ dy<\infty$, it also holds that
	$$
	\|\tilde{f}\|_{L^{\infty}(\Omega_1)}\leq \|f\|_{L^{\infty}(\Omega)}+\|u^+\|_{L^{\infty}(\R^N\setminus \Omega_1)}C_2\leq C_3A.
	$$
	Hence, since $u=v$ in $\Omega_1$, we have in weak sense
	\begin{align*}
	I_kv\leq \lambda v+C_4A\quad\text{in $\Omega_1$.}
	\end{align*}
	Next, let $\mu\in C^{\infty}_c(\Omega'')$ for some $\Omega'\subset\subset\Omega''\subset\subset\Omega_1$ such that $0\leq \mu\leq 1$, $\mu=1$ on $\Omega'$, and $\mu=0$ on $\R^N\setminus \Omega''$. Let $\phi_t=\mu^2(v-t)^+\in \cD^k(\Omega'')$ for $t>0$ and note that
	\begin{equation}
	\label{bdd-1}
	b_k(v,\phi_t)\leq\int_{\Omega''}(\lambda v(x)+C_4A)\phi_t(x)\ dx.
	\end{equation} 
 Fix $t>0$ such that
	\begin{align*}
	 t\geq \|u^+\|_{L^{\infty}(\R^N\setminus \Omega')}&\quad\text{and}\quad C_6A+(\lambda-c_{\delta})t\leq 0,\quad\text{where}\\
	C_6&=C_4+C_5\quad\text{with}\quad C_5=\sup_{x\in \Omega'}\left(\int_{\Omega'}k_{\delta}^2(x,y)\ dy\right)^{1/2}+\sup_{x\in \Omega'}\int_{\R^N\setminus\Omega'}k_{\delta}(x,y)\ dy.
	\end{align*}
	That is, we fix 
	$$
	t= A\Big(1+\frac{C_6}{c_{\delta}-\lambda}\Big).
	$$
	Then with \eqref{bdd-1}
	\begin{align*}
	&b_{J_{\delta}}(v,\phi_t)=b_k(v,\phi_t)-b_{k_{\delta}}(v,\phi_t)\\
	&\leq \int_{\Omega''}\lambda v(x)\phi_t(x)+C_4A\phi_t(x)\ dx-\int_{\R^N}v(x)\phi_t(x)\int_{\R^N}k_{\delta}(x,y)\ dydx+\int_{\R^N}\phi_t(x)\int_{\R^N}v(y)k_{\delta}(x,y)\ dy\ dx.
	\end{align*}
	Note here, that for $x\in \R^N$ we have by the integrability assumptions on $k_{\delta}$ and $k$
	\begin{align*}
	\int_{\R^N}v(y)k_{\delta}(x,y)\ dy&\leq \int_{\R^N}u(y)k_{\delta}(x,y)\ dy\leq C_5(\|u\|_{L^2(\Omega')}+\|u^+\|_{L^{\infty}(\R^N\setminus \Omega')})\leq C_5A
		\end{align*}
		so that using that $v\geq t$ in $\supp\,\phi_t$ we have
\begin{equation}
	\label{bdd-2}
	b_{J_{\delta}}(v,\phi_t)\leq  \int_{\Omega''}(C_6A+(\lambda-c_{\delta})v(x))\phi_t(x)\ dx\leq  (C_6A+(\lambda-c_{\delta})t)\int_{\Omega''}\phi_t(x)\ dx.
	\end{equation}
	On the other hand, with $v_t(x)=v(x)-t$, we have
	\begin{align*}
	(v(x)-v(y))&(\phi_t(x)-\phi_t(y))-(\mu(x)v_t^+(x)-\mu(y)v_t^+(y))^2\\
	&=2\mu(x)\mu(y)v_t^+(x)v_t^+(y)-v_t(y)\mu^2(x)v_t^+(x)-\mu^2(y)v_t^+(y)v_t(x)\\
	&=-v_t^+(x)v_t^+(y)(\mu(x)-\mu(y))^2+v_t^-(y)\mu^2(x)v_t^+(x)+\mu^2(y)v_t^+(y)v_t^-(x)\\
	&\geq -v_t^+(x)v_t^+(y)(\mu(x)-\mu(y))^2.
	\end{align*}
	Hence with Poincar\'e's inequality, using that by Assumption \ref{assumption2} there is for any $K\subset \R^N$ open and bounded, some $C>0$ such that $b_{J_{\delta}}(u)\geq C\|u\|_{L^2(\R^N)}^2$ for $u\in \cD^{J_{\delta}}(K)$, we find for some constant $C_7$
	\begin{align}
	b_{J_{\delta}}(v,\phi_t)&\geq b_{J_{\delta}}(\mu v_t^+)-\frac{1}{2}\int_{\R^N}\int_{\R^N} v_t^+(x)v_t^+(y)(\mu(x)-\mu(y))^2J_{\delta}(x,y)\ dxdy\notag\\
	&\geq C_7\int_{\R^N}\mu^2(x)(v_t^+(x))^2\ dx-\frac{1}{2}\int_{\R^N}\int_{\R^N} v_t^+(x)v_t^+(y)(\mu(x)-\mu(y))^2J_{\delta}(x,y)\ dxdy\\
	&= C_7\int_{\R^N}\mu^2(x)(v_t^+(x))^2\ dx-\frac{1}{2}\int_{\Omega'}\int_{\Omega'} v_t^+(x)v_t^+(y)(\mu(x)-\mu(y))^2J_{\delta}(x,y)\ dxdy\\
	&= C_7\int_{\R^N}\mu^2(x)(v_t^+(x))^2\ dx\geq C_7\int_{\Omega'}(v_t^+(x))^2\ dx.\label{bdd-3}
	\end{align}
	Combining \eqref{bdd-3} and \eqref{bdd-2} we have
	\begin{align*}
	C_7\int_{\Omega'}&(v_t^+(x))^2\ dx\leq \Big(C_6A+(\lambda-c_{\delta})t\Big)\int_{\Omega''} \phi_t(x)\ dx\leq 0.
	\end{align*}
	Hence $v_t^+=0$ in $\Omega'$ and thus $u=v\leq t=A\cdot C_{8}$ in $\Omega'$ as claimed. 
\end{proof}

\begin{cor}
	\label{thm-bdd2}
	If in the situation of Theorem \ref{thm-bdd} we have in weak sense $I_ku=\lambda u+h\ast u+f$ in $\Omega$, then we have $u\in L^{\infty}(\Omega')$ and there is $C=C(\Omega,\Omega',k,\lambda,h)>0$ such that
	$$
	\|u\|_{L^{\infty}(\Omega')}\leq C\Big(\|f\|_{L^{\infty}(\Omega)}+\|u\|_{L^{2}(\Omega')}+\|u\|_{L^{\infty}(\R^N\setminus \Omega')}\Big).
	$$
\end{cor}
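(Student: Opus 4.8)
The plan is to deduce this directly from Theorem \ref{thm-bdd} by applying it both to $u$ and to $-u$, exploiting the linearity of the equation. First I would note that if $I_ku=\lambda u+h\ast u+f$ holds weakly in $\Omega$, then in particular $I_ku\leq \lambda u+h\ast u+f$ weakly in $\Omega$, and since by hypothesis $u\in L^{\infty}(\R^N\setminus\Omega')$ we have $u^+\in L^{\infty}(\R^N\setminus\Omega')$. Hence Theorem \ref{thm-bdd} applies and gives $u^+\in L^{\infty}(\R^N)$ with
$$
\|u^+\|_{L^{\infty}(\Omega')}\leq C\Big(\|f\|_{L^{\infty}(\Omega)}+\|u\|_{L^{2}(\Omega')}+\|u^+\|_{L^{\infty}(\R^N\setminus \Omega')}\Big),
$$
where $C=C(\Omega,\Omega',k,\lambda,h)>0$ depends only on the indicated data and not on $u$ or $f$.

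Next I would replace $u$ by $-u$. Since $\cV^k_{loc}(\Omega)$ is a vector space, $-u\in\cV^k_{loc}(\Omega)$, and from the equation we get $I_k(-u)=\lambda(-u)+h\ast(-u)+(-f)$ weakly in $\Omega$, in particular $I_k(-u)\leq \lambda(-u)+h\ast(-u)+(-f)$ weakly, with $-f\in L^{\infty}(\Omega)$ and $(-u)^+=u^-\in L^{\infty}(\R^N\setminus\Omega')$. Applying Theorem \ref{thm-bdd} again yields $u^-\in L^{\infty}(\R^N)$ with
$$
\|u^-\|_{L^{\infty}(\Omega')}\leq C\Big(\|f\|_{L^{\infty}(\Omega)}+\|u\|_{L^{2}(\Omega')}+\|u^-\|_{L^{\infty}(\R^N\setminus \Omega')}\Big),
$$
the constant $C$ being the same since it is independent of the particular subsolution.

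Finally I would combine the two estimates: writing $u=u^+-u^-$ gives $u\in L^{\infty}(\Omega')$ and $\|u\|_{L^{\infty}(\Omega')}\leq \|u^+\|_{L^{\infty}(\Omega')}+\|u^-\|_{L^{\infty}(\Omega')}$; bounding $\|u^{\pm}\|_{L^{\infty}(\R^N\setminus\Omega')}\leq\|u\|_{L^{\infty}(\R^N\setminus\Omega')}$ on the right-hand sides and adding yields the claimed inequality after renaming the constant. There is essentially no obstacle beyond Theorem \ref{thm-bdd} itself; the only point worth checking is that the constant it produces is unaffected by the substitution $u\mapsto -u$, $f\mapsto -f$, which is immediate since that constant depends only on $\Omega,\Omega',k,\lambda,h$.
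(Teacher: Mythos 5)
Your proof is correct and follows exactly the same route as the paper's one-line argument: replace $u$ by $-u$ and $f$ by $-f$ in Theorem \ref{thm-bdd} and combine the two one-sided bounds. The only cosmetic remark is that $\|u\|_{L^\infty(\Omega')}=\max\{\|u^+\|_{L^\infty(\Omega')},\|u^-\|_{L^\infty(\Omega')}\}$, so the sum you use, while valid, is slightly wasteful.
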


\begin{proof}
	This follows by replacing $u$ with $-u$ (and $f$ with $-f$) in the statement of Theorem \ref{thm-bdd}.
\end{proof}

\begin{thm}\label{thm-bdd-full-domain}
	If in the situation of Theorem \ref{thm-bdd} we have in weak sense $I_ku=\lambda u+h\ast u+f$ in $\Omega$ and $u\in \cD^k(\Omega)$, then we have $u\in L^{\infty}(\Omega)$ and there is $C=C(\Omega,k,\lambda,h)>0$ such that
	$$
	\|u\|_{L^{\infty}(\Omega)}\leq C\Big(\|f\|_{L^{\infty}(\Omega)}+\|u\|_{L^{2}(\Omega)}\Big).
	$$
\end{thm}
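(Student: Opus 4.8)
The plan is to run a one–step De Giorgi/truncation argument directly on the \emph{global} form $b_k$, exploiting that $u\in\cD^k(\Omega)$ already vanishes on $\R^N\setminus\Omega$; this is exactly the argument behind Theorem \ref{thm-bdd}, but \emph{without} the localization (no cut-offs $\eta,\mu$ are needed). First I would note that, since $u\in\cD^k(\Omega)\subset L^2(\Omega)$, $f\in L^\infty(\Omega)\subset L^2(\Omega)$, and $h\ast u\in L^\infty(\R^N)$ with $\|h\ast u\|_{L^\infty(\R^N)}\le\|h\|_{L^2(\R^N)}\|u\|_{L^2(\Omega)}$, both sides of $I_ku=\lambda u+h\ast u+f$ define continuous linear functionals on $\cD^k(\Omega)$; hence by the density of $C^\infty_c(\Omega)$ in $\cD^k(\Omega)$ (Theorem \ref{main-thm-density}) the weak equation upgrades to
\[
b_k(u,v)=\int_\Omega(\lambda u+h\ast u+f)v\,dx\qquad\text{for all }v\in\cD^k(\Omega).
\]
In particular, for $t>0$ the truncation $\phi_t:=(u-t)^+$ lies in $\cD^k(\Omega)$ by Lemma \ref{properties1}(3) (as $s\mapsto(s-t)^+$ is Lipschitz and vanishes at $0$), so it is an admissible test function.

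Next I would bring in the $\delta$-decomposition $k=J_\delta+k_\delta$ with $J_\delta(x,y)=1_{B_\delta(0)}(x-y)k(x,y)$ and $k_\delta=k-J_\delta$, so that $b_k=b_{J_\delta}+b_{k_\delta}$ on $\cD^k(\Omega)\times\cD^k(\Omega)$. By \eqref{assumptions}, $k_\delta(x,\cdot)\in L^1(\R^N)$ uniformly in $x$, and by \eqref{assumption2}
\[
c_\delta:=\inf_{x\in\R^N}\int_{\R^N}k_\delta(x,y)\,dy\ \ge\ \int_{\R^N\setminus B_\delta(0)}j(z)\,dz\ \longrightarrow\ \infty\quad(\delta\to0^+),
\]
so I may fix $\delta>0$ with $c_\delta>\lambda$. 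Writing $\kappa_\delta(x):=\int_{\R^N}k_\delta(x,y)\,dy\ge c_\delta$ and using, as in the proof of Theorem \ref{thm-bdd}, the representation $b_{k_\delta}(u,\phi_t)=\int\phi_t(x)\big(u(x)\kappa_\delta(x)-\int u(y)k_\delta(x,y)\,dy\big)dx$, the tested equation becomes
\[
b_{J_\delta}(u,\phi_t)=\int_\Omega\phi_t(x)\big[\lambda u(x)-u(x)\kappa_\delta(x)\big]dx+\int_\Omega\phi_t\,(h\ast u+f)\,dx+\int_\Omega\phi_t(x)\int_\Omega u(y)k_\delta(x,y)\,dy\,dx.
\]

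Then I would estimate the right-hand side from above. On $\{u>t\}$ one has $u>t>0$ and $\kappa_\delta\ge c_\delta>\lambda$, hence $\lambda u-u\kappa_\delta\le u(\lambda-c_\delta)\le t(\lambda-c_\delta)\le0$; by Cauchy--Schwarz and \eqref{assumption2d} (with a bounded $K\supset\overline\Omega$ and $\epsilon=\delta$) one gets $\big|\int_\Omega u(y)k_\delta(x,y)\,dy\big|\le M_\delta^{1/2}\|u\|_{L^2(\Omega)}$ where $M_\delta:=\sup_x\int_{\Omega\setminus B_\delta(x)}k(x,y)^2\,dy<\infty$, and $|h\ast u+f|\le\|h\|_{L^2(\R^N)}\|u\|_{L^2(\Omega)}+\|f\|_{L^\infty(\Omega)}$. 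Putting $A:=\|f\|_{L^\infty(\Omega)}+\|u\|_{L^2(\Omega)}$ and $C_*:=M_\delta^{1/2}+\|h\|_{L^2(\R^N)}+1$ (depending only on $\Omega,k,\lambda,h$), this gives $b_{J_\delta}(u,\phi_t)\le\big(t(\lambda-c_\delta)+C_*A\big)\int_\Omega\phi_t\,dx$, which is $\le0$ once $t\ge t_0:=C_*A/(c_\delta-\lambda)$. On the other hand the elementary pointwise inequality $(a-b)\big((a-t)^+-(b-t)^+\big)\ge\big((a-t)^+-(b-t)^+\big)^2$ gives $b_{J_\delta}(u,\phi_t)\ge b_{J_\delta}(\phi_t)$, and since the symmetric lower bound of $J_\delta$ is $1_{B_\delta(0)}j\notin L^1(\R^N)$, the Poincar\'e-type inequality used in the proof of Theorem \ref{thm-bdd} yields $b_{J_\delta}(\phi_t)\ge c(\Omega,k,\delta)\|\phi_t\|_{L^2(\Omega)}^2$ with $c(\Omega,k,\delta)>0$. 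Taking $t=t_0$ forces $\|(u-t_0)^+\|_{L^2(\Omega)}=0$, i.e. $u\le t_0$ a.e.\ in $\Omega$; applying the same reasoning to $-u$ (which solves the equation with data $-f$, $-h\ast u$, the same $\lambda$ and the same $A$) gives $u\ge-t_0$, whence $\|u\|_{L^\infty(\Omega)}\le t_0=C(\Omega,k,\lambda,h)\big(\|f\|_{L^\infty(\Omega)}+\|u\|_{L^2(\Omega)}\big)$.

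The routine parts are the absolute convergence of all integrals above (controlled by $u\in L^2$, $\kappa_\delta\in L^\infty$, \eqref{assumptions}, \eqref{assumption2d}) and the two elementary pointwise inequalities. The step that needs the most care is the admissibility of $\phi_t=(u-t)^+$: this requires extending the weak formulation from $C^\infty_c(\Omega)$ to all of $\cD^k(\Omega)$, which is where Theorem \ref{main-thm-density} — and hence the Lipschitz regularity of $\partial\Omega$, or, in the intended applications to eigenfunctions and to the Poisson problem, the fact that the identity already holds on $\cD^k(\Omega)$ by construction — enters; and one must check that the Poincar\'e constant $c(\Omega,k,\delta)$ for $b_{J_\delta}$ is positive with the stated dependence, which again rests on \eqref{assumption2} through $\int_{B_\delta(0)}j=\infty$.
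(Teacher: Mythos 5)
Your argument is correct and is precisely the spelled-out version of the paper's one-line proof, which says to rerun the proof of Theorem \ref{thm-bdd} with the global test function $u_t^+=(u-t)^+$ in place of the localized $\phi_t=\mu^2(v-t)^+$: same $\delta$-decomposition $k=J_\delta+k_\delta$ with $c_\delta>\lambda$, same rewriting of $b_{k_\delta}$, same Poincar\'e bound for $b_{J_\delta}$ on $\cD^{J_\delta}$ of a bounded set. Your closing caveat about admissibility of $(u-t)^+$ as a test function is well placed and is implicitly present in the paper as well; in the actual applications (eigenfunctions, Poisson) the weak identity is given directly against all of $\cD^j(\Omega)$, so no density step is needed there.
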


\begin{proof}
	Using in the proof of Theorem \ref{thm-bdd} the test-function $u_t^+$ instead of $\phi_t$ (and similarly for Corollary \ref{thm-bdd2}), we find
	$$
	\|u\|_{L^{\infty}(\Omega)}\leq C\Big(\|f\|_{L^{\infty}(\Omega)}+\|u\|_{L^{2}(\Omega)}\Big).
	$$
	as claimed.
\end{proof}

\section{On differentiability of solutions}\label{sec:reg}

In the following, $\Omega\subset \R^N$ is an open bounded set and $k$ satisfies through out the assumptions \eqref{assumptions} with some $\sigma<\frac12$, \eqref{assumption2}, and \eqref{assumption2d}. Moreover, we assume that there is $j:\R^N\to[0,\infty]$ such that $k(x,y)=j(x-y)$ for $x,y\in \R^N$ and that for some $m\in \N\cup\{\infty\}$ the following holds: We have $j\in W^{l,1}(\R^N\setminus B_{\epsilon}(0)$ for every $l\in \N$ with $l\leq 2m$, and there is some constant $C_j>0$ such that
$$
|\nabla j(z)|\leq C_j|z|^{-1-\sigma-N}\quad \text{for all $0<|z|\leq 3$.}
$$ 
For simplicity, we write $j$ in place of $k$ and fix 
$$
\alpha:=1-\sigma\in(\frac12,1).
$$

\begin{thm}
	\label{thm:differentiability-step1}
	Let $f\in H^1(\Omega)$, $\lambda\in \R$ and $u\in \cV^j_{loc}(\Omega)\cap L^{\infty}(\R^N)$ satisfy in weak sense $I_ju=f+\lambda u$ in $\Omega$. Then for any $\Omega'\subset\subset \Omega$ there is $C=C(N,\Omega,\Omega',j,\lambda)>0$ such that
	\begin{equation}
		\label{bound-derivative}
		\| \delta_{h,e} u\|_{L^2(\Omega')}\leq h^{\alpha}C\Big(\|f\|_{H^1(\Omega)}^2+\|u\|_{L^\infty(\R^N)}^2\Big)^{\frac12}\quad\text{for all $h>0$, $e\in \partial B_1(0)$.}
	\end{equation}
\end{thm}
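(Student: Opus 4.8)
The plan is to differentiate the equation by passing to difference quotients and to run a localized energy estimate for $\delta_{h,e}u$, using in an essential way that $I_j$ commutes with translations (since $k(x,y)=j(x-y)$) and that $j$ is smooth away from its singularity. For $h$ bounded below the estimate is trivial, because $\|\delta_{h,e}u\|_{L^2(\Omega')}\le 2\|u\|_{L^\infty(\R^N)}|\Omega'|^{1/2}$; so I fix $h>0$ small. Writing $\Omega_h:=\{x\in\Omega:\dist(x,\partial\Omega)>h\}$ and $w:=\delta_{h,e}u=u(\cdot+he)-u$, translation invariance of $b_j$ together with the weak formulation and its extension (Remark~\ref{rem-weak-defi}) show that $w$ satisfies $I_jw=\delta_{h,e}f+\lambda w$ weakly in $\Omega_h$, where $\delta_{h,e}f\in L^2_{loc}(\Omega_h)$ with $\|\delta_{h,e}f\|_{L^2(U)}\le h\|\nabla f\|_{L^2(\Omega)}$ for $U\subset\subset\Omega$ and $h$ small, since $f\in H^1(\Omega)$. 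Following Section~\ref{sec:bdd} I split $b_j=b_{J_\delta}+b_{k_\delta}$ with $J_\delta(x,y)=1_{B_\delta(0)}(x-y)j(x-y)$ and $k_\delta=j-J_\delta$, and fix $\delta>0$ small enough that $c_\delta:=\int_{\R^N\setminus B_\delta(0)}j(z)\,dz\ge\lambda^+$ (possible by~\eqref{assumption2}), that a $2\delta$-neighbourhood of the domains used below lies in $\Omega$, and that $\int_{B_\delta(0)}|z|^2j(z)\,dz$ is as small as subsequently needed.

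\emph{Localized testing.} For nested sets $\Omega'\subset\subset\Omega_t\subset\subset\Omega_s\subset\subset\Omega$ depending on a parameter ($t<s$) and a cutoff $\eta\in C^{0,1}_c(\Omega_s)$ with $\eta\equiv1$ on $\Omega_t$, $0\le\eta\le1$, $\|\nabla\eta\|_\infty\lesssim(s-t)^{-1}$, I test the equation for $w$ with $\eta^2w\in\cD^j(\Omega_s)$ (admissible by Lemma~\ref{properties1}(5) and density). The algebraic identity $b_{J_\delta}(w,\eta^2w)=b_{J_\delta}(\eta w)-\tfrac12\iint w(x)w(y)(\eta(x)-\eta(y))^2J_\delta(x,y)\,dx\,dy$, together with the Poincaré-type coercivity $b_{J_\delta}(\eta w)\ge C_0\|\eta w\|_{L^2}^2$ (valid for functions in $\cD^{J_\delta}$ supported in a fixed bounded set, as used in Section~\ref{sec:bdd}), isolates the main term. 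The contribution $b_{k_\delta}(w,\eta^2w)=c_\delta\!\int\!\eta^2w^2\,dx-\int\!\eta^2w\,(w\ast j_\delta)\,dx$, with $j_\delta:=j\,1_{\R^N\setminus B_\delta(0)}$, absorbs the $\lambda$-term since $c_\delta\ge\lambda^+$, and the $\delta_{h,e}f$-term on the right is controlled by Cauchy--Schwarz and $\|\delta_{h,e}f\|_{L^2}\le h\|\nabla f\|_{L^2(\Omega)}$.

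\emph{The two error terms -- the main obstacle.} What remains are the nonlocal tail $\int\eta^2w\,(w\ast j_\delta)\,dx$ and the local commutator $E:=\tfrac12\iint w(x)w(y)(\eta(x)-\eta(y))^2J_\delta(x,y)\,dx\,dy$, and the whole difficulty is to show both carry a factor $h$; this is exactly where the smoothness of $j$ off the origin is used. For the tail, $w\ast j_\delta=(\delta_{h,e}u)\ast j_\delta=u\ast(\delta_{h,e}j_\delta)$, and the kernel hypotheses ($|\nabla j(z)|\le C_j|z|^{-1-\sigma-N}$ for $|z|\le3$, $j\in W^{1,1}(\R^N\setminus B_\varepsilon(0))$, and a one-dimensional trace estimate across the sphere $\{|z|=\delta\}$) give $\|\delta_{h,e}j_\delta\|_{L^1(\R^N)}\le C_\delta h$, hence $\|w\ast j_\delta\|_{L^\infty(\R^N)}\le C_\delta h\|u\|_{L^\infty(\R^N)}$, after which Cauchy--Schwarz and Young close this term. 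For the commutator, put $K(x,y):=(\eta(x)-\eta(y))^2J_\delta(x,y)$; the factor $(\eta(x)-\eta(y))^2\lesssim\|\nabla\eta\|_\infty^2|x-y|^2$ tames the singularity, so that $\nabla_yK(x,\cdot)\in L^1(\R^N)$ with norm $\lesssim\|\nabla\eta\|_\infty^2\int_{B_\delta(0)}\bigl(|z|+|z|^2|\nabla j(z)|\bigr)\,dz<\infty$ (plus a thin-shell term at $|z|=\delta$). Moving the difference onto $K$ by $\int w(y)K(x,y)\,dy=\int u(y)\bigl(K(x,y-he)-K(x,y)\bigr)\,dy$ yields $|E|\le C'\|u\|_{L^\infty(\R^N)}h\,\|w\|_{L^2(S)}$ on the annular set $S:=B_\delta(\supp\nabla\eta)\subset\Omega_s\setminus\Omega'$, and Young's inequality gives $|E|\le\varepsilon\|w\|_{L^2(S)}^2+C_\varepsilon\|u\|_{L^\infty(\R^N)}^2h^2$ with $\varepsilon$ at our disposal.

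\emph{Hole-filling and iteration.} Choosing $\varepsilon$ small relative to $C_0$ and using $\|w\|_{L^2(S)}^2\le\|w\|_{L^2(\Omega_s)}^2-\|w\|_{L^2(\Omega_t)}^2$ to fill the hole, the estimates combine, with $\psi(r):=\|\delta_{h,e}u\|_{L^2(\Omega_r)}^2$, into
\begin{equation*}
\psi(t)\le\frac12\,\psi(s)+\frac{C}{(s-t)^2}\,h^2\Bigl(\|\nabla f\|_{L^2(\Omega)}^2+\|u\|_{L^\infty(\R^N)}^2\Bigr),\qquad\rho_0\le t<s\le R_0,
\end{equation*}
with $C=C(N,\Omega,\Omega',j,\lambda)$. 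A standard iteration lemma then gives $\psi(\rho_0)\lesssim h^2\bigl(\|\nabla f\|_{L^2(\Omega)}^2+\|u\|_{L^\infty(\R^N)}^2\bigr)$, i.e.\ the asserted bound with $h^\alpha$ even replaced by the stronger $h$ (which suffices since $\alpha<1$ and $h$ may be taken $\le1$). I expect the delicate point to be precisely the production of the factor $h$ in the tail and in the commutator -- that is, trading a difference quotient of $u$ for one of $j$ and making the thin-shell contribution at $|z|=\delta$ quantitative -- while everything else is a localized version of the coercivity and density machinery of Sections~\ref{preliminaries}--\ref{sec:bdd}.
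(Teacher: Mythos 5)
Your proof is essentially correct and takes a genuinely different route from the paper, and in fact it proves a slightly stronger estimate. The paper works on balls $B_{nr}(x_0)$ with $r$ chosen so small that $\lambda<\Lambda_1(B_{4r})$ (via Lemma~\ref{poincare}), and then absorbs the zero-order term directly into the full quadratic form $b_j(\eta\delta_h u)$; you instead import the $\delta$-split $j=J_\delta+k_\delta$ from Section~\ref{sec:bdd}, absorbing $\lambda$ into the large zeroth-order contribution $c_\delta\|\eta w\|_{L^2}^2$ produced by $b_{k_\delta}$. The more substantive difference is in the commutator. In the paper's \eqref{bounding-1} the authors symmetrize $\tfrac12\iint\delta_h u(x)\delta_h u(y)(\eta(x)-\eta(y))^2 j$ into $\iint v(x)\,\delta_h u(y)\,(\eta(x)-\eta(y))j(x-y)$ (one factor of $\eta$ pulled out), then move the $y$-difference onto $(\eta(x)-\eta(y))j(x-y)\lesssim|z|j(z)$; the gradient of that kernel behaves like $j(z)+|z||\nabla j(z)|\sim|z|^{-\sigma-N}$, which is not integrable, forcing the $\min$-construction of $k_h$ and the final rate $h^{\alpha}=h^{1-\sigma}$. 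You keep the square $(\eta(x)-\eta(y))^2\lesssim|z|^2 j(z)$, whose gradient $\lesssim |z|j(z)+|z|^2|\nabla j(z)|$ is in $L^1(B_\delta)$ under the paper's hypotheses (using $\int_{B_\delta}|z|^\sigma j<\infty$ and $|\nabla j|\lesssim|z|^{-1-\sigma-N}$ with $\sigma<1$), which yields a clean $O(h)$ commutator. The price is that your commutator bound involves $\|w\|_{L^2(S)}$ on an annulus outside $\{\eta=1\}$ rather than $\|\eta w\|_{L^2}$, so unlike the paper you cannot absorb it directly and need the hole-filling iteration (which is indeed necessary; the crude bound $\|w\|_\infty\le 2\|u\|_\infty$ would only yield $h^{1/2}$). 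In return you obtain $\|\delta_{h,e}u\|_{L^2(\Omega')}\lesssim h$ rather than $h^{1-\sigma}$, which is stronger for all $\sigma\in(0,\tfrac12)$ and suffices for the downstream Nikol'skii embedding.

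A few details that would need firming up if you were to write this out. (i) The "thin-shell" terms created by the sharp cutoff $1_{B_\delta(0)}$ appear both in $\delta_{h,e}j_\delta$ and in $\nabla_y K$; they are $O(h)$ only after invoking the $L^1$ trace of $j$ on $\partial B_\delta$, which is available from $j\in W^{1,1}(\R^N\setminus B_\epsilon(0))$ but should be stated explicitly. (Alternatively one can replace $1_{B_\delta}$ by a smooth radial cutoff, which removes the trace issue entirely.) (ii) For the hole-filling inequality $\|w\|_{L^2(S)}^2\le\|w\|_{L^2(\Omega_s)}^2-\|w\|_{L^2(\Omega_t)}^2$ you need $S\subset\Omega_s\setminus\Omega_t$; with $S=B_{\delta+h}(\supp\nabla\eta)$ this requires $\eta\equiv1$ on a $(\delta+h)$-neighbourhood of $\Omega_t$ and $\supp\eta$ at distance $\ge\delta+h$ from $\partial\Omega_s$, i.e. a shifted parametrization of the nested sets; also the constant in front of $h^2$ scales like $(s-t)^{-4}$ rather than $(s-t)^{-2}$ (two powers of $\|\nabla\eta\|_\infty^2$ enter the commutator after Young), though that does not affect the iteration lemma. (iii) The Poincar\'e constant $C_0$ for $b_{J_\delta}$ must be taken uniformly over the nested family, so it should be fixed as $\Lambda_1^{J_\delta}$ of the largest set used; positivity of this constant follows because, as in the paper, $\int_{B_\delta}j=\infty$ together with \eqref{assumptions} forces $j\not\equiv0$ on every ball around the origin. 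None of these is a genuine obstruction; once filled in, your argument is sound.
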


\begin{proof}
	Let $\Omega'\subset\subset\Omega$ and fix $r\in(0,\frac18)$ small such that $8r\leq\dist(\Omega',\R^N\setminus\Omega)$. Moreover, fix $x_0\in \Omega'$ and denote $B_n:=B_{nr}(x_0)$. Note that by using assumption \eqref{assumption2} with Lemma \ref{poincare} we achieve, by making $r>0$ small enough,
	$$
	\lambda<\lambda_1=\min_{\substack{w\in \cD^j(B_4)\\ w\neq 0}}\frac{b_j(w)}{\|w\|_{L^2(\R^N)^2}}.
	$$
	Let $\eta\in C^{0,1}_c(B_4)$ with $0\leq \eta\leq 1$, $\eta\equiv 1$ on $B_2$. Note that it holds
	$$
	|\eta(x)-\eta(y)|\leq 2\|\eta\|_{C^{0,1}(\R^N)}\min\{1,|x-y|\},
	$$
	where we put as usual
	$$
	\|\eta\|_{C^{0,1}(\R^N)}:=\sup_{x\in \R^N}|\eta(x)|+\sup_{\substack{x,y\in \R^N\\ x\neq y}}\frac{|\eta(x)-\eta(y)|}{|x-y|}.
	$$
	Note that by choice we have $\|\eta\|_{C^{0,1}(\R^N)}\leq 1+\frac{1}{r}\leq \frac{2}{r}$, so that for all $x,y\in \R^N$
	\begin{equation}
		\label{bound on eta}
		|\eta(x)-\eta(y)|\leq \frac{4}{r}\min\{1,|x-y|\}.
	\end{equation}
	Fix $e\in \partial B_1(0)$ and $h\in(0,r)$. Let 
	$$
	A:=\|u\|_{L^{\infty}(\R^N)}.
	$$
	Let $\psi=\eta^2\delta_{h}u\in \cD^j(B_4)$, where in the following $\delta_hu:=\delta_{h,e}u$. Note that
	\begin{align*}
		(\delta_{h}u(x)-\delta_{h}u(y))(\psi(x)-\psi(y))&=(\eta(x)\delta_{h}u(x)-\eta(y)\delta_{h}u(y))^2\\
		&\qquad -\delta_{h}u(x)\partial_{h}u(y)(\eta(x)-\eta(y))^2.
	\end{align*}
	Hence, we have
	$$
	b_j(\delta_{h}u,\psi)=b_j(\eta\delta_{h}u)-\frac{1}{2}\iint_{\R^{N}\times\R^N}\delta_{h}u(x)\delta_{h}u(y)(\eta(x)-\eta(y))^2j(x-y)\ dxdy.
	$$
	and using the translation invariance, we also have
	$$
	b_j(\delta_h u,\psi)=\int_{\Omega} [\delta_hf(x)+\lambda\delta_hu]\psi(x)\ dx.
	$$
	In the following, for simplicity, we put $v(x)=\eta(x)\delta_hu(x)$, $x\in \R^N$. Note that by Definition, $v\in \cD^j(B_4)$. Then with the help of Young's inequality for some $\mu\in(0,1)$ such that
	\begin{equation}
		\label{condition on mu}
		2\mu< \lambda_1-\lambda
	\end{equation}
	we find
	\begin{align}
		&\lambda_1\|v\|_{L^2(\Omega'')}^2\leq b_j(v)=b_j(\delta_{h}u,\psi)+\frac{1}{2}\iint_{\R^{n}\times\R^N}\delta_{h}u(x)\delta_{h}u(y)(\eta(x)-\eta(y))^2j(x-y)\ dxdy\notag\\
		&=\int_{\Omega''}[\delta_{h}f(x)+\lambda\delta_hu]\eta^2(x)\delta_{h}u(x)\ dx+\frac{1}{2}\iint_{\R^{n}\times\R^N}\delta_{h}u(x)\delta_{h}u(y)(\eta(x)-\eta(y))^2j(x-y)\ dxdy\notag\\
		&\leq  (\mu+\lambda)\|v\|_{L^{2}(\Omega'')}^2+\mu^{-1}h^2\|\frac{\delta_hf}{h}\|_{L^2(\Omega'')}^2  +\frac{1}{2}\iint_{\R^N\times\R^N}\delta_{h}u(x)\delta_{h}u(y)(\eta(x)-\eta(y))^2j(x-y)\ dxdy.\label{basis-bound}
	\end{align}
	By a rearrangement of the double integral with Young's inequality for the same $\mu\in(0,1)$ as above 
	we have
	\begin{align}
		\frac{1}{2}&\iint_{\R^{n}\times\R^N}\delta_{h}u(x)\delta_{h}u(y)(\eta(x)-\eta(y))^2j(x-y)\ dxdy\notag\\
		&=\iint_{\R^N\times \R^N} \delta_{h}u(x)\delta_{h}u(y)\eta(x)(\eta(x)-\eta(y))j(x-y)\ dxdy\notag\\
		&=\int_{\R^N} \eta(x)\delta_{h}u(x)\int_{\R^N}u(y)\delta_{-h,y}\Big((\eta(x)-\eta(y))j(x-y)\Big)\ dydx\notag\\
		&\leq \mu\| v\|_{L^2(B_4)}^2+\mu^{-1}\int_{B_4}\Bigg(\ \int_{\R^N}|u(y)|\Big|\delta_{-h,y}\Big((\eta(x)-\eta(y))j(x-y)\Big)\Big|\ dy\Bigg)^2 dx\notag\\
		&\leq \mu\| v\|_{L^2(B_4)}^2+\mu^{-1}A^2\int_{B_4}\Bigg(\ \int_{\R^N}\Big|\delta_{-h,y}\Big((\eta(x)-\eta(y))j(y-x)\Big)\Big|\ dy\Bigg)^2 dx\notag\\
		&\leq 
		\mu\| v\|_{L^2(B_4)}^2+\mu^{-1}A^2\int_{B_4}\Bigg(\ \int_{\R^N}\Big|\delta_{-h,z}\Big((\eta(x)-\eta(z+x))j(z)\Big)\Big|\ dz\Bigg)^2 dx.
		\label{bounding-1}
	\end{align}
	Here, we indicate with $\delta_{-h,y}$ (resp. $\delta_{-h,z}$) that $\delta_{-h}$ acts on the $y$ (resp. $z$) variable. Note that
	\begin{align}
		\delta_{-h,z}&\Big((\eta(x)-\eta(z+x))j(z)\Big)\notag\\
		&=\delta_{-h,z}(\eta(x)-\eta(z+x))j(z)+ (\eta(x)-\eta(z+x-he))\delta_{-h}j(z)\notag\\
		&=\Big(\eta(z+x)-\eta(z+x-he)\Big) j(z)+(\eta(x)-\eta(z+x-he))\Big(j(z-he)-j(z)\Big)\label{difference-op1}\\
		&= \Big(\eta(z+x)-\eta(z)\Big) j(z)+ \Big(\eta(x)-\eta(z+x-he)\Big)j(z-he).\label{difference-op2}
	\end{align}
	
	Note here, that \eqref{difference-op1} satisfies
	\begin{equation}
		\label{difference-op1b}
		\begin{split}
			\Bigg|\Big(\eta(z+x)-\eta(z+x-he)\Big)& j(z)+(\eta(x)-\eta(z+x-he))\Big(j(z-he)-j(z)\Big)\Bigg|\\
			&\leq \frac{4h}{r}J(z)+\frac{4h}{r}\min\{1,|z-he|\}\int_0^1|\nabla j(z-\tau he)|\ d\tau
		\end{split}
	\end{equation}
	and \eqref{difference-op2} can be written as
	\begin{equation}
		\label{difference-op2b}
		\begin{split}
			\Bigg| \Big(\eta(z+x)-\eta(z)\Big) &j(z)+ \Big(\eta(x)-\eta(z+x-he)\Big)j(z-he)\Bigg|\\
			&\leq \frac{4}{r}\min\{1,|z|\}j(z)+\frac{4}{r}\min\{1,|z-he|\}j(z-he).
		\end{split}
	\end{equation}
	For $h\in(0,r)$, $z\in \R^N\setminus\{0\}$ put
	\begin{align*}
		k_h(z)=\min&\Bigg\{h\Big(j(z)+\min\{1,|z-he|\}\int_0^1|\nabla j(z-\tau he)|\ d\tau\Big),\\
		&\qquad\qquad \min\{1,|z|\}j(z)+\min\{1,|z-he|\}j(z-he)\Bigg\}.
	\end{align*}
	Then, by combining \eqref{basis-bound} and \eqref{bounding-1}, we find
	\begin{equation}
		\label{final-bound1}
		\begin{split}
			\|\delta_h u\|_{L^2(B_2)}^2&\leq \|v\|_{L^2(B_4)}^2\\
			&\leq \frac{\mu^{-1}|B_4|}{\lambda_1-\lambda-2\mu}\Bigg(h^2\|\frac{\delta_hf}{h}\|_{L^{2}(B_4)}^2+ \frac{16}{r^2}\|u\|_{L^\infty(\R^N)}^2 \Big(\ \int_{\R^N}k_h(z)\ dz\Big)^2\Bigg).
		\end{split}
	\end{equation}
	Next we show that we have $\int_{\R^N}k_h(z)\ dz \leq C h^{\alpha}$ for some $C>0$. Clearly, we can bound
	\begin{equation}
		\label{k-h-bound outside}
		\int_{\R^N\setminus B_2(0)}k_h(z)\ dz\leq C_1h
	\end{equation}
	for some $C_1=C_1(N,j)>0$, using that $B_1(0)\cup B_1(he)\subset B_2(0)$ and the properties of $j$. In the following, by making $C_j$ larger if necessary, we may also assume that assumption \eqref{assumptions} reads
	$$
	\sup_{x\in \R^N}\int_{\R^N}\min\{1,|x-y|^{\sigma}\}j(x-y)\ dy=\int_{\R^N}\min\{1,|z|^{\sigma}\}j(z)\ dz\leq C_j.
	$$
	 Then note that $B_{2h}(he)\subset B_{3h}(0)$ and we have
	\begin{align}
		\int_{B_{2h}(0)} &\min\{1,|z|\}j(z)+\min\{1,|z-he|\}j(z-he)\ dz\notag\\
		&\leq C_j\int_{B_{3h}(0)} |z|^{1-\sigma-n}\ dz + C_j\int_{B_{3h}(he)}|z-he|^{1-\sigma-n}\ dz\notag\\
		&=\frac{2|B_{1}(0)|C_j}{n} \int_0^{3h}\rho^{-\sigma}\ d\rho=\frac{2|B_{1}(0)|C_j}{n(1-\sigma)}(3h)^{1-\sigma}.\label{k-h-bound inside1}
	\end{align}
	While with $b_{\sigma}(t)=\frac{1}{\sigma}t^{-\sigma}$ we have
	\begin{align}
		h\int_{B_2(0)\setminus B_{2h}(0)}&j(z)+\min\{1,|z-he|\}\int_0^1|\nabla j(z-\tau he)|\ d\tau\ dz\notag\\
		&\leq \frac{h|B_{1}(0)|C_j}{n}\int_{2h}^2\rho^{-\sigma-1}\ d\rho+hC_j\int_0^1 \int_{B_{3}(\tau he)\setminus B_{h}(\tau he)}|z||z-\tau he|^{-1-\sigma-n}\ dz\ d\tau\notag\\
		&\leq \frac{h|B_{1}(0)|C_j}{n}b_{\sigma}(2h)+hC_j\int_0^1 \int_{B_{3}(0)\setminus B_{h}(0)}|z+\tau h e||z|^{-1-\sigma-n}\ dz\ d\tau\notag\\
		&\leq \frac{h|B_{1}(0)|C_j}{n} b_{\sigma}(2h)+hC_J \int_{B_{3}(0)\setminus B_{h}(0)}|z|^{-\sigma-n}\ dz +h^2C_j \int_{B_{3}(0)\setminus B_{h}(0)}|z|^{-1-\sigma-n}\ dz\notag\\
		&\leq \frac{2h|B_{1}(0)|C_j}{n} b_{\sigma}(h)+\frac{h^2|B_{1}(0)|C_j}{n} \int_h^3\rho^{-2-\sigma}\ d\rho\notag\\
		&\leq  \frac{2|B_{1}(0)|C_j}{n} hb_{\sigma}(h)+\frac{|B_{1}(0)|C_j}{n(1+\sigma)}h^{1-\sigma}.\label{k-h-bound inside2}
	\end{align}
	Combining \eqref{k-h-bound outside} with \eqref{k-h-bound inside1} and \eqref{k-h-bound inside2} and the choice $\alpha=1-\sigma\in(0,1)$ we find $C_2=C_2(N, j,\alpha)>0$ such that
	\begin{equation}
		\label{l1-bound}
		\int_{\R^N}k_h(z)\ dz\leq C_2h^{\alpha}.
	\end{equation}
	Hence, from \eqref{final-bound1} with \eqref{l1-bound} we have
	\begin{equation}
		\label{thm-differentiability-step1-single}
		\|\delta_h u\|_{L^2(B_2)}^2\leq \|v\|_{L^2(B_4)}^2\leq h^{2\alpha}C_4\Big(\|\frac{\delta_hf}{h}\|_{L^2(B_4)}^2+ \|u\|_{L^\infty(\R^N)}^2\Big),
	\end{equation}
	for a constant $C_4=C_4(N,j,r,\alpha,\lambda)>0$. By a standard covering argument, we then also find with a constant $C_5=C_5(N,j,\Omega,\Omega',\alpha,\lambda)>0$ and $\Omega''=\{x\in \Omega\;:\; \dist(x,\R^N\setminus\Omega)>4r\}$
	\begin{equation}
		\label{thm-differentiability-step1}
		\|\delta_h u\|_{L^2(\Omega')}^2\leq h^{2\alpha}C_4\Big(\|\frac{\delta_hf}{h}\|_{L^2(\Omega'')}^2+ \|u\|_{L^\infty(\R^N)}^2\Big),
	\end{equation}
The claim \eqref{bound-derivative} then follows since $f\in H^1(\Omega)$.
\end{proof}

\begin{rem}
	\label{thm-differentiability-0}
	If additionally $f\in L^{\infty}(\Omega)$, combining Theorem \ref{thm:differentiability-step1} with Corollary \ref{thm-bdd2} it follows that we have in the situation of Theorem \ref{thm:differentiability-step1} for every $\Omega'\subset\subset\Omega$
	\begin{equation}
		\label{bound-derivative2}
		\| \delta_{h,e} u\|_{L^2(\Omega')}\leq h^{\alpha}C\Big(\|f\|_{C^1(\Omega)}^2+\|u\|_{L^2(\Omega')}^2+\|u\|_{L^\infty(\R^N\setminus \Omega')}^2\Big)^{\frac12}\quad\text{for all $h>0$, $e\in \partial B_1(0)$.}
	\end{equation}
\end{rem}

\begin{cor}
	\label{thm:differentiability}
	Assume $m=1$. Let $f\in C^2(\overline{\Omega})$, $\lambda\in \R$, and let $u\in \cV^j_{loc}(\Omega)\cap L^{\infty}(\R^N)$ satisfy in weak sense $I_ju=\lambda u+ f$ in $\Omega$. Then $u\in H^{1}(\Omega')$ and $\partial_iu\in D^j(\Omega')$ for any $\Omega'\subset \subset \Omega$. More precisely, with $\alpha$ as above there is for any $\Omega'\subset\subset \Omega$ a constant $C=C(N,\Omega,\Omega', j,\lambda)>0$ such that
	\begin{equation}
		\label{bound-derivative-final1}
		\sup_{\substack{ e\in \partial B_1(0)\\ h>0}}h^{-2\alpha}\| \delta_{h,e}^2 u\|_{L^2(\Omega')}\leq C\Big(\|f\|_{C^2(\Omega)}^2+\|u\|_{L^2(\Omega')}+\|u\|^2_{L^\infty(\R^N\setminus \Omega')} \Big)^{\frac12},
	\end{equation}
	so that $u\in N^{2\alpha,2}(\Omega')\subset H^1(\Omega')$, that is, there is also $C'=C'(N, j,\Omega,\Omega',\alpha,\lambda)>0$ such that
	\begin{equation}
		\label{bound-derivative-final2}
		\| \nabla u\|_{L^2(\Omega')}\leq C'\Big(\|f\|_{C^2(\Omega)}^2+\|u\|_{L^2(\Omega')}+\|u\|^2_{L^\infty(\R^N\setminus \Omega')} \Big)^{\frac12}
	\end{equation}
	and, moreover, 
	$$
	b_{j,\Omega'}(\partial_iu)\leq C'\quad\text{for $i=1,\ldots,N$.}
	$$
\end{cor}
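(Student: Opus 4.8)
The substance of the statement is the second-difference bound \eqref{bound-derivative-final1}; once it is established, the rest is soft. Indeed, $u\in L^2(\Omega')$ together with $\sup_{e,h}h^{-2\alpha}\|\delta_{h,e}^2u\|_{L^2(\Omega'_{2h})}<\infty$ forces $u\in N^{2\alpha,2}(\Omega')$ by the characterisation of the Nikol'skii space $N^{s,2}$ with $s\in(0,2)$ through second-order differences (\cite[Theorem 4.4.2.1]{T95}; here $2\alpha\in(1,2)$ since $\alpha\in(\tfrac12,1)$), and since $2\alpha>1$ Proposition \ref{prop-sobolev-nikolskii} with $t=2\alpha$, $s=1$ gives $N^{2\alpha,2}(\Omega')\subset H^1(\Omega')$, whence $u\in H^1(\Omega')$ and \eqref{bound-derivative-final2} come out of the norm inequalities; to apply the proposition one first enlarges $\Omega'$ to a set with $C^\infty$ boundary still compactly contained in $\Omega$, which changes nothing. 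The property $\partial_iu\in D^j(\Omega')$ with $b_{j,\Omega'}(\partial_iu)\le C'$ I would treat at the end.

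For \eqref{bound-derivative-final1}, fix $\Omega'\subset\subset\Omega''\subset\subset\Omega$, $e\in\partial B_1(0)$ and small $h>0$, and put $w:=\delta_{h,e}u$. As $k(x,y)=j(x-y)$ is translation invariant, $w\in\cV^j_{loc}(\Omega'')\cap L^\infty(\R^N)$ and $w$ solves weakly $I_jw=\delta_{h,e}f+\lambda w$ in $\Omega''$, while $f\in C^2(\overline\Omega)$ yields $\|\delta_{h,e}^2f\|_{L^2(\Omega'')}\le Ch^2\|f\|_{C^2(\overline\Omega)}$. Running the proof of Theorem \ref{thm:differentiability-step1} with $w$ in place of $u$, $\delta_{h,e}f$ in place of $f$ and step-size $h$ --- testing with $\eta^2\delta_{h,e}w=\eta^2\delta_{h,e}^2u$ --- one is led, exactly as at \eqref{basis-bound}--\eqref{bounding-1}, to
\[
\|\delta_h^2u\|_{L^2(B_2)}^2\ \le\ C\,\|\delta_h^2f\|_{L^2(B_4)}^2+C\!\int_{B_4}\!\Big(\int_{\R^N}|\delta_hu(y)|\,\big|\delta_{-h,y}\big((\eta(x)-\eta(y))j(x-y)\big)\big|\,dy\Big)^{\!2}dx ,
\]
where one of the two differences has been moved off the inner factor $\delta_h^2u(y)=\delta_h(\delta_hu)(y)$, and $B_2,B_4,\eta,\lambda_1,\mu$ are as in that proof (with $r$ small enough that $\lambda<\lambda_1$ on $B_4$).

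The crux is to bound the double integral by $Ch^{4\alpha}$, and here --- in contrast with the obvious estimate, which only uses $|\delta_hu|\le 2\|u\|_{L^\infty}$ and produces $h^{2\alpha}$ --- one has to be cleverer. I would split the inner $y$-integral into $y\in B_6$ (a concentric ball with $B_4\subset B_6\subset\subset\Omega''$) and $y\in\R^N\setminus B_6$. On $B_6$: Cauchy--Schwarz in $y$ together with the kernel bound $\int_{\R^N}|\delta_{-h,y}((\eta(x)-\eta(y))j(x-y))|\,dy\le Ch^\alpha$ --- this is the estimate $\int k_h\le C_2h^\alpha$ of the proof of Theorem \ref{thm:differentiability-step1} --- gives a contribution $\le Ch^{2\alpha}\|\delta_hu\|_{L^2(B_6)}^2$, and Theorem \ref{thm:differentiability-step1} applied directly to $u$ bounds $\|\delta_hu\|_{L^2(B_6)}^2$ by $Ch^{2\alpha}(\|f\|_{H^1(\Omega)}^2+\|u\|_{L^\infty(\R^N)}^2)$, so this part is $O(h^{4\alpha})$. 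On $\R^N\setminus B_6$, where $\eta$ vanishes at $y$ and at $y-he$, the inner expression equals $\eta(x)\int_{\R^N\setminus B_6}\delta_hu(y)\,(\delta_hj)(x-y)\,dy$; since $\delta_hu$ is an increment, I would transfer it onto $j$, rewriting this --- up to a shell of width $O(h)$ along $\partial B_6$, where $|x-y|\ge r$ --- as $\eta(x)\int u(y)\,(\delta_h^2j)(x-y)\,dy$, and then use that $|x-y|$ is bounded below and $j\in W^{2,1}(\R^N\setminus B_\epsilon)$ (here the hypothesis $m=1$ enters) to get $\|\delta_h^2j\|_{L^1(\{|z|\ge r\})}\le Ch^2$, so this part is $\le Ch^4\|u\|_{L^\infty(\R^N)}^2$; the shell term is $O(h^2\|u\|_{L^\infty(\R^N)})$ by the gradient bound on $j$. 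Collecting everything, using $\|f\|_{H^1(\Omega)}\le C\|f\|_{C^2(\overline\Omega)}$ and $h^4\le h^{4\alpha}$ for $h\le1$ (as $\alpha<1$), gives $\|\delta_h^2u\|_{L^2(B_2)}^2\le Ch^{4\alpha}(\|f\|_{C^2(\overline\Omega)}^2+\|u\|_{L^\infty(\R^N)}^2)$; replacing $\|u\|_{L^\infty(\R^N)}$ by $\|f\|_{L^\infty(\Omega)}+\|u\|_{L^2(\Omega')}+\|u\|_{L^\infty(\R^N\setminus\Omega')}$ through Corollary \ref{thm-bdd2} as in Remark \ref{thm-differentiability-0}, and covering $\Omega'$ by finitely many such balls (the range $h\gtrsim1$ being trivial), yields \eqref{bound-derivative-final1}.

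I expect the main obstacle to be precisely this ``far'' estimate: squeezing an extra factor $h$ out of $\int_{\R^N\setminus B_6}\delta_hu(y)(\delta_hj)(x-y)\,dy$ --- which is what upgrades the exponent of $\|\delta_h^2u\|_{L^2}$ from $\alpha$ (too weak for $H^1$, giving only $u\in N^{\alpha,2}_{loc}$) to $2\alpha>1$ --- requires using at once that $\delta_hu$ is an increment and that $j$ has two integrable derivatives off the origin, and the shell created by shifting the integration domain has to be controlled (most cleanly by using a smooth cut-off between the two regions instead of a sharp ball). Finally, for $\partial_iu\in D^j(\Omega')$ with $b_{j,\Omega'}(\partial_iu)\le C'$: once $u\in H^1_{loc}(\Omega)$ is in hand, $h^{-1}\delta_{h,e_i}u\to\partial_iu$ in $L^2(\Omega')$, so by lower semicontinuity of $v\mapsto b_{j,\Omega'}(v)$ it is enough to prove $b_{j,\Omega'}(\delta_{h,e_i}u)\le Ch^2(\dots)$; this in turn follows from testing the equation for $w=\delta_{h,e_i}u$ with $\eta^2w$ and estimating the resulting terms --- now using $\|\delta_hu\|_{L^2(K)}=O(h)$ on compacts $K\subset\subset\Omega$ (the $H^1$-regularity just obtained), $\int_{|z|<1}|z|\,j(z)\,dz<\infty$ near the origin, and again the transfer-of-increment trick with $j\in W^{2,1}(\R^N\setminus B_\epsilon)$ far away.
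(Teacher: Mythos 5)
Your proposal is essentially correct and follows the paper's route at every structural step: run Theorem \ref{thm:differentiability-step1} on $\delta_hu$, feed in a second application of it (to $u$ itself) to upgrade $h^{2\alpha}$ to $h^{4\alpha}$, gain an extra power of $h$ in the far field by transferring the increment onto $j$ using $j\in W^{2,1}(\R^N\setminus B_\epsilon(0))$, pass through Nikol'skii spaces via Proposition \ref{prop-sobolev-nikolskii}, and close the claim $\partial_iu\in D^j(\Omega')$ by difference-quotient testing combined with lower semicontinuity (the paper phrases this as weak compactness of bounded sets in the Hilbert space $D^j$, which is the same thing). The genuine variation is in the near-field kernel term. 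The paper first localises, sets $v=\eta\,\delta_hu$ via Lemma \ref{localization}, applies Theorem \ref{thm:differentiability-step1} to $v$, and then downgrades the resulting factor $\|v\|_{L^\infty(\R^N)}$ to an $h^\alpha$-small quantity through the boundedness result Corollary \ref{thm-bdd2}. You instead keep $|\delta_hu(y)|$ inside the $y$-integral and estimate the integral operator with kernel $K(x,y)=\big|\delta_{-h,y}\big((\eta(x)-\eta(y))j(x-y)\big)\big|$ on $L^2$ by Cauchy--Schwarz (in effect, Schur's test), which buys a direct factor $\|\delta_hu\|_{L^2(B_6)}\leq Ch^\alpha(\dots)$ and therefore bypasses Corollary \ref{thm-bdd2} in this step. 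This is a cleaner route, but be aware that Schur's test needs the kernel bound in \emph{both} directions: besides the paper's $\sup_x\int_{\R^N}K(x,y)\,dy\leq Ch^\alpha$ (the $\int k_h\leq C h^\alpha$ bound, which you cite), you also need $\sup_y\int_{B_4}K(x,y)\,dx\leq Ch^\alpha$, and this is a \emph{different} computation since $\delta_{-h,y}$ acts on $y$ only; with a single one-sided bound $\int_{B_4}K(x,y)\,dx\leq C$ you would only get $h^{3\alpha}$, which does not reach $H^1$ when $\alpha<\tfrac23$. The two-sided bound does hold --- split $|x-y|<3h$ from $|x-y|\geq3h$ and use $\int_{B_\rho(0)}|z|\,j(z)\,dz\leq C\rho^{1-\sigma}$ near the diagonal together with the gradient bound on $j$ off it --- but you do not state it, and it, rather than the far-field step you single out, is the subtle point of your decomposition.
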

\begin{proof}
	Let $\Omega_i\subset\subset \Omega$, $i=1,\ldots,7$ such that
	$$
	\Omega'\subset\subset \Omega_i\subset\subset \Omega_j \quad\text{for $1\leq i<k\leq 7$.}
	$$ 
	Let $\eta\in C^{\infty}_c(\Omega_{7})$ with $\eta=1$ on $\Omega_6$ and $0\leq \eta\leq 1$. Fix $e\in \partial B_1(0)$ and $h\in(0,\frac12r)$, where $r=\min\{\dist(\Omega_i,\Omega\setminus \Omega_{i+1})\;:\; i=1,\ldots,6\}$. Then by Lemma \ref{localization} the function $v=\eta\delta_h u$, where we write $\delta_h$ instead of $\delta_{h,e}$, satisfies $I_jv=\lambda v+\tilde{f}$ in $\Omega_5$, where $\tilde{f}=\delta_hf+g_{\eta,\delta_hu}$. Following the proof of Theorem \ref{thm:differentiability-step1} to \eqref{thm-differentiability-step1} it follows with Theorem \ref{thm-bdd} that there is $C=C(N, j, r,\alpha,\lambda)>0$ (changing from line to line) such that
	\begin{align*}
		\| \delta_{h}^2 u\|^2_{L^2(\Omega')}&= \|\delta_hv\|^2_{L^2(\Omega')}\leq h^{2\alpha}C\Big(\|\frac{\delta_{h}\tilde{f}}{h}\|_{L^{2}(\Omega_1)}^2+ \|v\|^2_{L^\infty(\R^N)}\Big)\\
		&\leq h^{2\alpha}C\Big(\|\frac{\delta_{h}\tilde{f}}{h}\|_{L^{2}(\Omega_1)}^2+ \|\tilde{f}\|_{L^{\infty}(\Omega_4)}^2+\|v\|_{L^{2}(\Omega_3)}^2+\|v\|_{L^{\infty}(\R^N\setminus \Omega_3)}^2\Big)\\
		&\leq h^{2\alpha}C\Big(\|\frac{\delta_{h}\tilde{f}}{h}\|_{L^{2}(\Omega_1)}^2+ \|\tilde{f}\|_{L^{\infty}(\Omega_4)}^2+\|\delta_hu\|_{L^{2}(\Omega_3)}^2\Big)\\
		&\leq h^{2\alpha}C\Big(\|\frac{\delta_{h}\tilde{f}}{h}\|_{L^{2}(\Omega_1)}^2+ \|\tilde{f}\|_{L^{\infty}(\Omega_4)}^2+h^{2\alpha}\Big(\|f\|_{C^1(\Omega)}^2+\|u\|_{L^{\infty}(\R^N)}^2\Big)\Big),
	\end{align*}
	where we applied once more Theorem \ref{thm:differentiability-step1}. Here, for $x\in \Omega_4$ using the assumptions on the differentiability of $j$ it follows that there is $C=C(j)>0$ such that
	\begin{align*}
		|\tilde{f}(x)|&\leq |\delta_hf(x)|+\Bigg|\int_{\R^N\setminus \Omega_6}(1-\eta(y))\delta_hu(y)j(x-y)\ dy\Bigg|\\
		&= |\delta_hf(x)|+\Bigg|\int_{\R^N\setminus \Omega_5}|u(y)|\delta_h[(1-\eta(y))j(x-y)]\ dy\\
		&\leq hC\Big(\|\nabla f\|_{L^{\infty}(\Omega)} +\|u\|_{L^{\infty}(\R^N\setminus \Omega')}\Big).
	\end{align*}
	Moreover, for $x\in \Omega_1$ in a similar way, there is $C=C(j)>0$ such that
	\begin{align*}
		|\delta_h\tilde{f}(x)|&\leq |\delta_h^2f(x)|+\Bigg|\int_{\R^N\setminus \Omega_6}(1-\eta(y))\delta_hu(y)\delta_hj(x-y)\ dy\Bigg|\\
		&\leq h^2\|f\|_{C^2(\Omega)}+ \|u\|_{L^{\infty}(\R^N\setminus \Omega')}\Bigg|\int_{\R^N\setminus \Omega_5}\delta_h[(1-\eta(y))\delta_hj(x-y)]\ dy\Bigg|\\
		&\leq h^2C\Bigg(\|f\|_{C^2(\Omega)}+ \|u\|_{L^{\infty}(\R^N\setminus \Omega')}\Bigg).
	\end{align*}
	Thus we have
	$$
	\| \delta_{h}^2 u\|^2_{L^2(\Omega')}\leq Ch^{4\alpha}\Bigg(\|f\|_{C^{2}(\Omega)}^2+\|u\|_{L^2(\Omega')}^2+\|u\|_{L^{\infty}(\R^N\setminus \Omega')}^2\Bigg).
	$$
	The proof of the first part then is finished with Proposition \ref{prop-sobolev-nikolskii} since $2\alpha>1$. Next, write $D_hp(x)=\frac{p(x+he)-p(x)}{h}$ for any function $p:\R^N\to\R$, with $e\in \partial B_1(0)$ fixed and $h\in \R\setminus\{0\}$. Then with Lemma \ref{localization} for some $\eta\in C^{\infty}_c(\Omega)$ such that $0\leq \eta\leq1$ and $\eta\equiv 1$ on $\Omega_2\subset\subset \Omega$ with $\Omega'\subset\subset \Omega_1\subset\subset \Omega_2$ we have with $v=\eta u$, 
	$$
	I_jv=f+\lambda v+g_{\eta,u}\quad\text{in $\Omega_1$, where}\quad g_{\eta,u}=\int_{\R^N\setminus\Omega_2}(1-\eta(y))u(y)j(x-y)\ dy.
	$$
	Next, let $\mu\in C^{\infty}_c(\Omega_1)$ with $0\leq \mu\leq1$ and $\mu\equiv 1$ on $\Omega'$. Then with $\phi=D_{-h}[\mu^2D_hv]\in \cD^j(\Omega_1)$ for $h$ small enough we have for some $C>0$ (which may change from line to line independently of $h$)
	\begin{align}
	\label{bdd-rhs-derivative}
	|b_j(v,\phi)|=\Bigg|\int_{\Omega_1}D_hf\mu^2D_hv+ \lambda (\mu D_hv)^2+D_hg_{\eta,u}\mu^2 D_hv\ dx\Bigg|\leq C,
	\end{align}
	since
	$$
	\int_{\Omega_1}|D_hf\mu^2D_hv|\ dx\leq C\|f\|_{C^1(\Omega)}\|\nabla u\|_{L^2(\Omega_2)}<\infty,\qquad \int_{\Omega_1}|\lambda (\mu D_hv)^2|\ dx\leq 2|\lambda|\|\nabla u\|_{L^2(\Omega_2)}^2<\infty,
	$$
	and
$$
\int_{\Omega_1}|D_hg_{\eta,u}\mu^2 D_hv|\ dx\leq C\Big(\int_{\Omega_1}\int_{\R^N\setminus \Omega_2}|(1-\eta(y))u(y)|[D_hj](x-y)|\ dy\ dx\Big)^{1/2}\|\nabla u\|_{L^2(\Omega_2)}<\infty
$$
due to assumptions on the differentiability of $j$. Moreover, with a similar calculation as in the proof of Theorem \ref{thm:differentiability-step1} we have
\begin{align*}
b_j(v,\phi)&=b_j(\mu D_hv,\mu D_hv)-\frac{1}{2}\int_{\R^N}\int_{\R^N} D_hv(x)D_hv(y)(\mu(x)-\mu(y))^2j(x-y)\ dxdy,
\end{align*}
where for some $\Omega_2\subset\subset\Omega_3\subset\subset\Omega_4\subset\subset \Omega$~ with $h$ small enough
\begin{align*}
\int_{\R^N}\int_{\R^N} |D_hv(x)D_hv(y)(\mu(x)-\mu(y))^2&j(x-y)|\ dxdy\\
&\leq C\int_{\Omega_3} \int_{\Omega_3}|D_h(\eta u)(x)D_h(\eta u)(y)||x-y|^2j(x-y)\ dxdy\\
&\leq C\int_{\Omega_3} |D_h(\eta u)(x)|^2\int_{\Omega_3} |x-y|^2j(x-y)\ dydx\\
&\leq C\|\nabla u\|_{L^2(\Omega_4)}\int_{\R^N}\min\{1,|z|^2\}j(z)\ dz<\infty.
\end{align*}
Combining this with \eqref{bdd-rhs-derivative} we find
$$
b_j(\mu D_hv,\mu D_hv)\leq C\quad\text{for all $h>0$ small enough.}
$$
Since also $\mu D_hv\in \cD^j(\Omega_2)$ for all $h>0$ small enough (see Lemma \ref{properties1}) and since $D^j(\Omega_2)$ is a Hilbert space, we conclude that $\mu\partial_ev\in \cD^j(\Omega_2)$ with
$$
b_j(\mu \partial_ev)\leq C
$$
for $h\to 0$. This finishes the proof.
\end{proof}

\begin{cor}
	\label{thm:differentiability3}
	Let $f\in C^{2m}(\overline{\Omega})$, $\lambda\in \R$, and let $u\in \cV^j_{loc}(\Omega)\cap L^{\infty}(\R^N)$ satisfy in weak sense $I_ju=\lambda u+f$ in $\Omega$. Then $u\in H^{m}(\Omega')$ for any $\Omega'\subset \subset \Omega$ and there is $C=C(N, j,\Omega,\Omega',m)>0$ such that
	\begin{equation}
		\label{bound-derivative-final3eq}
		\| u\|_{H^m(\Omega')}\leq C\Big(\|f\|_{C^{2m}(\Omega)}^2+\|u\|^2_{L^2(\Omega')}+\|u\|^2_{L^\infty(\R^N\setminus \Omega')} \Big)^{\frac12}.
	\end{equation}
	In particular, if $m=\infty$, then $u\in C^{\infty}(\Omega)$.
\end{cor}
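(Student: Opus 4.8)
The plan is to prove, by induction on $m\in\N$, the following difference-quotient estimate (call it the \emph{level-$m$ estimate}): for every $\Omega'\subset\subset\Omega$ there is $C=C(N,j,\Omega,\Omega',m,\lambda)>0$ with
$$
\sup_{e\in\partial B_1(0),\,h>0}h^{-2m\alpha}\,\|\delta_{h,e}^{2m}u\|_{L^2(\Omega')}\le C\Bigl(\|f\|_{C^{2m}(\Omega)}^2+\|u\|_{L^2(\Omega')}^2+\|u\|_{L^\infty(\R^N\setminus\Omega')}^2\Bigr)^{1/2},
$$
and then to read off \eqref{bound-derivative-final3eq}. The case $m=1$ is Corollary \ref{thm:differentiability}. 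Granting the level-$m$ estimate: since $\alpha=1-\sigma>\tfrac12$ we have $m<2m\alpha<2m$, so, exactly as in the proof of Corollary \ref{thm:differentiability} (passing to a subdomain with smooth boundary and using the higher-difference characterization of Nikol'skii spaces recalled in Section \ref{sobolev}), the estimate gives $u\in N^{2m\alpha,2}(\Omega')$, and Proposition \ref{prop-sobolev-nikolskii} yields $N^{2m\alpha,2}(\Omega')\subset H^m(\Omega')$, which is \eqref{bound-derivative-final3eq}. If $m=\infty$, that is $f\in C^\infty(\overline\Omega)$, the finite-$m$ statement holds for every $m$, so $u\in H^m_{loc}(\Omega)$ for all $m$ and hence $u\in C^\infty(\Omega)$ by Sobolev embedding.

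For the inductive step from level $m-1$ to level $m$ (with $m\ge2$), fix $\Omega'\subset\subset\Omega$ and a finite ladder of intermediate open sets $\Omega'\subset\subset\Omega_1\subset\subset\cdots\subset\subset\Omega_\ell\subset\subset\Omega$, a direction $e\in\partial B_1(0)$, and a small $h>0$. Since $u\in L^\infty(\R^N)$, the second finite difference $\delta_{h,e}^2u$ is again bounded on $\R^N$, so for a cutoff $\eta\in C^\infty_c$ that equals $1$ near $\Omega'$ and is supported in a suitable $\Omega_i$, the function $v_h:=\eta\,\delta_{h,e}^2u$ lies in $\cD^j(\Omega_i)\cap L^\infty(\R^N)$. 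Because $I_j$ is translation invariant, $\delta_{h,e}$ commutes with it, and Lemma \ref{localization} shows that $v_h$ solves in the weak sense
$$
I_jv_h=\lambda v_h+\tilde f_h\ \text{ in a slightly smaller set},\qquad \tilde f_h=\delta_{h,e}^2f+g_{\eta,\delta_{h,e}^2u},
$$
where the correction $g_{\eta,\delta_{h,e}^2u}$ only sees $j$ away from its singularity. Using $j\in W^{l,1}(\R^N\setminus B_\epsilon(0))$ for $l\le 2(m-1)$ and the bound $|\nabla j(z)|\le C_j|z|^{-1-\sigma-N}$, one checks that $\tilde f_h\in C^{2(m-1)}(\overline{\Omega_{i-1}})$ with $\|\tilde f_h\|_{C^{2(m-1)}}\le Ch^2\bigl(\|f\|_{C^{2m}(\Omega)}+\|u\|_{L^\infty(\R^N)}\bigr)$. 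Applying the level-$(m-1)$ estimate to $v_h$, and noting $\delta_{h,e}^{2(m-1)}v_h=\delta_{h,e}^{2m}u$ on $\Omega'$ for $h$ small, we obtain
$$
\|\delta_{h,e}^{2m}u\|_{L^2(\Omega')}\le Ch^{2(m-1)\alpha}\Bigl(\|\tilde f_h\|_{C^{2(m-1)}}^2+\|v_h\|_{L^2(\Omega')}^2+\|v_h\|_{L^\infty(\R^N\setminus\Omega')}^2\Bigr)^{1/2}.
$$
It remains to bound the parenthesis by $O(h^{2\alpha})$: the first term is $O(h^4)$ by the preceding display; the second satisfies $\|v_h\|_{L^2(\Omega')}\le\|\delta_{h,e}^2u\|_{L^2(\supp\eta)}=O(h^{2\alpha})$ by Corollary \ref{thm:differentiability}; and the $L^\infty$-tail of $v_h$ is absorbed through the boundedness estimate Corollary \ref{thm-bdd2} against the ladder of cutoffs, exactly as in the proof of Corollary \ref{thm:differentiability}. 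This gives $\|\delta_{h,e}^{2m}u\|_{L^2(\Omega')}\le Ch^{2m\alpha}(\cdots)^{1/2}$ and closes the induction.

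I expect the main difficulties to be bookkeeping ones rather than conceptual. First, one has to arrange the ladder $\Omega_1\subset\subset\cdots\subset\subset\Omega_\ell$ and the cutoffs so that every tail term produced by Lemma \ref{localization} and by the boundedness estimate Corollary \ref{thm-bdd2} sits on a fixed compact subset of $\Omega$ and gets reabsorbed; this is the same device that already occurs in the proof of Corollary \ref{thm:differentiability}, iterated finitely many more times. Second, one must check that $g_{\eta,\delta_{h,e}^2u}$ and all of its $x$-difference quotients up to order $2(m-1)$ are of size $O(h^2)$ --- precisely the point where the hypotheses on $j$ away from the origin ($W^{l,1}$-membership and the pointwise bound on $\nabla j$) are used, and the analogue of the estimates on $\tilde f$ carried out inside the proof of Corollary \ref{thm:differentiability}. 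Once these two points are settled, the gain of a factor $h^{2\alpha}$ per step is a verbatim repetition of the computations in Theorem \ref{thm:differentiability-step1} and Corollary \ref{thm:differentiability}.
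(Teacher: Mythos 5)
Your proposal is correct but takes a genuinely different route from the paper. The paper's proof of Corollary \ref{thm:differentiability3} is a short induction on honest derivatives: the inductive hypothesis is $u\in H^{m-1}_{loc}$ with $\partial^\beta u|_{\Omega'}\in D^j(\Omega')$ for $|\beta|\le m-1$; one then sets $v=\partial^\beta(\eta u)$ with $|\beta|=m-1$, localizes via Lemma \ref{localization}, and re-runs the $m=1$ argument (Theorem \ref{thm:differentiability-step1} plus the Nikol'skii--Sobolev embedding $N^{2\alpha,2}\subset H^1$) on $v$, so the embedding $N^{2\alpha,2}\hookrightarrow H^1$ is invoked at every level. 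You instead propagate a pure finite-difference estimate $\sup_{e,h}h^{-2m\alpha}\|\delta_{h,e}^{2m}u\|_{L^2(\Omega')}<\infty$, localizing $\delta_{h,e}^2u$ rather than $\partial^\beta u$, and appeal to the Nikol'skii--Sobolev embedding (Proposition \ref{prop-sobolev-nikolskii}) only once at the end, using $2m\alpha>m$. Your route has a small bookkeeping advantage: $v_h=\eta\,\delta_{h,e}^2u$ is automatically in $L^\infty(\R^N)$ (finite differences preserve boundedness), whereas the paper's $v=\partial^\beta(\eta u)$ is a priori only in $L^2_{loc}$ and its boundedness must be supplied by the $L^\infty$-machinery before Theorem \ref{thm:differentiability-step1} can be applied. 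Conversely, the paper's route delivers the extra information $\partial^\beta u\in D^j(\Omega')$ along the way, which the finite-difference induction does not record explicitly.

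Two technical points deserve emphasis, both of which you flag correctly as the places where real work happens. First, the bound $\|\tilde f_h\|_{C^{2(m-1)}(\overline{\Omega_{i-1}})}=O(h^2)$ for $g_{\eta,\delta_{h,e}^2 u}$ does indeed require moving the second difference from $u$ onto $(1-\eta(\cdot))j(x-\cdot)$ by discrete summation by parts and then using two $y$-derivatives of $j$ on $\{|x-y|\ge\epsilon\}$; together with the $2(m-1)$ $x$-derivatives of $j$ in the $C^{2(m-1)}$-norm this consumes exactly the assumed $W^{l,1}(\R^N\setminus B_\epsilon(0))$-regularity of $j$ for $l\le 2m$, so the hypotheses are used sharply, as you say. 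Second, for the inductive gain of $h^{2\alpha}$ you need the localized tail $\|v_h\|_{L^\infty(\R^N\setminus\Omega')}$ (equivalently $\|v_h\|_{L^\infty(\supp\eta\setminus\Omega')}$) to be $O(h^{2\alpha})$, not merely $O(1)$; this is the same delicate point that already occurs in the paper's proof of Corollary \ref{thm:differentiability} (where the term $\|v\|^2_{L^\infty(\R^N\setminus\Omega_3)}$ is absorbed between two lines of the displayed chain), and you explicitly defer to that device, so you are on the same footing as the paper here. Modulo that shared machinery, the inductive step closes as you describe and the final step --- Nikol'skii characterization via $2m$-th differences, $N^{2m\alpha,2}(\Omega')\subset H^m(\Omega')$ since $m<2m\alpha$, and Sobolev embedding for $m=\infty$ --- is routine.
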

\begin{proof}
	By Corollary \ref{thm:differentiability} the claim holds for $m=1$ in particular with $u|_{\Omega'}\in D^j(\Omega')$ for all $\Omega'\subset\subset \Omega$. Assume next, the claim holds for $m-1$ with $m\in\N$, $m\geq 2$ in the following way: We have $u\in H^{m-1}(\Omega')$ and $\partial^{\beta}u|_{\Omega'}\in D^j(\Omega')$ for any $\Omega'\subset\subset \Omega$ and $\beta\in \N_0^N$ with $|\beta|\leq m-1$, and there is $C=C(N, j,\Omega,\Omega',m)>0$ such that
	\begin{equation}
		\label{bound-derivative-final3}
		\| u\|_{H^{m-1}(\Omega')}\leq C\Big(\|f\|_{C^{2m-2}(\Omega)}^2+\|u\|_{L^2(\Omega')}+\|u\|^2_{L^\infty(\R^N\setminus \Omega')} \Big)^{\frac12}.
	\end{equation}
	Fix $\Omega'\subset\subset\Omega$ and let $\Omega_i\subset\subset\Omega$, $i=1,\ldots,7$ and $\eta\in C^{\infty}_c(\Omega_7)$ as in the proof of Corollary \ref{thm:differentiability}. Put $v=\partial^{\beta}(\eta u)$ for some $\beta\in \N_0^N$, $|\beta|=m-1$. Then $Iv=\partial^{\beta}f+\lambda v+\partial^{\beta}g_{\eta,u}$ in $\Omega_5$ by Lemma \ref{localization} and direct computation using the assumptions on $J$. From here, proceeding as in the proof of Corollary \ref{thm:differentiability} by applying Theorem \ref{thm:differentiability-step1} the claim follows.
\end{proof}

\begin{proof}[Proof of Theorem \ref{eigenfunctionsj}]
By assumption, it follows from \cite{JW20} that $\cD^j(\Omega)$ is compactly embedded into $L^2(\Omega)$. This gives the existence of the sequence of eigenfunctions and corresponding eigenvalues. The fact that the first eigenfunction can be chosen to be positive follows from the fact that $b_j(|u|)\leq b_j(u)$, Proposition \ref{wmp} and Proposition \ref{smp} (see also \cite{JW19}). Now statement (1) follows from Theorem \ref{thm-bdd-full-domain} (with $h=f=0$) and statement (2) follows directly from Corollary \ref{thm:differentiability3}.
\end{proof}

\begin{proof}[Proof of Theorem \ref{poissonj}]
The first part follows from the Poincar\'e inequality, i.e. under the assumptions it holds $\Lambda_1(\Omega)>0$, and Theorem \ref{thm-bdd-full-domain} with $h=0=\lambda$. The last assertion follows from Corollary \ref{thm:differentiability3}.
\end{proof}

\appendix

\section{An inequality}

The following is a variant of \cite[Lemma 10]{DK11} (see also \cite[Lemma 5.1]{JW20}).

\begin{lemma}
	\label{iterationlemma}
	Let $q\in L^1(\R^N)$ be a nonnegative even function with $q=0$ on $\R^N\setminus B_r(0)$ for some $r>0$. Let $\Omega\subset \R^N$ open and $x_0\in \Omega$ such that $B_{2r}(x_0)\subset \Omega$. Then for all measurable functions $u:\Omega\to \R$ we have
	\[
	b_{q\ast q,B_r(x_0)}(u)\leq 4\|q\|_{L^1(\R^N)} b_{q,\Omega}(u).
	\]
\end{lemma}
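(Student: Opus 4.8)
The plan is to expand the convolution $q\ast q$ inside the Dirichlet form, use the support condition on $q$ to keep everything inside $\Omega$, and then split the second differences by the elementary inequality $(a-c)^2\le 2(a-b)^2+2(b-c)^2$.

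First I would write, for $x,y\in B_r(x_0)$,
\[
(q\ast q)(x-y)=\int_{\R^N}q(x-z)q(z-y)\ dz,
\]
and observe that the integrand vanishes unless $|x-z|<r$ and $|z-y|<r$; since $x\in B_r(x_0)$, the first of these forces $z\in B_{2r}(x_0)\subset\Omega$. Hence the inner integral may be restricted to $z\in B_{2r}(x_0)$, and together with $B_r(x_0)\subset\Omega$ this guarantees that in the resulting triple integral all three variables $x,y,z$ range over subsets of $\Omega$. This is the only place where the hypothesis $B_{2r}(x_0)\subset\Omega$ enters.

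Next I would plug this into the definition of $b_{q\ast q,B_r(x_0)}(u)$ and apply $(u(x)-u(y))^2\le 2(u(x)-u(z))^2+2(u(z)-u(y))^2$ under the triple integral (all integrands are nonnegative, so Tonelli's theorem applies freely), obtaining
\[
b_{q\ast q,B_r(x_0)}(u)\le \int_{B_r(x_0)}\int_{B_r(x_0)}\int_{B_{2r}(x_0)}\Big[(u(x)-u(z))^2+(u(z)-u(y))^2\Big]q(x-z)q(z-y)\ dz\,dx\,dy.
\]
For the first summand I would perform the $y$-integration first, using $\int_{B_r(x_0)}q(z-y)\ dy\le \|q\|_{L^1(\R^N)}$, which leaves
\[
\|q\|_{L^1(\R^N)}\int_{B_r(x_0)}\int_{B_{2r}(x_0)}(u(x)-u(z))^2q(x-z)\ dz\,dx\le \|q\|_{L^1(\R^N)}\int_{\Omega}\int_{\Omega}(u(x)-u(z))^2q(x-z)\ dz\,dx=2\|q\|_{L^1(\R^N)}\,b_{q,\Omega}(u),
\]
where the last step uses $B_r(x_0),B_{2r}(x_0)\subset\Omega$ and the definition $b_{q,\Omega}(u)=\tfrac12\iint_{\Omega\times\Omega}(u(x)-u(y))^2q(x-y)\,dx\,dy$. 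The second summand is estimated in exactly the same way, now integrating in $x$ first; here only $q\ge 0$ is used, not its evenness. Adding the two estimates yields the bound $4\|q\|_{L^1(\R^N)}\,b_{q,\Omega}(u)$, as claimed.

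I do not expect any genuine obstacle here: the argument is soft. The only points requiring a little care are the support/inclusion bookkeeping in the first step and matching the factor $2$ from the splitting inequality against the factor $\tfrac12$ in the definition of $b_{q,\Omega}$, which is precisely what produces the constant $4$.
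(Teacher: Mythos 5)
Your proof is correct and follows essentially the same route as the paper's: expand the convolution, apply $(a-c)^2\le 2(a-b)^2+2(b-c)^2$ under the triple integral, integrate out the extra variable against $\|q\|_{L^1}$, and use the support condition together with $B_{2r}(x_0)\subset\Omega$ to land back on $b_{q,\Omega}(u)$. The only cosmetic differences are that you restrict the $z$-integral to $B_{2r}(x_0)$ from the outset (avoiding the trivial extension of $u$ that the paper invokes) and estimate the two summands separately, where the paper collapses them by the $x\leftrightarrow y$ symmetry; both versions give the same constant $4$.
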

\begin{proof}
	In the following, we identify $u$ with its trivial extension $\tilde{u}:\R^N\to\R$, $\tilde{u}(x)=u(x)$ for $x\in \Omega$ and $\tilde{u}(x)=0$ otherwise. Denote $g(x,y)=(u(x)-u(y))^2$ for $x,y\in \R^N$. Note that we have
	\[
	0\leq g(x,y)=g(y,x)\leq 2g(x,z)+2g(y,z)\qquad\text{ for all $x,y,z\in \R^N$.}
	\]
	By Fubini's theorem we have
	\begin{align*}
	\int_{B_r(x_0)}&\int_{B_r(x_0)}g(x,y) (q\ast q)(x-y)\ dxdy=	\int_{B_r(x_0)}\int_{B_r(x_0)} \int_{\R^N} g(x,y) q(x-z)q(y-z)\ dzdxdy\\
	&\leq	2\int_{B_r(x_0)}\int_{B_r(x_0)} \int_{\R^N} [g(x,z)+g(y,z)] q(x-z)q(y-z)\ dzdxdy\\
	&\leq  4\int_{B_r(x_0)}\int_{\R^N}g(x,z)q(x-z) \int_{\R^{N}}q(y-z)\ dydzdx= 4\|q\|_{L^1(\R^N)}\int_{B_r(x_0)}\int_{\R^N}g(x,z)q(x-z)\ dzdx.
	\end{align*}
	Note that since $q=0$ on $\R^N\setminus B_r(0)$, $q$ is even, and $B_r(x)\subset B_{2r}(x_0)\subset \Omega$ for any $x\in B_r(x_0)$, we have
	\begin{align*}
	\int_{B_r(x_0)}\int_{\R^N}g(x,z)q(x-z)\ dzdx&=\int_{B_r(x_0)}\int_{B_r(x)}(u(x)-u(z))^2q(x-z)\ dzdx\leq 2b_{q,\Omega}(u).
	\end{align*}
\end{proof}

\bibliographystyle{amsplain}

\begin{thebibliography}{10}

\bibitem{AB17} H. Antil and S. Bartels, \emph{Spectral approximation of fractional PDEs in image processing and phase field modeling}, Comput. Methods Appl. Math., \text{17.4} (2017), 661-678.

\bibitem{BVW21} I.~Bio\v{c}i\'{c}, Z.~Vondra\v{c}ek, and V.~Wagner, \emph{Semilinear equations for non-local operators: beyond the fractional {L}aplacian}, Nonlinear Anal. \textbf{207} (2021), Paper No. 112303, \url{https://doi.org/10.1016/j.na.2021.112303}

\bibitem{BJ20} A.~Biswas and S.~Jarohs, \emph{On overdetermined problems for a general class of nonlocal
	operators}, J. Differential Equations \textbf{268.5} (2020), 2368--2393.


\bibitem{CS03}
Z.-Q.~Chen and R.~Song, \emph{Hardy inequality for censored stable processes}, Tohoku Math. J. \textbf{55} (2003), 439--450.


\bibitem{CW18}
H.~Chen and T.~Weth, \emph{The Dirichlet problem for the Logarithmic Laplacian},  Comm. Partial Differential Equations, \textbf{44.11} (2019), 1100--1139.

\bibitem{CP17} E.~Correa and A.~de~Pablo, \emph{Nonlocal operators of order near zero}, J. Math. Anal. Appl. \textbf{461} (2018), 837--867.


\bibitem{C17}
M.~Cozzi, \emph{Interior regularity of solutions of non-local equations in Sobolev and Nikol'skii spaces},  Ann. Mat. Pura Appl. (4) \textbf{196.2} (2017), 555--578.

\bibitem{D04} B.~Dyda, \emph{A fractional order Hardy inequality}, Illinois J. Math. \textbf{48.2} (2004), 575--588.

\bibitem{DK11} B.~Dyda and M.~Kassmann, \emph{Comparability and regularity estimates for symmetric nonlocal Dirichlet forms}, preprint (2011), available online at \url{https://arxiv.org/abs/1109.6812}.

\bibitem{EG92} L.C. Evans and R.F. Gariepy, \emph{Measure Theory and Fine Properties of Functions}, CRC Press, Taylor and Francis Group, 1992.

\bibitem{FKV13} M.~Felsinger, M.~Kassmann, and P.~Voigt, \emph{The Dirichlet problem for nonlocal operators},  Math. Z. \textbf{279} (2015), 779--809.

\bibitem{F21} P. A. Feulefack, \emph{The  Logarithmic  Schr\"odinger operator and Associated Dirichlet  Problems}, J. Math. Anal. Appl., \textbf{517.2}  (2023), 126656.

\bibitem{FJW20} P. A. Feulefack, S. Jarohs, and T. Weth, \emph{Small Order Asymptotics of the Dirichlet Eigenvalue Problem for the Fractional Laplacian}. J. Fourier Anal Appl \textbf{28.2}  (2022), 1--44. 
 
 \bibitem{G14} T. Grzywny, \emph{On Harnack inequality and H\"older regularity for isotropic unimodal L\'evy processes}, Potential Anal. \textbf{41} (2014),  1--29. 

 \bibitem{GKL21} T. Grzywny, M. Kassmann, and \L.~Le\.{z}aj, \emph{Remarks on the Nonlocal Dirichlet Problem}, Potential Anal. \textbf{54} (2021), 119--151, \url{https://doi.org/10.1007/s11118-019-09820-9}. 

\bibitem{GK18} T. Grzywny and M. Kwa\'snicki, \emph{Potential kernels, probabilities of hitting a ball, harmonic functions and the boundary Harnack inequality for unimodal L\'evy processes}, Stochastic Process. Appl. \textbf{128} (2018),   1--38. 

\bibitem{HS2121}V. Hern\'andez Santamar\'ia and A.~ Salda\~na, \emph{Small order asymptotics for nonlinear fractional problems}, Calc. Var., \textbf{61.3} (2022), 1--26.


\bibitem{JSW20} S. Jarohs, A. Salda{$\tilde{\textnormal{n}}$}a, and T. Weth, \emph{A new look at the fractional Poisson problem via the Logarithmic Laplacian}, J. Funct. Anal. 279(11), (2020), 108732.

 
\bibitem{JW19}
S.~Jarohs and T.~Weth, \emph{On the strong maximum principle for nonlocal operators}, Math. Z. \textbf{293.1-2} (2019), 81--111.

\bibitem{JW20}
S.~Jarohs and T.~Weth, \emph{Local compactness and nonvanishing for weakly singular nonlocal quadratic forms},  Nonlinear Anal. \textbf{193} (2020), 111431.



\bibitem{KM17} M.~Kassmann and A.~Mimica, \emph{Intrinsic scaling properties for nonlocal operators}, J. Eur. Math. Soc. \textbf{19.4} (2017), 983--1011.

\bibitem{KM14} P. Kim and  A. Mimica, \emph{Green function estimates for subordinate Brownian motions: stable and beyond}, Trans. Amer. Math. Soc. 366 (2014), no. 8, 4383--4422. 

\bibitem{M14} A.  Mimica,  \emph{ On harmonic functions of symmetric L\'evy processes.} Ann. Inst. Henri Poincar Probab. Stat. \textbf{50} (2014),  214--235. 

\bibitem{PV18} B.~Pellacci and G.~Verzini, \emph{ Best dispersal strategies in spatially heterogeneous environments: optimization of the principal eigenvalue for indefinite fractional Neumann problems.} J. Math. Biol. \text{76}, (2018), 1357--1386.

\bibitem{TW21} R.~Y.~Temgoua and T.~Weth, \emph{The eigenvalue problem for the regional fractional Laplacian in the small order limit}, preprint (2021), available online at \url{https://arxiv.org/abs/2112.08856}.

\bibitem{T95} H.~Triebel, \emph{Interpolation Theory, Function Spaces, Differential Operators}. Johann Ambrosius Barth, Heidelberg (1995). 

\end{thebibliography}

\end{document}